\newcommand{\thh}{\operatorname{THH}}
\newcommand{\Set}{\operatorname{Set}}
\newcommand{\TR}{\operatorname{TR}}
\newcommand{\clevertheorem}[3]{%
  \newtheorem{#1}[thm]{#2}
  \crefname{#1}{#2}{#3}
}
\numberwithin{equation}{section} 
\numberwithin{figure}{section} 
\theoremstyle{plain} 
\newtheorem{thm}{Theorem}[section]
\crefname{thm}{Theorem}{Theorems}
\newtheorem*{thm*}{Theorem}
\newtheorem*{prop*}{Proposition}
\theoremstyle{definition} 
\theoremstyle{remark} 
\let\c@equation\c@thm\makeatother
\let\c@figure\c@thm\makeatother
\crefname{figure}{Figure}{Figures}
\crefname{equation}{}{Displays} 
\crefname{eq}{}{Displays}
\crefname{eqn}{}{Displays}
\newcommand{\mbf}{\mathbf}
\newcommand{\id}{\operatorname{id}}
\newcommand{\End}{\operatorname{End}}
\newcommand{\tr}{\operatorname{Tr}}
\newcommand{\Tr}{\operatorname{Tr}}
\newcommand{\ch}{\operatorname{ch}}
\newcommand{\THH}{\operatorname{THH}}
\DeclareMathOperator{\holim}{holim} 
\title{Facets of the Witt Vectors}
\author{Jonathan A. Campbell}
\date{}
\begin{document}
\maketitle
\begin{abstract}
We give a $K$-theoretic account of the basic properties of Witt vectors. Along the way we re-prove basic properties of the little-known Witt vector norm,  give a characterization of Witt vectors in terms of algebraic $K$-theory, and a presentation of the Witt vectors that is useful for computation. 
\end{abstract}
\section{Introduction} 

The ring of Witt vectors is an object that rears its head frequently in algebra, number theory, algebraic geometry, and homotopy theory.  For all this, it seems to cause consternation because of its opaque definition and the number of equally opaque identities that accompany it. Our aim is to demystify the definition, explain its ubiquity and provide devices for working with the identities.

Let $A$ be a ring. We take as a starting point Almkvist's beautiful theorem that the group completion of the category of endomorphisms of finitely generated projective $A$-modules, $\widetilde{K}_0 (\End(A))$, is equivalent to the \textit{rational} big Witt vectors, i.e. the Witt vectors that appear as fractions of polynomials in $(1+A[[t]])^\times$ \cite{almkvist1}. Almkvist's theorem tells us that the Witt vectors are nothing mysterious: they are objects that capture the compatibilites between iterated traces of endomorphisms. In the instances where this does not literally work, the Witt vectors can be viewed as formal devices that \textit{force} this to work. Indeed, the (big) Witt vectors essentially want to be $\widetilde{K}_0 (\End(A))$, but the input category for $K$-theory has to be \textit{finite} in some way. If we allowed ourselves infinite dimensional endomorphisms, we would be able to capture all characteristc polynomials and thus all, not just rational, Witt vectors. However, this is proscribed by the construction of K-theory: an Eilenberg Swindle will not allow for infinite dimensional data. We repair this by a simple completion procedure, inspired by \cite{DS1}.

\begin{thm}\label{thm:theorem_1}
There is a filtration $V^\bullet$ on $\widetilde{K}_0 (\End(A))$ such that completing $\widetilde{K}_0 (\End(A))$ with respect to $V^\bullet$ is $W(A)$. 
\end{thm}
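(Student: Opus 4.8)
The plan is to let the Almkvist isomorphism carry all of the structural content, and then to observe that the only reason $\widetilde{K}_0(\End(A))$ fails to be all of $W(A)$ — namely, that it sees only the \emph{rational} characteristic power series — evaporates once one completes along a degree filtration. Concretely, fix the model $W(A) = (1 + tA[[t]])^{\times}$ for the big Witt vectors (multiplication of power series as the group law, Witt multiplication as the ring law), and recall that Almkvist's theorem \cite{almkvist1} exhibits
\[
  \chi\colon \widetilde{K}_0(\End(A)) \longrightarrow W(A),\qquad [(P,f)]\longmapsto \det(\id - tf \mid P)
\]
(the reverse characteristic power series over $A$; up to convention one may prefer its inverse) as an injective homomorphism with image the subgroup $W_{\mathrm{rat}}(A)$ of rational Witt vectors. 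Put $\mathrm{Fil}^{n}W(A) = 1 + t^{n}A[[t]]$. This is a descending chain of subgroups — in fact of ideals, since $W(A)/\mathrm{Fil}^{n+1}W(A)\cong(1 + tA[t]/(t^{n+1}))^{\times}$ is the ring of length-$n$ truncated big Witt vectors — with $\bigcap_{n}\mathrm{Fil}^{n}W(A) = 1$, and $W(A)$ is complete for it: the canonical map $W(A)\to\varprojlim_{n}W(A)/\mathrm{Fil}^{n}W(A)$ is an isomorphism, this being just the fact that a power series is determined by, and may be prescribed freely through, a compatible system of truncations. Now define $V^{n} := \chi^{-1}(\mathrm{Fil}^{n}W(A))\subseteq\widetilde{K}_0(\End(A))$, the subgroup of classes whose characteristic power series is $\equiv 1\pmod{t^{n}}$; this is automatically a descending filtration by subgroups, with $V^{1} = \widetilde{K}_0(\End(A))$ and $\bigcap_{n}V^{n} = 0$ because $\chi$ is injective.

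The key step is to compute the subquotients. The image of $\chi$ is a subgroup of $W(A)$ which, for every monic $s^{m} + a_{1}s^{m-1}+\cdots+a_{m}\in A[s]$, contains $\det(\id - tC \mid A^{m}) = 1 + a_{1}t + \cdots + a_{m}t^{m}$, where $C\in M_{m}(A)$ is the companion matrix (an elementary linear-algebra identity — and with the opposite convention for $\chi$ one obtains the inverse series, which lies in the same subgroup); hence $\mathrm{im}(\chi)$ contains the whole group $1 + tA[t]$ of polynomial Witt vectors. Since the reduction $1 + tA[t]\to(1 + tA[t]/(t^{n}))^{\times} = W(A)/\mathrm{Fil}^{n}W(A)$ is surjective, so is the composite $\widetilde{K}_0(\End(A))\xrightarrow{\ \chi\ }W(A)\to W(A)/\mathrm{Fil}^{n}W(A)$, and therefore it induces an isomorphism $\widetilde{K}_0(\End(A))/V^{n}\xrightarrow{\ \sim\ }W(A)/\mathrm{Fil}^{n}W(A)$ for every $n$.

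Passing to the limit, completing $\widetilde{K}_0(\End(A))$ along $V^{\bullet}$ yields
\[
  \varprojlim_{n}\widetilde{K}_0(\End(A))/V^{n}\;\cong\;\varprojlim_{n}W(A)/\mathrm{Fil}^{n}W(A)\;\cong\;W(A),
\]
the first isomorphism by the previous step and the second by completeness of $W(A)$; tracing through, the completion map is exactly the Almkvist inclusion $W_{\mathrm{rat}}(A)\hookrightarrow W(A)$, so that $\widetilde{K}_0(\End(A))$ embeds in its completion as the dense subgroup of rational Witt vectors, and — the $V^{n}$ being ideals — this is an isomorphism of rings. I do not expect a genuine obstacle: the structural content is Almkvist's theorem, which we may quote, and the surjectivity above is elementary. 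What requires care is only bookkeeping — that $\chi$ is well defined and additive on $\widetilde{K}_0(\End(A))$ (multiplicativity of the determinant under direct sums and short exact sequences of endomorphisms, and vanishing on zero endomorphisms), and fixing the indexing convention relating $\mathrm{Fil}^{\bullet}$ to the length of truncated Witt vectors. If one wanted instead a manifestly $K$-theoretic description of $V^{\bullet}$ — for instance via truncation endofunctors of $\End(A)$, or as generated by endomorphisms supported in ``$t^{n}$-nilpotent'' directions — then the one nontrivial task would be to match that description with the degree filtration on power series, and I would route that comparison through $\chi$ as well.
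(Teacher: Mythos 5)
Your proposal is correct and follows essentially the same route as the paper: pull back the degree filtration on $(1+A[[t]])^\times$ along the characteristic-polynomial (Almkvist) isomorphism to define $V^\bullet$, identify the truncated quotients with truncated Witt vectors, and pass to the limit. The only cosmetic difference is that you establish surjectivity onto each truncation directly via companion matrices, where the paper packages the same fact as a lemma (that rational and arbitrary power series agree modulo $t^n$) and routes the comparison through the generalized Teichm\"uller map.
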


This statement has many applications. First, the Witt vectors do not have many pleasant universal properties, but this characterization points the way: for example, it should be an initial $V$-complete object in a suitable category. 

Further, although \cref{thm:theorem_1} is a kind of universal statement, it still allows us to extract a very concrete presentation of the Witt vectors, suitable for computation.

  \begin{thm} (see \cref{thm:presentation})
    For a commutative ring $A$, the ring of big Witt vectors $W(A)$, is isomorphic to formal sums of symbols $(a)_i$, $a \in A, i \in \mathbb{N}$, with multiplication given by the formula 
\[
   (a)_i  (b)_j = \operatorname{gcd} (i, j)(a^{j/\gcd(i,j)}  b^{i/\gcd(i,j)})_{\operatorname{lcm}(i,j)}
\]
  \end{thm}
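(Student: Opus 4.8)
The plan is to combine \cref{thm:theorem_1} with Almkvist's theorem to identify $W(A)$, as a set with its additive structure, with $(1+tA[[t]])^\times$ (addition $=$ multiplication of power series), and then to pin down the multiplicative structure by a direct computation on generators. Write $(a)_i$ for the class of $1-at^i$. The first ingredient is the classical normal form: every $f\in 1+tA[[t]]$ has a \emph{unique} expression $f=\prod_{i\ge 1}(1-a_it^i)$ with $a_i\in A$, obtained by solving for $a_1,a_2,\dots$ one coefficient at a time. This is precisely the statement that $W(A)$ is carried bijectively onto formal sums $\sum_i (a_i)_i$; the additive law is transported along this bijection, and the completion of \cref{thm:theorem_1} accounts for the infinite sums, so that all of $W(A)$—not merely the rational part $\widetilde{K}_0(\End(A))$—arises this way. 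Under Almkvist's identification the symbol $(a)_i$ is represented by the pair $\bigl(A[x]/(x^i-a),\ \cdot x\bigr)$ (a twisted cyclic endomorphism of $A^i$, whose reverse characteristic polynomial is $1-at^i$), and the product of two symbols is the class of the tensor product of the corresponding pairs.

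It then remains to evaluate $(a)_i\cdot(b)_j$. By functoriality of $W(-)$ it suffices to do this in the universal case $A=\mathbb{Z}[a,b]$ and then base change along the ring map sending the indeterminates to the desired elements. Since $\mathbb{Z}[a,b]$ is torsion free, the ghost homomorphism $w\colon W(\mathbb{Z}[a,b])\to\prod_{n\ge 1}\mathbb{Z}[a,b]$ into the product ring is injective, so it is enough to match ghost components. One computes $w_n(1-ct^m)=m\,c^{n/m}$ when $m\mid n$ and $0$ otherwise; hence $w_n\bigl((a)_i\cdot(b)_j\bigr)=w_n(1-at^i)\,w_n(1-bt^j)$ equals $ij\,a^{n/i}b^{n/j}$ when $\operatorname{lcm}(i,j)\mid n$ and $0$ otherwise. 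Writing $d=\operatorname{gcd}(i,j)$, $\ell=\operatorname{lcm}(i,j)$, $c=a^{j/d}b^{i/d}$, and using $d\ell=ij$, $\ell/i=j/d$, $\ell/j=i/d$, one gets $w_n\bigl(d\cdot(c)_\ell\bigr)=d\cdot w_n(1-ct^\ell)$, which is $d\ell\,c^{n/\ell}=ij\,a^{n/i}b^{n/j}$ for $\ell\mid n$ and $0$ otherwise. The two sides have equal ghost components, so $(a)_i\cdot(b)_j=d\cdot(c)_\ell$, which is the displayed formula; base change propagates this to every commutative ring $A$, and extending bilinearly—re-collecting terms via the normal form of the previous paragraph—gives the ring structure on formal sums.

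I expect two points to require the most care. The first is expository: making precise what ``formal sums of symbols $(a)_i$'' means and checking that the transported addition is the intended one—this is really just the normal-form statement, but it must be phrased so that it meshes cleanly with the $V^\bullet$-completion of \cref{thm:theorem_1}. The second is the substantive alternative to the ghost-coordinate shortcut: if one insists on a proof that stays inside the $K$-theoretic picture, the computation of the previous paragraph should be replaced by an explicit $A$-linear isomorphism $\bigl(A[x]/(x^i-a)\bigr)\otimes_A\bigl(A[y]/(y^j-b)\bigr)\ \cong\ \bigoplus^{\operatorname{gcd}(i,j)} A[z]/(z^\ell-c)$ intertwining multiplication by $xy$ with multiplication by $z$. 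Over a field with $a,b$ in general position this reflects the elementary fact that $\mu_i\cdot\mu_j=\mu_{\operatorname{lcm}(i,j)}$ with every fibre of size $\operatorname{gcd}(i,j)$, but writing the isomorphism down over $\mathbb{Z}[a,b]$—so that it descends to arbitrary $A$—takes a little bookkeeping; the rest is routine.
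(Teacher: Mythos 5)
Your proposal is mathematically correct but takes a genuinely different route from the paper. You identify $W(A)$ with $(1+tA[[t]])^\times$ via the unique normal form $f=\prod_i(1-a_it^i)$, pass to the universal torsion-free ring $\mathbb{Z}[a,b]$, and pin down the product $(a)_i\cdot(b)_j$ by matching ghost components (using injectivity of the ghost map when there is no torsion), then base change. This is the classical strategy, and your ghost-component bookkeeping is correct: $w_n((a)_i\cdot(b)_j)=ij\,a^{n/i}b^{n/j}$ when $\operatorname{lcm}(i,j)\mid n$, and the identities $d\ell=ij$, $c^{n/\ell}=a^{n/i}b^{n/j}$ give the match. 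The paper, however, deliberately refuses this shortcut --- the sentence immediately preceding \cref{thm:presentation} reads ``Finally, we get the following, without recourse to ghost maps.'' Its proof instead rests on \cref{prop:necessary_relations}, where the two identities
\[
V^1(a)\,V^i(b)=V^i\bigl(F^i(V^1(a))\,V^1(b)\bigr),\qquad F^iV^j(a)=\gcd(i,j)\,V^{i/\gcd(i,j)}F^{j/\gcd(i,j)}(a)
\]
are verified by writing down the associated companion-type matrices and comparing their characteristic polynomials directly (determinants of cyclic matrices), with no reduction to a universal ring. The multiplication formula is then extracted from these two relations by a short formal manipulation of $V$'s and $F$'s. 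What each buys: your argument is faster and leans on familiar machinery, but it reintroduces exactly the torsion/density workaround the paper is trying to eliminate; the paper's argument is slightly longer but stays entirely inside the determinant formalism, supporting its thesis that ghost coordinates are a computational device rather than structurally necessary. The ``alternative that stays inside the $K$-theoretic picture'' you sketch at the end --- an explicit $A$-module isomorphism $(A[x]/(x^i-a))\otimes_A(A[y]/(y^j-b))\cong\bigoplus^{\gcd(i,j)}A[z]/(z^{\operatorname{lcm}(i,j)}-c)$ --- is a third route; the paper does not carry it out either, preferring to equate characteristic polynomials rather than exhibit the isomorphism of endomorphisms.

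One smaller point worth tightening in your write-up: the statement that $W(A)$ ``is formal sums of symbols $(a)_i$'' should be read as ``every element has a unique expression as a formal (possibly infinite, $V$-adically convergent) sum $\sum_i (a_i)_i$,'' not as the free abelian group on the symbols; finite $\mathbb{Z}$-linear combinations such as $(a)_1+(b)_1$ must be re-collected into Teichm\"uller normal form before comparison. You gesture at this (``re-collecting terms via the normal form''), but since the paper devotes the rest of the section to exactly this re-collection procedure, the distinction between a formal sum and its normal form is load-bearing and deserves to be made explicit.
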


  \cref{thm:theorem_1} also us us how to construct operations on the Witt vectors. Any functorial construction on $\End(A)$ that is continuous in the $V$-topology descends to an operation on Witt vectors. This allows for a conceptual, categorical proof of the following theorem. 

\begin{thm}\cite{angeltveit}
There is a a multiplicative norm on the (big) Witt vectors $W(A)$ induced by a norm functor $N^{i}$ on $\End(A)$ (see \cref{sec:main_section} for definitions). 
\end{thm}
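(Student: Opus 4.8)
The strategy is to obtain $N^{i}$ via the mechanism indicated right after \cref{thm:theorem_1}: produce a functorial, $V$-continuous construction on $\End(A)$ and let it descend. The steps are: (1) record the norm functor $N^{i}\colon\End(A)\to\End(A)$ of \cref{sec:main_section} together with two structural properties, strong monoidality for $\otimes_{A}$ and an ``exponential'' decomposition of $N^{i}(M\oplus M')$; (2) use that decomposition to see that, although $N^{i}$ is not additive, it induces a (non-additive) self-map of $\widetilde{K}_{0}(\End(A))$, exactly as the $\lambda$-operations do; (3) check this self-map is uniformly continuous for $V^{\bullet}$; (4) invoke \cref{thm:theorem_1} to extend it over the completion as $N^{i}\colon W(A)\to W(A)$; (5) transport multiplicativity across the completion and, via the presentation of \cref{thm:presentation}, identify the operation with the norm of \cite{angeltveit}.

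On objects $N^{i}$ sends $(P,f)$ to the tensor power $P^{\otimes_{A}i}$ equipped with the cyclic-norm endomorphism $g_{f}$, defined by $g_{f}(v_{1}\otimes\cdots\otimes v_{i})=f(v_{i})\otimes v_{1}\otimes\cdots\otimes v_{i-1}$ so that $g_{f}^{\,i}=f^{\otimes i}$; since $P^{\otimes i}$ is again finitely generated projective this is a functor, and a short trace computation gives
\[
  \operatorname{Tr}\bigl(g_{f}^{\,n}\bigr)=\operatorname{Tr}\bigl(f^{\,n/\gcd(i,n)}\bigr)^{\gcd(i,n)} .
\]
The shuffle isomorphism $(P\otimes_{A}Q)^{\otimes i}\cong P^{\otimes i}\otimes_{A}Q^{\otimes i}$ intertwines $g_{f\otimes g}$ with $g_{f}\otimes g_{g}$, so $N^{i}$ is strong monoidal for $(\otimes_{A},A)$. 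Breaking $(M\oplus M')^{\otimes i}$ into the $C_{i}$-orbits of words in the letters $M$ and $M'$ (necklaces) and identifying the endomorphism that $g_{f\oplus f'}$ induces on each orbit summand as a Verschiebung $V_{p}$ --- with $p$ the orbit size, hence $p\geq 2$ unless the word is constant --- applied to a shorter tensor-norm of the associated $\otimes$-monomial in $f$ and $f'$ yields a natural isomorphism presenting $N^{i}(M\oplus M')$ as a finite direct sum of such terms, the two constant words contributing $N^{i}(M)$ and $N^{i}(M')$. This is the exponential formula.

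The exponential formula says precisely that $[M]\mapsto[N^{i}M]$ is compatible with the additive relations of $K_{0}$ once the operations $N^{j}$ $(j<i)$ and $V_{k}$ are also present --- the usual reason polynomial functors descend to $K$-theory --- so $N^{i}$ is a well-defined operation on $\widetilde{K}_{0}(\End(A))$, which by Almkvist's theorem is $W_{\mathrm{rat}}(A)$. Continuity then follows from the two inputs above: by the trace formula the degree-$k$ ghost of $N^{i}(x)$ is built only from the degree-$(k/\gcd(i,k))$ ghost of $x$, whence $N^{i}(V^{n})\subseteq V^{n}$; and every non-corner term of the exponential formula is a $V_{p}$ with $p\geq 2$ of something, hence lies in a higher layer, which forces $N^{i}(x)\equiv N^{i}(y)\pmod{V^{n}}$ whenever $x\equiv y\pmod{V^{n}}$. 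Thus $N^{i}$ is uniformly $V$-continuous, and \cref{thm:theorem_1} extends it uniquely to $N^{i}\colon W(A)\to W(A)$.

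Finally, effective classes $[P,f]=\det(1-tf)$ are $+_{W}$-dense in $W_{\mathrm{rat}}(A)$, which is in turn dense in $W(A)$; on effective classes the identities $N^{i}(x\ast y)=N^{i}(x)\ast N^{i}(y)$ and $N^{i}(1)=1$ are immediate from strong monoidality of the functor, hence hold throughout $W(A)$ by continuity of $N^{i}$ and of $\ast$. To pin the operation down and match it with \cite{angeltveit}, one evaluates on the symbols $(a)_{j}$ of \cref{thm:presentation}: multiplicativity together with the exponential formula reduces everything to $N^{i}\bigl((a)_{1}\bigr)$, which the trace formula computes outright. I expect the real obstacle to be step (1)'s exponential decomposition inside $\End(A)$ itself: honestly identifying the endomorphism on each necklace summand as a specific $V_{p}$ of a tensor-norm of a monomial in $f$ and $f'$ takes careful bookkeeping with the $C_{i}$-action, and it is this isomorphism --- rather than the subsequent, comparatively soft, continuity estimates --- that both makes $N^{i}$ descend to $K_{0}$ and powers the proof.
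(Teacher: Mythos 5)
Your proposal follows the same high-level route as the paper: define the cyclic norm functor $N^{i}$ on $\End(A)$, observe it is strong monoidal for $\otimes$, record the trace formula $\tr(g_{f}^{\,n})=\tr(f^{\,n/\gcd(i,n)})^{\gcd(i,n)}$ to match Angeltveit's ghost-coordinate description, and descend to $W(A)$ via the $V$-filtration and $V$-completion. The real difference is one of completeness, not strategy. The paper's argument for this theorem is essentially a two-line corollary (``there is a multiplicative norm map on $\overline{W}(A)$ given by the multiplicative Verschiebung''), hanging on the general proposition that $V$-continuous endofunctors descend to the completion; but the fact that a non-additive functor such as $N^{i}$ even induces a well-defined self-map of $\widetilde{K}_{0}(\End(A))$ is only asserted in a remark (``it does not descend to a group homomorphism, but it does extend to a multiplicative map'') and is never argued, nor is its $V$-continuity ever checked. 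Your step (1) --- the $C_{i}$-orbit (necklace) decomposition of $N^{i}(M\oplus M')$ into $V_{p}$'s of tensor-norms of monomials in $f,f'$, with only the constant words contributing the corner terms $N^{i}(M)$, $N^{i}(M')$ --- is exactly the exponential formula that simultaneously justifies well-definedness on $K_{0}$ and feeds the uniform $V$-continuity estimate, and you are right to flag it as the genuine technical content: it is what the paper's short proof is implicitly relying on. In short, same approach, but your version supplies the one missing lemma; the paper buys brevity at the cost of leaving that lemma to the reader.
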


This multiplicative norm is exactly the multiplicative analogue of the standard Verschiebung map.

In order to introduce this circle of ideas, we start with the characteristic polynomial. Let $V_1, V_2$ be vector spaces and $T_1, T_2$ be endomorphisms, and define the characteristic polynomial to be $\ch(T_i) = \det(\id - t T_i) \in 1 + A[t]$. The category of vector spaces is endowed with the binary operation ``$\oplus$'', and the characteristic polynomial behaves well with respect to this operation:
$\ch(T_1 \oplus T_2) = \ch(T_1) \ch(T_2)$  in $(1 + A[t])^\times$.  The characteristic polynomial then descends to a group homomorphism
\begin{equation}\label{eqn:main_map}
\begin{tikzcd}
\widetilde{K}_0 (\End(A)) \ar[r] & (1+A[[t]])^\times.
\end{tikzcd}
\end{equation}

The category of endomorphisms is endowed with another binary operation, the tensor product, $\otimes$. This turns $\widetilde{K}_0 (\End(A))$ into a ring. But we have already used up the natural multiplication on $(1+A[[t]])^\times$. How should we make \cref{eqn:main_map} a ring map?  The punchline is:
  \[
  \boxed{
    \begin{matrix}
      \textbf{Witt multiplication is exactly the operation}\\
      \textbf{that makes \cref{eqn:main_map} a ring map}.
      \end{matrix}
  }
  \]

This is due to Alkmvist \cite{almkvist1}. Witt multiplication is a device forced on us by linear algebra, and can similarly be extracted via linear algebra.

However, use of the characteristic polynomial is not necessarily the best way to prove relations in the Witt vectors. As we recall below, the characteristic polynomial is a repackaging of iterated traces. These turn out to produce the usual ghost maps. Furthermore, iterated traces exist in contexts where characteristic polynomials do not (for example, in the category of spectra).  A useful slogan is the following:
  \[
  \boxed{
    \begin{matrix}
      \textbf{All operations on Witt vectors are controlled by}\\
      \textbf{the combinatorics of iterated traces}
    \end{matrix}
    }
  \]

  This is still somewhat inadequate for a few reasons. First, as we note below, the packaging in terms of iterated traces has an issue when the underlying ring has torsion. Second, some Witt vectors cannot be captured by endomorphisms of finitely generated projective modules. Both problems can be repaired simultaneously by considering the Witt vectors as a completion of $\widetilde{K}_0 (\End(A))$:
  \[
  \boxed{
    \textbf{The Witt vectors are a completion of $\widetilde{K}_0 (\End(A))$}
  }
  \]

  This is the most satisfying explanation, encompassing all avatars of the Witt vectors, but as universal constructions usually are, it is the least useful for computations. However, the perspective taken leads to a presentation of Witt vectors which appears to be extremely useful for computation. This also leads to the view that:
\[
\boxed{
  \begin{matrix}
    \textbf{the Witt coordinates are a normal form}\\\textbf{for completed virtual endomorphisms}
  \end{matrix}
  }
\]
 in the same way that the rational canonical form is for matrices.

  When one takes the viewpoint that endomorphisms are the central object to be working with, the presence of the following structures also become more apparent.

\begin{itemize}
\item \textbf{Truncation sets} These arise from the combinatorics of iterated traces. If $\tr(f^i)$ is non-zero, then frequently, so is $\tr(f^{id})$.
\item \textbf{Symmetric polynomials} If one accepts traces of endomorphisms as the central object, then one way of studying them is via eigenvalues. Sums of iterated traces are then $\sum \lambda^n_i$, which can be profitably expressed in terms of symmetric polynomials. 
\item \textbf{$\lambda$-rings} The category of modules, and the category of modules of endomorphisms both have $\lambda$-operations. The $\lambda$-operations are continuous in the $V$-topology and so descend to operations on the Witt vectors. 
\item \textbf{The ring $(1+A[[t]])^\times$}: This is the natural target of the characteristic polynomial. 
\end{itemize}

Finally, if one is of a homotopy theoretic bent, the structure one witnesses as coming from $\widetilde{K}_0 (\End(A))$ should be behaving similarly for higher algebraic K-theory. Some conjectures suggest themselves, which we discuss more fully in \cref{sec:conjectures}. 

\begin{conj}
There is a filtration $V^\bullet$ on the algebraic $K$-theory spectrum $\widetilde{K}(\End(A))$ such that the first associated graded component is $\THH(A)$. The completion with respect to the filtration is $\TR(A)$. 
\end{conj}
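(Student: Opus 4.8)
The plan is to categorify \cref{thm:theorem_1}: lift the Verschiebung filtration on the rational big Witt vectors to a filtration of the category $\End(A)$, and hence of the spectrum $\widetilde{K}(\End(A))$, and then show that the completion is $\TR(A)$ with $\THH(A)$ appearing as the bottom layer. First I would pin down the filtration. Write an object of $\End(A)$ as a finitely generated projective $A$-module $P$ together with the $A[t]$-action given by an endomorphism $f$; there are the Verschiebung/transfer endofunctors $V_n$ (restriction of scalars along $t\mapsto t^{n}$) and the characteristic polynomial $\det(\id-tf)\in(1+tA[[t]])^{\times}$, whose coefficients repackage the iterated traces $\tr(f^{k})$. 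I would define the $n$-truncated stage $\widetilde{K}(\End(A))/V^{\geq n}$ as the $K$-theory of the quotient of $\End(A)$ that remembers only the ghost coordinates of $\det(\id-tf)$ in degrees $<n$; that is, localize $\End(A)$ at the subcategory of those $(P,f)$ with $\det(\id-tf)\equiv 1\bmod t^{n}$ (this subcategory needs a mild enlargement, since the naive full subcategory of such pairs is not closed under subobjects). By \cref{thm:theorem_1} and Almkvist's theorem this recovers the filtration $V^\bullet$ on $\pi_0=\widetilde{K}_0(\End(A))$, so the completion $\widetilde{K}(\End(A))^{\wedge}_V:=\holim_n\widetilde{K}(\End(A))/V^{\geq n}$ has $\pi_0=W(A)$.

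Next I would identify the layers. The bottom one, $\operatorname{gr}^1$, should be $\THH(A)$: an endomorphism seen only through its trace is exactly its image under the Dennis trace $\widetilde{K}(\End(A))\to\THH(\End(A))$ followed by the ``evaluate the trace'' collapse $\THH(\End(A))\to\THH(A)$, and the claim is that this induces $\operatorname{gr}^1\simeq\THH(A)$. The input is a stable $K$-theory computation in the spirit of Dundas--McCarthy and Lindenstrauss--McCarthy: the first Goodwillie layer --- equivalently the stable $K$-theory --- of the endomorphism construction is $\THH$, the endomorphism direction contributing exactly one weight. This is the spectrum-level refinement of $\pi_0\,\operatorname{gr}^1 W(A)\cong A\cong\pi_0\THH(A)$. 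More generally, $\widetilde{K}(\End(A))/V^{\geq n+1}$ should be the finite-truncation-set $\TR^{\{1,\dots,n\}}(A)$, equipped with its restriction/Verschiebung tower, so that $\THH(A)=\TR^{\{1\}}(A)$ is the bottom stage --- which is the assertion about the first associated graded component.

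The central task is to identify the completion with $\TR(A)$. Here I would compare the stage $\widetilde{K}(\End(A))/V^{\geq n+1}$ with the relative $K$-theory $K(A[t]/t^{n+1},(t))$, via a d\'evissage presenting the relevant quotient of $\End(A)$ as a category of $A[t]$-modules supported to order $n+1$ at the origin, and then invoke the identification of curves on $K$-theory with topological restriction homology, $\mathrm{fib}\big(K(A[[t]])\to K(A)\big)\simeq\TR(A)$ (Betley--Schlichtkrull), filtered by the $t$-adic filtration whose $n$-th stage is $K(A[t]/t^{n+1},(t))$ and whose homotopy limit is the left-hand side. Passing from $\End(A)$ to its $V$-completion is precisely the passage from the polynomial to the power-series regime, so assembling these equivalences compatibly in $n$ --- matching the $V$-filtration and the resulting restriction towers on both sides --- would give $\widetilde{K}(\End(A))^{\wedge}_V\simeq\TR(A)$, compatibly with the bottom-layer identification above.

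The hard part is this last assembly. Upgrading \cref{thm:theorem_1} from $\pi_0$ to the spectrum level forces one to produce genuine --- or at least pro- --- cyclotomic structure on $\widetilde{K}(\End(A))$ and on its $V$-associated graded, to verify that the layers are $\THH(A)$ with the correct residual $C_n$-actions and not merely with the right homotopy groups, and to control the $\lim^1$-terms so that the completion converges to $\TR(A)$ on the nose. Already identifying $\operatorname{gr}^1$ with $\THH(A)$ as a spectrum uses the full strength of the stable-$K$-theory-equals-$\THH$ theorem in the endomorphism setting; and the naturality and multiplicativity needed to splice the finite stages into the restriction tower defining $\TR(A)$, compatibly with the $\lambda$-operations and the Frobenius that the iterated-trace slogan demands, is where the real difficulty lies. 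A clean proof would most likely realize $\widetilde{K}(\End(A))$ from the outset as the $K$-theory of a category of genuine $S^1$-equivariant (cyclotomic) objects, so that the cyclotomic trace and the $\TR$-structure are visible before any completion.
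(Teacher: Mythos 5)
This statement is a \emph{conjecture}, and the paper never proves it: the introduction says the author will ``return to it in work-in-progress,'' and \cref{sec:conjectures} opens by warning that ``nothing is proved in this section.'' So there is no proof in the paper to compare against; the most I can do is compare your plan with the paper's sketch in \cref{sec:conjectures}.

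Your plan agrees with the paper's in its skeleton: categorify the $V$-filtration by forming $V^{\geq k}\End(A)$ (the thick subcategory generated by images of $V^j$, $j\geq k$, equivalently the endomorphisms with $\det(\id-tf)\equiv 1\bmod t^k$), observe this is a Waldhausen category, take $K$-theory to get a filtered spectrum, form the tower of cofibers, and define the $V$-completion as $\holim_j\widetilde{K}(\End(A))/V^j$; the conjecture is then that the completion is $\TR(A)$ and the cofiber $\widetilde{K}(\End(A))/V^2$ is $\THH(A)$, with the Dennis trace appearing as the truncation map. You go further than the paper in proposing concrete ingredients for the convergence step: a d\'evissage comparing the truncated stages with relative $K$-theory $K(A[t]/t^{n+1},(t))$, the Betley--Schlichtkrull/``curves on $K$-theory'' identification $\operatorname{fib}(K(A[[t]])\to K(A))\simeq\TR(A)$, and the stable-$K$-theory-equals-$\THH$ input for the bottom layer. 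None of this appears in the paper, which only gestures at the cyclotomic structure on $K(\End(A))$ (relegated to a footnote citing forthcoming work). You also correctly identify the real obstruction --- producing genuine cyclotomic/pro-cyclotomic structure and matching the $C_n$-actions on the layers --- which is exactly where the paper punts.

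One subtlety worth flagging: you define the truncated stage by \emph{localizing} $\End(A)$ at the $V^{\geq n}$ subcategory, whereas the paper defines $\widetilde{K}(\End(A))/V^{\geq n}$ directly as the homotopy cofiber of $K(V^{\geq n}\End(A))\to K(\End(A))$. These should agree via a Waldhausen localization theorem, but that requires verifying that $V^{\geq n}\End(A)$ sits inside $\End(A)$ in the right way (e.g.\ that cofibrations and weak equivalences interact correctly), and neither you nor the paper verifies this. Your aside that the naive full subcategory ``is not closed under subobjects'' and needs mild enlargement is a legitimate worry and is not addressed in the paper either. In short: your proposal is a reasonable and somewhat more detailed sketch of the same strategy the paper sketches, but since the paper itself offers no proof, there is no gap for you to have missed --- only the open problems the author also acknowledges.
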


We offer some reasonably compelling evidence for this conjecture, and  return to it in work-in-progress.

\subsection*{Outline}

In \cref{sec:witt} we review the classical definitions of the big Witt vectors and $p$-typical Witt vectors. 

In the next section, we explain a motivating example: the big Witt vectors of $\mathbb{Z}$ can be constructed as the $K$-theory of almost finite sets. 

In \cref{sec:main_section} we review the linear algebraic view of the Witt vectors. This is mostly a synthesis of \cite{almkvist1, grayson_witt}. The only new observation is the existence of the multiplicative norm arises at the categorical level. \cref{sec:p_typical} is a quick discussion of the $p$-typical Witt vectors from a linear algebraic perspective.

\cref{sec:verification} is devoted to using linear algebra to verify a number of identities in the Witt vectors. These verifications are considerably less painful than the usual ones. Again, much of this material is exposition of facts surely known to Almkvist and others. As a novel application, however,  we quickly recover identities involving Angeltveit's Witt norm. We discuss how the Artin-Hasse exponential can be viewed as a $p$-typical characteristic polynomial. 

\cref{sec:main_results} moves to a K-theoretic view of the Witt vectors. We show that $W(A)$ is a completion of the algebraic K-theory of endomorphisms. Following that, in \cref{sec:presentation}, we also give a new presentation of the Witt vectors, as well as a fairly algorithmic process for translating between ghost and Witt coordinates. 

Finally, \cref{sec:conjectures} contains some conjectures suggested by the analysis in this paper.

\subsection*{Acknowledgements}

The description of the rational Witt vectors in this paper has been around since Almkvist's papers, and the author was alerted to it by Dan Grayson's exposition \cite{grayson_witt}. The author has explained this perspective to many experts on Witt vectors, and it does not seem to be well known, hence this paper. 

David Mehrle provided helpful feedback on a draft of this paper, and caught many typos in the process. The author would like to thank Inna Zakharevich for encouraging him to write this paper, and for a careful and helpful reading of a draft. The author thanks Duke University and Kirsten Wickelgren for the opportunity to write it. 

\section{A Review of the Witt Vectors}\label{sec:witt}

We begin with the big Witt vectors. The definitions below are terse (but not much more terse than usual). For less idiosyncratic treatments of the Witt vectors, we recommend \cite{hesselholt_witt, hesselholt_madsen}. 

\begin{defn}\label{defn:witt_vectors}
  The \textbf{(big) Witt functor} is an endofunctor $W: \operatorname{Ring} \to \operatorname{Ring}$ with the following property. As a set,
  \[
  W(A) = \prod^\infty_{i=1} A = \{(a_1, a_2, \dots): a_i \in A\}
  \]
  The ring structure is determined by the requirement that the \textbf{ghost map}
  \[
  a_n \mapsto \sum_{d|n} d a_d^{n/d}
  \]
  determines a natural transformation $W(-) \to \prod(-)$ where the ring structure on the latter is given coordinate-wise. 
\end{defn}

A related definition is the $p$-typical version.

\begin{defn}
  The \textbf{$p$-typical Witt functor} is the endofunctor $W_{p^\infty}: \operatorname{Ring} \to \operatorname{Ring}$ with the following property. As a set
  \[
  W_{p^\infty} (A) = \prod^\infty_{i=0} A = \{(a_0, a_1, a_2, \dots): a_i \in A\}.
  \]
  The ring structure is determined by the requirement that the ``ghost map''
  \[
  a_n \mapsto \sum_i p^i a^{p^{n-i}}_i
  \]
  determines a natural transformation of functors $W_{p^\infty} (-) \to \prod(-)$
\end{defn}

\begin{rmk}
Note the minor difference in indexing conventions: we think of $W_{p^\infty}(A)$ as being indexed on $p^i$, hence starting with $i = 0$, which corresponds to 1,  instead of $i = 1$, which would correspond to $p$. 
\end{rmk}

From our perspective, there are a few problems with this definition. The first is that it is entirely opaque --- exactly why these ghost maps should arise is obscure. The second is that to define $p$-typical Witt vectors for $p$-torsion rings, one must invoke functoriality. The third is that it is not obvious that it is well defined. It is verified by proving the following.
\begin{thm}\label{thm:universal_polynomials}
  There are two families of polynomials (the \textbf{Witt universal polynomials})
  \begin{align*}
    s(x_1,x_2, \dots,  y_1, y_2, \dots), m(x_1,x_2, \dots, y_1, y_2,\dots) \in \mathbb{Z}[x_1,x_2, \dots, y_1, y_2, \dots]
  \end{align*}
  such that for $\mbf{a}, \mbf{b} \in W(A)$
  \begin{align*}
    \mbf{a}+\mbf{b} &= (s_1(\mbf{a},\mbf{b}), s_2(\mbf{a},\mbf{b}), \dots)\\
    \mbf{a}\cdot\mbf{b} &= (m_1 (\mbf{a},\mbf{b}), m_2(\mbf{a},\mbf{b}), \dots)
  \end{align*}
  An identical statement holds for $W_{p^\infty}(A)$. Furthermore, the polynomials $s_i, m_i$ depend only on $x_1, \dots, x_i, y_1, \dots, y_i$. 
\end{thm}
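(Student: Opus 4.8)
The plan is to settle everything in one universal example and then obtain the general statement by specialization, the sole substantive input being a $p$-adic integrality criterion (``Dwork's lemma''). Work in the torsion-free ring $R = \mathbb{Z}[x_1,x_2,\dots,y_1,y_2,\dots]$, so $R$ embeds into $R_{\mathbb{Q}} := R\otimes\mathbb{Q}$. First I would record the shape of the ghost map: writing $w_n$ for the $n$-th ghost coordinate, $w_n(\mathbf{a}) = n\,a_n + P_n(a_1,\dots,a_{n-1})$ where $P_n$ is a universal polynomial over $\mathbb{Z}$ involving only $a_d$ for proper divisors $d\mid n$. Consequently, over any $\mathbb{Q}$-algebra the ghost map $w\colon W(-)\to\prod(-)$ is \emph{bijective}: one solves $a_n = \tfrac1n\bigl(w_n - P_n(a_1,\dots,a_{n-1})\bigr)$ by recursion on $n$, which also exhibits each $a_n$ as a polynomial, with $\mathbb{Q}$-coefficients, in $w_1,\dots,w_n$ (in fact in the $w_d$ with $d\mid n$).

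Now let $\mathbf{x}=(x_1,x_2,\dots)$ and $\mathbf{y}=(y_1,y_2,\dots)$ be the generic elements of $W(R)$, and form in $\prod R_{\mathbb{Q}}$ the componentwise sum $\mathbf{u}=\bigl(w_n(\mathbf{x})+w_n(\mathbf{y})\bigr)_n$ and componentwise product $\mathbf{v}=\bigl(w_n(\mathbf{x})w_n(\mathbf{y})\bigr)_n$. By bijectivity of $w$ over $R_{\mathbb{Q}}$ there are unique $\mathbf{s}=(s_i)_i$ and $\mathbf{m}=(m_i)_i$ with $s_i,m_i\in R_{\mathbb{Q}}$ satisfying $w(\mathbf{s})=\mathbf{u}$ and $w(\mathbf{m})=\mathbf{v}$; and since $w_i(\mathbf{x}),w_i(\mathbf{y})$ involve only $x_1,\dots,x_i,y_1,\dots,y_i$ while $s_i,m_i$ are polynomials in $w_1,\dots,w_i$ of the respective sequences, the last clause of the theorem (dependence on the first $i$ variables) is automatic.

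The whole force of the theorem is now the claim that $s_i,m_i$ actually lie in $R$ --- have \emph{integer}, not merely rational, coefficients --- and this is the step I expect to be the real obstacle. I would deduce it from Dwork's lemma: if a torsion-free ring $R$ carries, for each prime $p$, a ring endomorphism $\phi_p$ with $\phi_p(r)\equiv r^p\pmod{pR}$, then a sequence $(w_n)_n\in\prod R$ lies in the image of $w\colon W(R)\to\prod R$ exactly when $w_n\equiv\phi_p\bigl(w_{n/p}\bigr)\pmod{p^{v_p(n)}R}$ for every prime $p\mid n$, where $v_p$ is the $p$-adic valuation. On $R=\mathbb{Z}[x_i,y_i]$ take the Frobenius lifts $\phi_p(x_i)=x_i^p$, $\phi_p(y_i)=y_i^p$. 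A direct computation with $w_n(\mathbf{x})=\sum_{d\mid n}d\,x_d^{n/d}$ yields $w_n(\mathbf{x})-\phi_p\bigl(w_{n/p}(\mathbf{x})\bigr)=\sum_{d\mid n,\ v_p(d)=v_p(n)}d\,x_d^{n/d}$, and each such $d$ is divisible by $p^{v_p(n)}$; so $\mathbf{x}$, and symmetrically $\mathbf{y}$, satisfies the Dwork congruences. Because each $\phi_p$ is a ring homomorphism, these congruences pass to componentwise sums and products of sequences, so $\mathbf{u}$ and $\mathbf{v}$ satisfy them as well, and Dwork's lemma places $\mathbf{u},\mathbf{v}$ in the image of the \emph{integral} ghost map. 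Hence $\mathbf{s},\mathbf{m}\in W(R)$, i.e.\ $s_i,m_i\in R$.

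For an arbitrary ring $A$ and $\mathbf{a},\mathbf{b}\in W(A)$, the unique ring map $R\to A$ with $x_i\mapsto a_i$, $y_i\mapsto b_i$ carries $\mathbf{s},\mathbf{m}$ to the claimed coordinate formulas for $\mathbf{a}+\mathbf{b}$ and $\mathbf{a}\cdot\mathbf{b}$; and naturality of $w$ together with its injectivity on torsion-free rings lets one promote the ring axioms --- as well as the identities $0=(0,0,\dots)$, $1=(1,0,0,\dots)$ and the existence of negatives (via the analogous universal negation polynomial) --- from the universal example to all rings by choosing a surjection onto $A$ from a polynomial ring. This simultaneously shows that $W$ is a functor $\operatorname{Ring}\to\operatorname{Ring}$ and that $w$ is a natural transformation to $\prod(-)$, so \cref{defn:witt_vectors} is well posed. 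The $p$-typical case runs identically, now with $w_n(\mathbf{a})=\sum_i p^i a_i^{p^{n-i}}$ (so $w_n=p^n a_n+(\text{earlier terms})$ and solvability holds over $\mathbb{Z}_{(p)}$-algebras) and the single-prime form of Dwork's lemma. The only genuinely nontrivial ingredient is Dwork's lemma itself together with the verification of its congruence hypotheses on the generic Witt vectors; all the rest is bookkeeping with the universal example.
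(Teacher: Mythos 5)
Your proof is correct, and it is the standard classical derivation: reduce to the universal example $R=\mathbb{Z}[x_1,x_2,\dots,y_1,y_2,\dots]$, observe bijectivity of the ghost map after inverting integers so that $s_i,m_i$ exist with rational coefficients, and obtain integrality from Dwork's lemma. The congruence computation $w_n(\mathbf{x})-\phi_p\bigl(w_{n/p}(\mathbf{x})\bigr)=\sum_{d\mid n,\ v_p(d)=v_p(n)} d\,x_d^{n/d}$ is right (every such $d$ is divisible by $p^{v_p(n)}$), and the key step --- that the Dwork congruences are preserved under componentwise sums and products because $\phi_p$ is a ring homomorphism --- is correctly identified and carried out. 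The specialization argument at the end, and the remark on the $p$-typical case over $\mathbb{Z}_{(p)}$-algebras, close the loop.

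The paper, however, takes a genuinely different route; indeed it does not prove \cref{thm:universal_polynomials} in \cref{sec:witt} at all, where it is quoted as standard background with references. The paper's own construction, unfolded in \cref{sec:main_section} through \cref{sec:presentation}, identifies $\widetilde{K}_0(\End(A))$ with the rational power series via the characteristic polynomial $\det(\id-tf)$ (\cref{thm:workhorse}), forms $W(A)$ as the $V$-completion of that ring, and reads off the multiplication categorically from the tensor product of endomorphisms (\cref{thm:presentation}). In that framework the integrality problem you dispatch with Dwork's lemma simply never arises: determinants land in $A$ by construction, and Witt multiplication is whatever makes $\ch$ a ring map. Dwork's lemma itself is then recovered in \cref{sec:main_section} as a corollary describing when the ``$V$-division'' of ghost coordinates proceeds integrally, rather than serving as the proof's engine. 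Your argument is shorter and self-contained; the paper's is longer but explains conceptually \emph{why} the universal polynomials are integral, and produces the Frobenius, Verschiebung, and norm operations along the way.
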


The (big) Witt vectors come equipped with two useful operations. Each of them must either be specified on the Witt coordinates, or else specified on the ghost coordinates, and shown to lift to the Witt coordinates. 

\begin{defn}
  The ring of big Witt vectors $W(A)$ is equipped with a family of additive maps $V^i: W(A) \to W(A)$ called the \textbf{Verschiebung(s)}. They are specified on Witt coordinates by
  \[
  V^i(a_1, a_2, \dots) = (0, \dots, \underbrace{a_1}_{ith\ \text{spot}}, \dots, \underbrace{a_2}_{\text{2ith}\ \text{spot}}, \dots)
  \]
\end{defn}

\begin{defn}
  The ring of big Witt vectors $W(A)$ is equipped with a family of multiplicative maps $F^i: W(A) \to W(A)$ called the \textbf{Frobenius maps} specified on ghost coordinates by
  \[
  F^i(a_1, a_2, \dots) = (a^i_1, a^i_2, \dots)
  \]
\end{defn}

There are similar definitions for the $p$-typical Witt vectors, but they involve a mild change in indexing.

\begin{defn}
The ring of $p$-typical Witt vectors $W_{p^\infty}(A)$ is equipped with an additive map $V: W_{p^\infty}(A) \to W_{p^\infty}(A)$ called the \textbf{Verschiebung}. It is specified in Witt coordinates by 
\[
V(a_0, a_1, \dots, ) = (0, a_0, a_1, \dots)
\]
\end{defn}

\begin{defn}
The ring of $p$-typical Witt vectors $W_{p^\infty}(A)$ is equipped with a multiplicative map $F: W(A) \to W(A)$ given on ghost coordinates by 
\[
F^p(a_0, a_1, \dots) = (a^p_0, a^p_1, \dots)
\]
\end{defn}

The standard way to check relations on the (big) Witt vectors is then to check on either the Witt or ghost coordinates. 

For later use, we note the following. The big Witt vectors are often encoded as the power series ring $(1+A[[t]])^\times$ with the identification
\[
(a_1, a_2, a_3, \dots) \mapsto \prod^\infty_{i=1} (1-a_i t^i)
\]
 As can be seen essentially from long division, every element in $(1+A[[t]])^\times$ can be written in the form of an element on the right. The group addition on $(1+A[[t]])^\times$ is given by power series multiplication. The multiplication, $\ast$, on $(1+A[[t]])^\times$ is defined by the requirement that $(1-at)\ast (1-bt) = (1-abt)$. Again, it must be shown that such a multiplication exists. 

There are topologies on $W(A)$ and $W_{p^\infty}(A)$ which will be important in the sequel. First, a definition. 

\begin{defn}
$W_n (A)$ will denote the \textbf{truncated big Witt vectors}. As a set, it is isomorphic to $\prod^n_{i=1} A$. It is given a ring structure by using the first $n$ universal Witt polynomials. 

Similarly, $W_{p^n} (A)$ denotes the \textbf{truncated $p$-typical Witt vectors}. As a set, it is isomorphic to $\prod^n_{i=1} A$. It is given a ring structure by the first $n$ universal $p$-typical Witt vectors. 
\end{defn}

The (big) Witt vectors are equipped with restriction maps $R: W_{n+1} (A) \to W_n (A)$  and $R:W_{p^{n+1}} (A) \to W_{p^n}(A)$ given by truncating the last element. It is then standard that
\[
W(A) = \varprojlim W_n (A) \qquad W_{p^\infty}(A) = \varprojlim W_{p^n}(A). 
\]
This presentation of $W(A)$ and $W_{p^\infty} (A)$ imbues them with topologies, which are typically called the \textbf{$V$-topology}. In the case of $W_{p^\infty} (A)$, this is because $W_{p^n} (A) = W_{p^\infty}(A)/V^nW_{p^\infty}(A)$ and
\[
W_{p^\infty} (A) \supset V^1 W_{p^\infty} (A) \supset V^2 W_{p^\infty} (A) \supset \cdots 
\]
is a decreasing filtration on $W(A)$. Similarly, if we let 
\[
V^{\geq i} W(A) = \{(0, \dots, 0, a_i, a_{i+1}, \dots) \in W(A)\}
\]
this gives a decreasing filtration on $W(A)$
\[
W(A) = V^{\geq 1} W(A) \supset V^{\geq 2} W(A) \supset \cdots 
\]
such that $W_n (A) = W(A)/V^{\geq n+1}W(A)$.  

\begin{rmk}
We chose the notation $V^i$ and $V^{\geq i}$ to emphasize the difference in the role that Verschiebung is playing. In the case of the $p$-typical Witt vectors, one really is filtering by iterated Verschiebungs, since there is only one $V$ map. This is not the case for the big Witt vectors. 
\end{rmk}

In the case of the big Witt vectors, the topology is also reflected on power series.

\begin{defn}
 Let $(1+A[[t]])^\times_{\geq n}$ denote the power series of the form $1 + a_n t^n + a_{n+1} t^{n+1}+\cdots$
\end{defn}

The following proposition captures everything we need about about the topology on power series.

\begin{prop}\label{lem:truncated_iso}
  $(1+A[[t]])^{\times}_{\geq n}$ is an ideal of $(1+A[[t]])^\times$ with Witt multiplication. The topologies specified by $V^{\geq n} W(A)$ and $(1+A[[t]])^\times_{\geq n}$ coincide, and
  \[
  \begin{tikzcd}
    W_n (A) \ar[r] & (1+A[[t]])^\times / (1+A[[t]])^\times_{\geq n}
  \end{tikzcd}
  \]
  is a ring homomorphism. 
\end{prop}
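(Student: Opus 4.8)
The plan is to transport both filtrations through the standard ring isomorphism $W(A)\xrightarrow{\ \cong\ }(1+A[[t]])^\times$, $(a_1,a_2,\dots)\mapsto\prod_i(1-a_it^i)$, recalled above, and to begin by identifying the image of $V^{\geq n}W(A)$.

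\emph{Identifying the filtration.} Under the isomorphism $\prod_{i\geq n}(1-a_it^i)=1+(\text{terms of degree}\geq n)$, so $V^{\geq n}W(A)$ maps into $(1+A[[t]])^\times_{\geq n}$. Conversely, running the ``long division'' recalled above on $f=1+c_nt^n+\cdots$ and solving for the $a_i$ recursively, the vanishing of the coefficients of $t^1,\dots,t^{n-1}$ forces $a_1=\cdots=a_{n-1}=0$, so $f=\prod_{i\geq n}(1-a_it^i)$ already lies in the image of $V^{\geq n}W(A)$. Hence $V^{\geq n}W(A)$ and $(1+A[[t]])^\times_{\geq n}$ correspond under the isomorphism. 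In particular $(1+A[[t]])^\times_{\geq n}$ is an additive subgroup, the two descending filtrations match, and therefore the $V$-topology on $W(A)$ and the topology defined by the $(1+A[[t]])^\times_{\geq n}$ coincide; this is part (2).

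\emph{The ideal property.} Given $f\in(1+A[[t]])^\times_{\geq n}$, write $f=\prod_{i\geq n}(1-a_it^i)$ (so that $a_1=\cdots=a_{n-1}=0$) and let $g=\prod_j(1-b_jt^j)$ be arbitrary. By \cref{thm:universal_polynomials} the $k$-th Witt coordinate of $f\ast g$ is $m_k(a_1,\dots,a_k,b_1,\dots,b_k)$, so it suffices to show the polynomial $m_k$ vanishes when $x_1=\cdots=x_{n-1}=0$ and $k<n$. This is one identity in $\mathbb Z[x_1,x_2,\dots,y_1,y_2,\dots]$, so it may be checked over $\mathbb Q$-algebras, where the ghost map is injective. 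There it is immediate: the ghost components $w_m$ of a Witt vector of the form $(0,\dots,0,a_n,a_{n+1},\dots)$ vanish for $m<n$ (every divisor of such an $m$ lies in the zeroed range $1,\dots,n-1$), and $w_m(f\ast g)=w_m(f)\,w_m(g)$ since the ghost map is a ring homomorphism, so $w_m(f\ast g)=0$ for $m<n$; injectivity then forces $m_k=0$ for $k<n$. Base-changing this polynomial identity to $A$ shows the first $n-1$ Witt coordinates of $f\ast g$ vanish, i.e. $f\ast g\in(1+A[[t]])^\times_{\geq n}$; together with the previous paragraph this proves part (1). (One can instead avoid \cref{thm:universal_polynomials} by verifying the claim over a torsion-free ring surjecting onto $A$ and invoking naturality of $\ast$.)

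\emph{The quotient map.} With part (1) in hand, $(1+A[[t]])^\times/(1+A[[t]])^\times_{\geq n}$ is a ring, and $W(A)\xrightarrow{\ \cong\ }(1+A[[t]])^\times\twoheadrightarrow(1+A[[t]])^\times/(1+A[[t]])^\times_{\geq n}$ is a ring surjection whose kernel is $V^{\geq n}W(A)\supseteq V^{\geq n+1}W(A)$; it therefore factors through $W(A)/V^{\geq n+1}W(A)=W_n(A)$, and the induced map is the asserted ring homomorphism. The main obstacle is the ideal property: over torsion-free rings the injectivity of the ghost map makes it transparent, and the only genuine work is the dévissage from there to an arbitrary ring.
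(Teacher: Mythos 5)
The paper does not supply a proof of this proposition---it is stated and then used---so there is no argument to compare against; you were filling a gap and you have done so correctly. Your treatment of the ideal property is the natural one: reduce to the vanishing of the universal polynomials $m_k(0,\dots,0,y_1,\dots,y_k)$ for $k<n$, check this polynomial identity over $\mathbb{Q}$ via the ghost map (where the recursive solvability of the ghost equations makes the vanishing transparent), and base-change. This is exactly the dévissage style the paper uses throughout (see the opening of \cref{sec:verification}), so the spirit matches; the alternative of verifying the ideal property on rational power series via tensor products of endomorphisms and then passing to the $V$-adic completion would also work but is not obviously shorter.

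One thing you correctly noticed, and it deserves flagging: with the paper's convention $W_n(A) = W(A)/V^{\geq n+1}W(A)$, the map $W_n(A)\to (1+A[[t]])^\times/(1+A[[t]])^\times_{\geq n}$ has nontrivial kernel $V^{\geq n}W(A)/V^{\geq n+1}W(A)$, hence is a surjective ring homomorphism but not an isomorphism. The proposition only asserts a ring homomorphism, so your argument is faithful to the statement. But note that the paper later combines this proposition with \cref{rational_filtered_equivalence} to deduce $W_n(A)\cong\widetilde{K}_0(\End(A))/V^n$; for that to go through the target should really be $(1+A[[t]])^\times/(1+A[[t]])^\times_{\geq n+1}$, i.e.\ there is an off-by-one in the paper's indexing that your careful bookkeeping makes visible.
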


This finishes our exposition of the Witt vectors from a standard viewpoint.

\section{Almost Finite Sets}

As a prelude to the discussion linking $K$-theory and $W(A)$, this short section is a case study in the relationship between $K$-theory and the big Witt vectors of $\mathbb{Z}$. It has long been known that there are simple, $K$-theoretic, ways to present $W(\mathbb{Z})$. Such presentations come come up in \cite{dold_iterated, DS2}. In \cite{DS2} the authors consider a completed Burnside ring $\widehat{\Omega}_{C} (\mathbb{Z})$. In the case of \cite{dold_iterated}, the author considers a monoid he calls \textbf{PER}, which consists of infinite permutations, but of what he calls ``finite type'' and then group completes to $K_0 (\mbf{PER})$. In either case, the authors show they have a $K$-theoretic model for $W(\mathbb{Z})$.

These approaches are equivalent, and we use definitions from \cite{DS2}. 

\begin{defn}
Let the category of \textbf{almost finite sets}, $\Set^{\operatorname{af}}$ be the category of pairs $(X, \sigma)$ where $X$ is a countable set and  $\sigma: X \to X$ is a permutation such that the fixed point set of $\sigma^n$ is finite for each $n$. Morphisms $f:(X,\sigma_X) \to (Y, \sigma_Y)$ are bijections of sets $f: X \to Y$ that commute with the given permutations. The \textbf{Grothendieck group of almost finite sets} is $K_0 (\Set^{\text{af}})$. 
\end{defn}

\begin{rmk}
  The $K_0 (\Set^{\text{af}})$ is generated by formal sums of cycles with $\mathbb{Z}$-coefficients. That is, every element $[\sigma] \in K_0 (\Set^{\text{af}})$ can be written as a formal sum, with each $c_i$ finite,
  \[
  [\sigma]  = \sum^\infty_{i=1}  c_i C_i 
  \]
  where $C_i$ denotes an $i$-cycle. 
\end{rmk}

The key insight of \cite{DS2} is then the following. 

\begin{thm}\cite[Thm.~3]{DS2}
  Let $(1+Z[[t]])^\times$ be considered as a ring with Witt multiplication. Then map
  \[
  \begin{tikzcd}
    K_0 (\Set^\text{af}) \ar{r} & (1+\mathbb{Z}[[t]])^\times 
  \end{tikzcd}
  \qquad \sum^\infty_{i=1} c_i C_i \mapsto \prod^\infty_{i=1} (1-t^i)^{-c_i}
  \]
  is a ring isomorphism. 
\end{thm}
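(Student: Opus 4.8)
The plan is to make the ring structure on $K_0(\Set^{\text{af}})$ completely explicit, verify that the displayed map — call it $\phi$ — is an isomorphism of abelian groups, and then reduce multiplicativity to a single computation on cycles. First, the source ring. Every almost finite set $(X,\sigma)$ is the disjoint union of its $\sigma$-orbits, each of which is a finite cycle, and the finiteness of $\mathrm{Fix}(\sigma^n)$ for all $n$ is precisely the condition that there be only finitely many orbits of each fixed length. Hence $(\Set^{\text{af}},\sqcup)$ has Grothendieck group $\prod_{i\ge 1}\mathbb{Z}$, topologically generated by the cycle classes $C_i$ and complete for the resulting ($V$-)topology. The multiplication is induced by $(X,\sigma_X)\times(Y,\sigma_Y):=(X\times Y,\sigma_X\times\sigma_Y)$, which is again almost finite because $\mathrm{Fix}\big((\sigma_X\times\sigma_Y)^n\big)=\mathrm{Fix}(\sigma_X^{\,n})\times\mathrm{Fix}(\sigma_Y^{\,n})$, with unit $C_1$; decomposing $\mathbb{Z}/i\times\mathbb{Z}/j$ under the diagonal shift into $\gcd(i,j)$ orbits of length $\operatorname{lcm}(i,j)$ gives $C_i\cdot C_j=\gcd(i,j)\,C_{\operatorname{lcm}(i,j)}$.

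Next, $\phi$ is a group isomorphism. By the long-division argument recalled just before \cref{lem:truncated_iso}, every element of $(1+\mathbb{Z}[[t]])^\times$ is uniquely an ordered product $\prod_{i\ge1}(1-t^i)^{e_i}$ with $e_i\in\mathbb{Z}$, so the elements $1-t^i$ also topologically generate its additive (power-series-multiplication) group freely; matching the two generating systems, $\phi$ is a continuous isomorphism of abelian groups, compatible with the two $V$-topologies. Because the product on $K_0(\Set^{\text{af}})$ and Witt multiplication $\ast$ are both continuous and biadditive and the $C_i$ are dense, it now suffices to check $\phi(C_i\cdot C_j)=\phi(C_i)\ast\phi(C_j)$, i.e., using the previous sentence, that $\phi(C_i)\ast\phi(C_j)$ is $\gcd(i,j)$ copies of $\phi(C_{\operatorname{lcm}(i,j)})$ in the additive structure.

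For this cycle computation I would pass to the ghost map. Since $\mathbb{Z}$ is torsion-free, the ghost map $(1+\mathbb{Z}[[t]])^\times\hookrightarrow\prod_{n\ge1}\mathbb{Z}$ is injective, so it is enough to compare ghost components; and a direct computation gives that the $n$-th ghost component of $1-t^i$ equals $i$ when $i\mid n$ and $0$ otherwise — exactly $|\mathrm{Fix}(C_i^{\,n})|$. Thus $\phi$ followed by the ghost map is (up to the chosen normalization) the homomorphism sending an almost finite set $\sigma$ to the sequence $\big(|\mathrm{Fix}(\sigma^n)|\big)_{n}$ of fixed-point counts of its powers; this is multiplicative for the Cartesian product by the product formula for fixed points noted above, and ghost components are multiplicative for $\ast$, so $\phi$ is a ring homomorphism and hence a ring isomorphism. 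Alternatively one can avoid ghosts: factoring $1-t^i=\prod_{\zeta^i=1}(1-\zeta t)$ over $\mathbb{Z}[\zeta_{ij}]$ and using biadditivity of $\ast$ with $(1-at)\ast(1-bt)=(1-abt)$ gives $(1-t^i)\ast(1-t^j)=\prod_{\zeta^i=1,\,\xi^j=1}(1-\zeta\xi t)=(1-t^{\operatorname{lcm}(i,j)})^{\gcd(i,j)}$, using that $\{\zeta\xi\}$ runs over the group of $\operatorname{lcm}(i,j)$-th roots of unity with each value attained $\gcd(i,j)$ times; this identity has $\mathbb{Z}$-coefficients on both sides and so descends along $\mathbb{Z}\hookrightarrow\mathbb{Z}[\zeta_{ij}]$.

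The only genuinely delicate point is the bookkeeping of conventions: one has to pin down the chosen identifications of $W(\mathbb{Z})$, of $(1+\mathbb{Z}[[t]])^\times$ with Witt vectors, and of the $\ast$-unit, together with the sign in the exponent $-c_i$, so that the map of the theorem is exactly the $\phi$ produced above (in particular so that $\phi$ carries the multiplicative unit $C_1$ to the $\ast$-unit); once these normalizations are fixed, every remaining step — the orbit count, the ghost computation, and the biadditive-plus-continuous reduction to generators — is routine.
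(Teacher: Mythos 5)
The paper does not prove this statement itself; it is cited as \cite[Thm.~3]{DS2}, so there is no internal proof to compare against. Your reconstruction takes the right route: decompose almost finite sets into orbits to get $K_0(\Set^{\mathrm{af}})\cong\prod_{i\ge 1}\mathbb{Z}$ topologically generated by cycles with $C_i\cdot C_j=\gcd(i,j)\,C_{\operatorname{lcm}(i,j)}$, match this against the topological generators $1-t^i$ of $(1+\mathbb{Z}[[t]])^\times$, and reduce multiplicativity by biadditivity and continuity to a single identity for $(1-t^i)\ast(1-t^j)$, which you check both by ghost coordinates (injective since $\mathbb{Z}$ is torsion-free) and by the splitting $1-t^i=\prod_{\zeta^i=1}(1-\zeta t)$ over $\mathbb{Z}[\zeta_{ij}]$. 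All of these steps are sound.

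However, the caveat you append at the end about ``bookkeeping of conventions'' is doing more work than you acknowledge, and as the argument stands it ends just short of a sign contradiction. Under the conventions the paper fixes in \cref{sec:witt} --- $(a_1,a_2,\dots)\mapsto\prod_i(1-a_it^i)$ and $(1-at)\ast(1-bt)=(1-abt)$ --- the multiplicative unit of $(1+\mathbb{Z}[[t]])^\times$ is $1-t$, but the displayed map sends the unit $C_1$ to $(1-t)^{-1}$, which is the additive inverse $-1$, so the map is not unital. Your own root-of-unity identity makes this concrete: $(1-t^i)\ast(1-t^j)=(1-t^{\operatorname{lcm}(i,j)})^{\gcd(i,j)}$ together with biadditivity ($(-x)\ast(-y)=x\ast y$) gives $\phi(C_i)\ast\phi(C_j)=(1-t^{\operatorname{lcm}(i,j)})^{\gcd(i,j)}$, whereas $\phi(C_i\cdot C_j)=(1-t^{\operatorname{lcm}(i,j)})^{-\gcd(i,j)}$ --- the additive inverse. (The same sign mismatch shows in the ghost check: with $t\tfrac{d}{dt}\log$, the $n$-th ghost coordinate of $1-t^i$ is $-i$ at multiples of $i$, not $+i$.) The theorem holds verbatim under the \emph{opposite} identification $(a_n)\mapsto\prod(1-a_nt^n)^{-1}$, equivalently $(1-at)\ast(1-bt)=(1-abt)^{-1}$, and that is exactly why the exponent $-c_i$ appears in the displayed formula. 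You should pick one normalization, state it, and carry it consistently through the biadditivity step; leaving it to ``routine bookkeeping'' hides the fact that, with the conventions on the page, the displayed map is a ring isomorphism only after replacing $-c_i$ by $c_i$ (or after flipping the identification of $W(\mathbb{Z})$ with $(1+\mathbb{Z}[[t]])^\times$).
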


One of the virtues of this theorem is that if one were to take $K_0 (\Set^{\text{af}})$ as the \textit{definition} of the big Witt vectors of $\mathbb{Z}$, multiplication and addition would become fantastically simple. Elements are formal sums of cycles $C_i$, and from the combinatorics of cycles we have the following formulas (see \cite[Eqn~(2.5.4)]{DS2})
  \[
  C_i \cdot C_j \cong \operatorname{gcd}(i, j) C_{\operatorname{lcm}(i, j)}
  \]
  The Frobenius and Verschiebung operators can be given categorically. The Frobenius, $F^i$, (called ``restriction'' in \cite{DS2}) is given by applying each cycle $i$ times. The Verschiebung maps (called ``induction'' in \cite{DS2}) are slightly more complicated, but still defined at the level of the category of finite sets.

  It is also worth discussing the topology on $K_0 (\Set^{\text{af}})$. To the author's knowledge, Dress and Siebeneicher do not discuss this in \cite{DS2}, but it is useful for intuition. 

\begin{defn}
  Let $V^{\geq k} \Set^{\text{fin}}$ consist of finite sets and permutations whose cycle types are of size greater than $k$.
\end{defn}

The group $K_0 (V^{\geq k} \Set^{\text{fin}})$ is generated by \textit{finite} formal sums of cycles $C_j$ with $j \geq k$.

\begin{defn}
  Let $V^{\geq k} \Set^{\text{af}}$ consist of almost finite sets and permutations whose cycle types are of size greater than $k$.
\end{defn}

The group $K_0 (V^{\geq k} \Set^{\text{af}})$ is generated by \textit{almost finite} formal sums of cycles $C_j$ with $j \geq k$. That is,  expessions of the form $\sum^\infty_{i=k} n_i C_i$ where $n_i$ is finite. From the multiplicative structure given above, this is an ideal of the ring $K_0 (\Set^{\text{af}})$.

\begin{defn}
  The \textbf{$V$-topology} on $K_0 (\Set^{\text{af}})$ is given by the decreasing filtration by ideals
  \[
  K_0 (\Set^{\text{af}}) \supset  K_0 (V^{\geq 1} \Set^{\text{af}})  \supset K_0 (V^{\geq 2} \Set^{\text{af}}) \supset \cdots 
  \]
  There is a similar $V$-topology on $K_0 (\Set^{\text{fin}})$. We also define
  \[
  K_0 (\Set^{\text{af}})/V^n := K_0 (\Set^{\text{af}})/K_0 (V^{\geq n} \Set^{\text{af}}) 
  \]
  and similar for $K_0 (\Set^{\text{fin}}) / V^n$. 
\end{defn}

A consequence of these definitions is the following lemma. 

\begin{lem}
  The maps
  \[
  \begin{tikzcd}
    K_0 (\Set^{\text{fin}}) / V^n \ar{r}& K(\Set^{\text{af}})/V^n
  \end{tikzcd}
  \]
  and
  \[
  \begin{tikzcd}
    K_0 (\Set^{\text{af}}) \ar{r} &  \varprojlim K_0 (\Set^{\text{af}})/V^n
  \end{tikzcd}
  \]
  are ring isomorphisms. 
\end{lem}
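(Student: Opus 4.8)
The plan is to make everything explicit in terms of the cycle generators $C_i$; once all four groups are identified, both maps become tautological identifications, and the only real work is checking compatibility with the ring structures.

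First I would record, from the two remarks preceding the statement, that $K_0(\Set^{\text{fin}})$ is the free abelian group $\bigoplus_{i\ge 1}\mathbb{Z}\,C_i$ on the cycle classes, whereas $K_0(\Set^{\text{af}})$ is the full product $\prod_{i\ge 1}\mathbb{Z}\,C_i$ of formal sums with arbitrary support — the almost-finiteness condition bounds the number of $i$-cycles for each fixed $i$, but puts no constraint on which lengths occur. The filtration pieces are correspondingly $K_0(V^{\ge n}\Set^{\text{fin}}) = \bigoplus_{i\ge n}\mathbb{Z}\,C_i$ and $K_0(V^{\ge n}\Set^{\text{af}}) = \prod_{i\ge n}\mathbb{Z}\,C_i$, so that
\[
K_0(\Set^{\text{fin}})/V^n \;\cong\; \bigoplus_{i=1}^{n-1}\mathbb{Z}\,C_i \;\cong\; \prod_{i=1}^{n-1}\mathbb{Z}\,C_i \;\cong\; K_0(\Set^{\text{af}})/V^n ,
\]
the middle isomorphism being the trivial coincidence of a finite direct sum with a finite product. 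The map in the lemma is induced by the inclusion $\Set^{\text{fin}}\hookrightarrow\Set^{\text{af}}$, which sends $C_i\mapsto C_i$, so under these identifications it is exactly the displayed isomorphism of abelian groups. For the ring statement, both quotients inherit their multiplication from $C_i\cdot C_j = \gcd(i,j)\,C_{\operatorname{lcm}(i,j)}$, extended bilinearly; this is legitimate on $K_0(\Set^{\text{af}})$ because the coefficient of $C_k$ in a product is a finite sum over the pairs $(i,j)$ with $\operatorname{lcm}(i,j)=k$, and legitimate on the quotients because $V^{\ge n}$ is an ideal. Since $C_i\mapsto C_i$ visibly respects this formula, the displayed map is a ring homomorphism, and hence a ring isomorphism; this settles the first assertion.

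For the second assertion, under the identification $K_0(\Set^{\text{af}})/V^n \cong \prod_{i=1}^{n-1}\mathbb{Z}\,C_i$ the restriction maps become the coordinate truncations $\prod_{i=1}^{n}\mathbb{Z}\,C_i \to \prod_{i=1}^{n-1}\mathbb{Z}\,C_i$, whose inverse limit is $\prod_{i\ge 1}\mathbb{Z}\,C_i$ with the evident projections. The canonical map $K_0(\Set^{\text{af}})\to\varprojlim_n K_0(\Set^{\text{af}})/V^n$ is then, after these identifications, the identity of $\prod_{i\ge 1}\mathbb{Z}\,C_i$: a formal sum $\sum_i c_i C_i$ goes to the compatible system of its truncations, and a compatible system has coefficients that stabilize (from the stage $n=i+1$ onward) to a unique such sum. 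As the limit of the ring-valued quotient maps, it is a ring homomorphism, hence a ring isomorphism.

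I do not anticipate a genuine obstacle; the content is bookkeeping with generators. The one point that deserves care is the distinction between the finitely supported sums of $K_0(\Set^{\text{fin}})$ and the arbitrary formal sums of $K_0(\Set^{\text{af}})$, together with the observation that this distinction becomes invisible after truncating below level $n$ — this is precisely why the first map is an isomorphism, and why $K_0(\Set^{\text{af}})$, unlike $K_0(\Set^{\text{fin}})$, is already complete for the $V$-topology.
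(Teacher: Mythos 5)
Your proposal is correct, and it is essentially the argument the paper has in mind: the paper gives no proof at all, stating the lemma as ``a consequence of these definitions,'' so your job was to fill in what ``consequence'' means, and you did so faithfully. The explicit identifications $K_0(\Set^{\text{fin}})\cong\bigoplus_{i\ge 1}\mathbb{Z}\,C_i$ and $K_0(\Set^{\text{af}})\cong\prod_{i\ge 1}\mathbb{Z}\,C_i$, the observation that the distinction collapses after truncation, the locally-finite check for well-definedness of the product on $K_0(\Set^{\text{af}})$ (finitely many $(i,j)$ with $\operatorname{lcm}(i,j)=k$), and the stabilization of coefficients in the inverse limit are exactly the bookkeeping that the paper elides. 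One minor stylistic remark: it is worth saying explicitly that $V^{\ge n}$ is an ideal \emph{because} $\operatorname{lcm}(i,j)\ge\max(i,j)$ — you appeal to it but leave the reason implicit.
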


And we obtain the following theorem. 

\begin{thm}
  Let $(1+\mathbb{Z}[[t]])^\times$ have the Witt ring structure, then
  \[
  \begin{tikzcd}
    \varprojlim K_0 (\Set^{\text{fin}})/V^n \ar{r} &  (1+\mathbb{Z}[[t]])^\times
  \end{tikzcd}
  \]
  is an isomorphism of rings. 
\end{thm}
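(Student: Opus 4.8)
The plan is to exhibit the displayed arrow as the inverse limit of a compatible system of finite-level ring isomorphisms, all of which are already available. At finite level the map sends the class of an $i$-cycle $C_i$ to $(1-t^i)^{-1}$, and I would first check that the isomorphism $K_0(\Set^{\text{af}}) \xrightarrow{\sim} (1+\mathbb{Z}[[t]])^\times$ of \cite[Thm.~3]{DS2} is compatible with the two $V$-filtrations. For the easy direction: a finite sum $\sum_{i\geq n} n_i C_i$ generating $K_0(V^{\geq n}\Set^{\text{af}})$ goes to $\prod_{i\geq n}(1-t^i)^{-n_i}$, and since each factor is $1+O(t^n)$ this lies in $(1+\mathbb{Z}[[t]])^\times_{\geq n}$. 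For the reverse direction one invokes the long-division normal form recalled in \cref{sec:witt}: every element of $(1+\mathbb{Z}[[t]])^\times_{\geq n}$ is uniquely of the form $\prod_{i\geq n}(1-a_it^i)$, and over $\mathbb{Z}$ this is matched by a (unique, hence injectivity-yielding) almost finite sum of cycles of length $\geq n$. So \cite[Thm.~3]{DS2} restricts to a ring isomorphism $K_0(V^{\geq n}\Set^{\text{af}}) \xrightarrow{\sim} (1+\mathbb{Z}[[t]])^\times_{\geq n}$, and therefore descends to $K_0(\Set^{\text{af}})/V^n \xrightarrow{\sim} (1+\mathbb{Z}[[t]])^\times/(1+\mathbb{Z}[[t]])^\times_{\geq n}$, which by \cref{lem:truncated_iso} is $W_n(\mathbb{Z})$ as a ring. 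Precomposing with the isomorphism $K_0(\Set^{\text{fin}})/V^n \xrightarrow{\sim} K_0(\Set^{\text{af}})/V^n$ from the preceding lemma yields a ring isomorphism $K_0(\Set^{\text{fin}})/V^n \xrightarrow{\sim} W_n(\mathbb{Z})$ for each $n$.

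Next I would verify naturality in $n$: on the left, truncation $K_0(\Set^{\text{fin}})/V^{n+1}\to K_0(\Set^{\text{fin}})/V^n$ kills the cycles $C_i$ with $i\geq n+1$; on the right, the restriction map $W_{n+1}(\mathbb{Z})\to W_n(\mathbb{Z})$ does the corresponding thing under the identification with $(1+\mathbb{Z}[[t]])^\times/(1+\mathbb{Z}[[t]])^\times_{\geq n}$; commutativity of the resulting squares is immediate from the formula $C_i\mapsto (1-t^i)^{-1}$. Passing to the inverse limit then gives the ring isomorphism $\varprojlim K_0(\Set^{\text{fin}})/V^n \xrightarrow{\sim} \varprojlim W_n(\mathbb{Z}) = W(\mathbb{Z}) \cong (1+\mathbb{Z}[[t]])^\times$, and by construction this is the displayed map. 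One can instead route through the preceding lemma directly: $\varprojlim K_0(\Set^{\text{fin}})/V^n \cong \varprojlim K_0(\Set^{\text{af}})/V^n \cong K_0(\Set^{\text{af}}) \cong (1+\mathbb{Z}[[t]])^\times$, but one still needs the filtration-compatibility of \cite[Thm.~3]{DS2} above to identify the individual quotients with $W_n(\mathbb{Z})$.

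The one place I would spend real care is exactly that filtration-compatibility statement, $K_0(V^{\geq n}\Set^{\text{af}})\xrightarrow{\sim}(1+\mathbb{Z}[[t]])^\times_{\geq n}$: the containment is the elementary degree estimate, but the reverse containment and the injectivity of the induced map on quotients both hinge on the \emph{uniqueness} of the long-division normal form, so that surjectivity at the level of filtered pieces plus injectivity of the ambient map gives an honest isomorphism of quotients rather than merely a surjection. Everything else — the ring-homomorphism property at finite level, commutativity with the transition maps, and exactness of $\varprojlim$ on these surjective towers — is formal bookkeeping inherited from \cref{lem:truncated_iso}, the preceding lemma, and \cite[Thm.~3]{DS2}; the only genuine nuisance is keeping straight the two uses of $V$ (the subobjects $V^{\geq n}$ versus the quotients $(-)/V^n$ by them).
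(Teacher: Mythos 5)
Your proposal is correct, and the short alternate route you mention at the very end is precisely the paper's proof: the paper simply chains $\varprojlim K_0(\Set^{\text{fin}})/V^n \cong \varprojlim K_0(\Set^{\text{af}})/V^n \cong K_0(\Set^{\text{af}}) \cong (1+\mathbb{Z}[[t]])^\times$, citing the preceding lemma for the first two steps and \cite[Thm.~3]{DS2} for the last, with a one-line remark that $\varprojlim^1$ vanishes by surjectivity of the tower maps. Your caveat that this route ``still needs'' the filtration-compatibility to identify each quotient with $W_n(\mathbb{Z})$ is not quite right: the theorem only asserts something about the limit, so the paper gets away without ever naming the finite quotients.

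Your main exposition does establish the stronger finite-level statement $K_0(\Set^{\text{fin}})/V^n \cong W_n(\mathbb{Z})$ via the filtration-compatibility of the Dress--Siebeneicher isomorphism, which is genuine extra content the paper leaves implicit, and is what one actually wants for computations. One small correction to how you argue the reverse containment $K_0(V^{\geq n}\Set^{\text{af}}) \twoheadrightarrow (1+\mathbb{Z}[[t]])^\times_{\geq n}$: invoking uniqueness of the Witt normal form $\prod_{i\geq n}(1-a_it^i)$ does not directly match up with the DS2 form $\prod_i(1-t^i)^{-c_i}$ (already $1-at^i$ for $a\neq 0,1$ is not a single factor of the latter type). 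The clean argument is a direct degree-by-degree induction: if $\prod_i(1-t^i)^{-c_i} = 1 + O(t^n)$, then reading off the coefficient of $t^1$ gives $c_1=0$, then the coefficient of $t^2$ gives $c_2 = 0$, and so on up to $c_{n-1}=0$, so the preimage lies in $K_0(V^{\geq n}\Set^{\text{af}})$. Since the ambient DS2 map is injective, this gives the isomorphism of filtered pieces, and the rest of your argument goes through as written.
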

\begin{proof}
  We know that $K_0 (\Set^{\text{af}}) \cong W(\mathbb{Z})$ is an isomorphism and that $K_0 (\Set^{\text{fin}})/V^n \cong K_0 (\Set^{\text{af}})/V^n$. Take the limit of both sides --- the only thing to worry about is $\varprojlim^1$ terms, but all the maps in the tower are surjective, so this vanishes. 
\end{proof}

This theorem is not so useful for computation, but it does have useful conceptual content: the Witt vectors arise from K-theory once we pass from a finite to infinite context in the most natural way possible.

\section{The Main Diagram}\label{sec:main_section}

We now come to the core of the paper. The viewpoint of this paper is that the ring of Witt vectors is best considered in terms of algebraic $K$-theory. To this end, we recall the definition of the $K$-theory of endomorphisms, and the key fact \cref{thm:workhorse}, due to Almkvist, which provides the impetus for this work. Roughly, it is the statement that the $K$-theory of the category of endomorphisms of finitely generated projective modules over a ring is isomorphic as a ring to the rational power series ring (here ``rational'' means fractions of polynomials, not defined over $\mathbb{Q}$). We then show that all the structure that one sees on Witt vectors can be recovered from this identification: Frobenius, Verschiebung, Norm, ghost maps, the $\lambda$-ring structure etc. Furthermore, they all have \textit{categorical} definitions. That is, they all come from functorial operations at the level of the category of endomorphisms. 

\begin{defn}\label{defn:k_end}
  Let $A$ be a commutative ring. Let $\operatorname{End}(A)$ denote the category of endomorphism of finitely generated projective $A$-modules. It has
  \begin{itemize}
  \item objects: Endomorphisms of finitely generated projective $A$-modules, $T: P \to P$
  \item morphisms: Given two endomorphisms $(P_1, T_1)$ and $(P_2, T_2)$, a morphism $(P_1, T_1) \to (P_2, T_2)$ is a map $f\colon P_1 \to P_2$ making the following diagram commute
    \[
    \begin{tikzcd}
      P_1\ar{d}{T_1}\ar{r}{f} & P_2 \ar{d}{T_2}\\
      P_1\ar{r}{f} & P_2
    \end{tikzcd}
    \]
  \end{itemize}
\end{defn}
The category $\End(A)$ is also an exact category: it has exact sequences given by sequences of endomorphisms that are exact upon forgetting to the category of modules. One can thus define a group completion $K_0 (\End(A))$ that has generators given by exact sequence $0 \to T_1 \to T_3 \to T_2 \to 0$ and relations $[T_3] = [T_2] + [T_1]$. There is a split surjection $K_0 (\End(A)) \to K_0 (A)$ given by forgetting the endomorphism, and the splitting is given by the zero endomorphism. 

\begin{defn}
  The \textbf{$K$-theory of endomorphisms} is the kernel of the split surjection \[
  K_0 (\End(A)) \to K_0 (A) \qquad (P, T) \mapsto P\]

  We denote it $\widetilde{K}_0 (\End(A))$. 
\end{defn}

We now state the theorem that is the backbone of this paper. 

\begin{thm}\cite{almkvist1}\cite[Cor.~3]{grayson_endo}\label{thm:workhorse}. 
  The characteristic polynomial
  \[
  \operatorname{ch}: \widetilde{K}_0 (\End(A)) \to (1+A[[t]])^\times 
  \]
  is injective, and its image is the rational power series. 
\end{thm}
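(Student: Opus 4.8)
The plan is to handle the two claims by different means: computing the image is soft, while injectivity requires exhibiting an explicit inverse on the image together with a structural (``rational canonical form'') input, which is where the real work lies. First one must make precise that $\ch$ is a well-defined homomorphism $K_0(\End(A)) \to (1+A[[t]])^\times$, the target being a group under multiplication of power series. For projective $(P,T)$ choose $Q$ with $P \oplus Q \cong A^n$ and set $\ch(P,T) := \det(\id - t(T \oplus 0))$; this is independent of $Q$ because adjoining a zero endomorphism does not change the determinant, and it lies in $1 + tA[t]$. Given a short exact sequence $0 \to (P_1,T_1) \to (P_3,T_3) \to (P_2,T_2) \to 0$, splitting the underlying module sequence presents $T_3$ as block upper triangular with diagonal blocks $T_1,T_2$, so $\det(\id - tT_3) = \det(\id - tT_1)\det(\id - tT_2)$ and $\ch$ is additive. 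Since $\ch(P,0) = 1$, it annihilates the image of the zero section and restricts to a homomorphism on $\widetilde K_0(\End(A))$.

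For the image: every element of $\widetilde K_0(\End(A))$ is a difference of classes of generators $(P,T)$, and by the extension-by-zero description $\ch(P,T) \in 1 + tA[t]$ is an honest polynomial, so $\ch$ of any element is a ratio of two such polynomials, i.e.\ a rational power series. Conversely, for $p \in 1 + tA[t]$ of degree $n$ the companion matrix $C_p$ of the monic reciprocal $\hat p(t) := t^n p(1/t)$ is an endomorphism of $A^n$ with $\ch(A^n, C_p) = p$; thus $p$, and — the image being a subgroup — every quotient $p/q$ with $p,q \in 1+tA[t]$, lies in the image. Hence the image is exactly the rational power series.

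For injectivity I would construct a one-sided inverse. To $q \in 1+tA[t]$ of degree $n$ attach $M_q := (A[t]/(\hat q),\, \cdot t)$, free of rank $n$ over $A$ (as $\hat q$ is monic) with $\ch(M_q) = q$, and put $\tilde M_q := [M_q] - [(A^n,0)] \in \widetilde K_0(\End(A))$. Since $\hat f$ is a non-zero-divisor of $A[t]$ and $\widehat{fg} = \hat f \hat g$ (up to a power of $t$, which enters only when a leading coefficient is a zero divisor), multiplication by $\hat f$ gives a short exact sequence $0 \to M_g \to M_{fg} \to M_f \to 0$ in $\End(A)$ (corrected by a nilpotent block, of $\ch = 1$, in the degenerate case), whence $\tilde M_{fg} = \tilde M_f + \tilde M_g$; so $q \mapsto \tilde M_q$ is a homomorphism from the monoid $(1+tA[t],\times)$ to $\widetilde K_0(\End(A))$. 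As the rational power series are the group completion of this monoid, it extends to a homomorphism $s$ from rational power series to $\widetilde K_0(\End(A))$ with $\ch \circ s = \id$; in particular $s$ is a split monomorphism and $\widetilde K_0(\End(A)) \cong \operatorname{im}(s) \oplus \ker(\ch)$.

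It then remains to prove $s$ surjective, equivalently $\ker(\ch) = 0$, equivalently that the class of every $(A^n,T)$ lies in the subgroup generated by the companion endomorphisms $M_q$ — a ``rational canonical form in $K_0$'' statement. By Cayley--Hamilton, $A^n$ is a module over the finite free $A$-algebra $B := A[t]/(\chi_T)$; when $A^n$ is cyclic over $B$ one gets $(A^n,T) \cong M_{\ch(T)}$ outright, and over a field the cyclic decomposition reduces the general case to this one. Over a general ring there is no invariant-factor decomposition, and this is the crux: I would attack it by a dévissage over $A[t]$ — first reducing to $A$ Noetherian by a filtered-colimit argument (both sides commute with filtered colimits of rings), then inductively splitting off companion subquotients via the exact sequences above — which is essentially the route of Almkvist and of Grayson. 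Once this is in place, $s$ and $\ch$ are mutually inverse, yielding both the injectivity and the identification of the image.
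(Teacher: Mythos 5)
The paper does not prove this theorem; it cites Almkvist and Grayson and explicitly defers, remarking only that their proofs are ``a few pages long'' and ``very clever,'' with the field case reducing to rational canonical form. So there is no in-paper argument to match you against, and I will assess your proposal on its own terms. The well-definedness of $\ch$ by stabilizing with a projective complement, its additivity on short exact sequences via the splitting of the underlying module sequence, and the identification of the image with rational power series via companion matrices of the monic reciprocal polynomial $\hat p(t) = t^n p(1/t)$ are all correct and are indeed the standard observations. The construction of the section $s\colon q \mapsto [A[t]/(\hat q),\, t\cdot] - [A^{\deg q}, 0]$ and the resulting decomposition $\widetilde K_0(\End(A)) \cong \operatorname{im}(s) \oplus \ker(\ch)$ is a clean way to organize the problem. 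One small repair: to handle the degree-drop case you invoke that the extra nilpotent block has $\ch = 1$, but that is circular here since injectivity of $\ch$ is exactly what is at stake. The correct reason its class dies in $\widetilde K_0$ is the filtration by kernels of powers of the nilpotent, whose graded pieces are free with zero endomorphism, so $[A^k, N] = k[A,0]$ and the reduced class vanishes.

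The genuine gap is exactly where you flag it. The claim that every $[(P,T)]$ lies in the subgroup generated by companion-matrix classes \emph{is} the theorem, and your sketch does not close it. The short exact sequences you have in hand relate companion modules $M_f, M_g, M_{fg}$ to one another; they do not produce a companion subquotient of an arbitrary $(P,T)$. Over a field one has cyclic vectors and rational canonical form; over a general (even Noetherian) commutative ring there is no invariant-factor decomposition, and the reduction to Noetherian $A$ via filtered colimits, while valid, does not by itself supply one. ``Inductively splitting off companion subquotients'' is a hope, not an argument, and it is precisely the step that Almkvist and Grayson carry out with the resolution/localization machinery over $A[t]$ that the paper alludes to. So the proposal is sound and illuminating on the soft directions and on the reduction, but it leaves the crux as a pointer to the very sources the theorem is cited from.
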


The proofs in \cite{almkvist1} and \cite{grayson_endo} of \cref{thm:workhorse} are no more than a few pages long, and both are very clever. If one is only concerned with fields this is more or less a statement about the rational canonical form.

With this result in hand, we spend the rest of this section explaining the following diagram, which relates many standard constructions on Witt vectors:
\begin{equation}\label{big_diagram}
\begin{tikzcd}[row sep = 1.5cm]
  W_n (A) = A^{\times n}\ar{r}{\tau}\ar{dr}{\mathbf{gh}} \ar[bend left = 30]{rr} & \boxed{\widetilde{K}_0 (\End(A))} \ar{r}{ch}\ar{d}{\tr^{\bullet}} & (1 + A[[t]])^\times \ar{d}[swap]{t \frac{d}{dt} \log} \ar[bend right =50]{ll}\\
   & \prod A \ar{r}{\text{gen}}& A[[t]] \ar[bend right = 40]{u}
\end{tikzcd}
\end{equation}
A similar diagram, which also includes the ``necklace algebra'', appears in \cite[Eqn~1.7]{DS1}. 

Before explaining all of the objects and maps in the diagram, we note some structure on the central object, $\widetilde{K}_0 (\End(A))$.  Throughout, we will use $(P, f)$ to denote an endomorphism. All of the constructions below are \textit{categorical} constructions performed on the category $\End(A)$. The operations are extended to $\widetilde{K}_0 (\End(A))$ by additivity, unless otherwise noted.

First, $\End(A)$  is bimonoidal: it is equipped with binary operations  $\oplus$ and $\otimes$. These two operations give $\widetilde{K}_0 (\End(A))$ a ring structure.

There are also a number of functors on $\End(A)$ that descend to operators on $\widetilde{K}_0 (\End(A))$. 
\begin{defn}
$\End(A)$ is equipped with \textbf{Frobenius functors} $F^i: \End(A) \to \End(A)$. These are defined by
  \[
  F^i (P, f) \mapsto (P, f^i)
  \]
  where $f^n$ denotes $n$-fold composition.
\end{defn}

\begin{defn}
  $\End(A)$ is equipped with \textbf{Verschiebung functors} $V^i: \End(A) \to \End(A)$. These are defined by
  \[
  V^i (P, f) = (P^{\oplus i}, V^i(f))
  \]
  where we let $V^i(f)$ be the $(i \times i)$-matrix
  \[ V^i(f) =\begin{pmatrix}
    0 & 0 & \cdots & 0 & f\\
    1 & 0 & \cdots & 0 & 0 \\
    0 & 1 & \cdots & 0 & 0 \\
    \cdots & \ddots & \ddots & \vdots & \vdots \\
    0 & 0 & \cdots & 1 & 0
  \end{pmatrix}
  \]
\end{defn}

\begin{rmk}
  This can be pleasantly pictorially represented as a cyclically arrayed series
  \[
\begin{tikzpicture}[scale=.75]
  \node[sloped] (c1) at (30:2cm) {$P$};
  \node (c2) at (60:2cm) {$P$};
  \node (c3) at (90:2cm)  {$P$};
  \node (c4) at (120:2cm) {$P$};
  \node (c5) at (150:2cm) {$P$};
  \node (c6) at (180:2cm) {$P$};
  \draw[->] (35:2cm) arc (35:55:2cm) node[midway, above, sloped] {$f$};
  \draw[->] (65:2cm) arc (65:85:2cm) node[midway, above, sloped] {$1$};
  \draw[->] (95:2cm) arc (95:115:2cm) node[midway, above, sloped] {$1$};
  \draw[->] (125:2cm) arc (125:145:2cm) node[midway, above, sloped] {$1$};
  \draw[->] (155:2cm) arc (155:175:2cm) node[midway, above, sloped] {$1$};
  \draw[dashed] (185:2cm) arc (185:385:2cm);
  \node (plus) at (0,0) {$\bigoplus$}; 
\end{tikzpicture}
\]
One should interpret all of the $P$s in this diagram as summed together. Verschiebung applies $f$ to one of them, and then rotates. 
\end{rmk}

\begin{defn}\label{defn:mult_verb}
The category $\End(A)$ is equipped with a \textbf{multiplicative Verschiebung} or \textbf{Norm functor} given by the composite
  \[
  \begin{tikzcd}
    \displaystyle \bigotimes^n_{i=1} P \ar{rr}{f \otimes 1 \cdots \otimes 1} & & \displaystyle \bigotimes^n_{i=1} P \ar{rr}{\operatorname{rot}}& & \displaystyle \bigotimes^{n}_{i=1} P
  \end{tikzcd}
  \]
  where ``rot'' means rotate one click counterclockwise.
\end{defn}

\begin{rmk}
As in the case of the additive Verschiebung, it is more clear to represent this diagrammatically:
    \[
\begin{tikzpicture}[scale=.75]
  \node[sloped] (c1) at (30:2cm) {$P$};
  \node (c2) at (60:2cm) {$P$};
  \node (c3) at (90:2cm)  {$P$};
  \node (c4) at (120:2cm) {$P$};
  \node (c5) at (150:2cm) {$P$};
  \node (c6) at (180:2cm) {$P$};
  \draw[->] (35:2cm) arc (35:55:2cm) node[midway, above, sloped] {$f$};
  \draw[->] (65:2cm) arc (65:85:2cm) node[midway, above, sloped] {$1$};
  \draw[->] (95:2cm) arc (95:115:2cm) node[midway, above, sloped] {$1$};
  \draw[->] (125:2cm) arc (125:145:2cm) node[midway, above, sloped] {$1$};
  \draw[->] (155:2cm) arc (155:175:2cm) node[midway, above, sloped] {$1$};
  \draw[dashed] (185:2cm) arc (185:385:2cm);
  \node (plus) at (0,0) {$\bigotimes$}; 
\end{tikzpicture}
\]
This does \textit{not} descend to a group homomorphism $N: \widetilde{K}_0 (\End(A)) \to \widetilde{K}_0 (\End(A))$, but it does extend to a multiplicative map. That is, it is a monoid map, respecting the operation given by multiplication. 
\end{rmk}

\begin{rmk}
Although it will not play much of a role in this paper, it is worth noting that the category $\End(A)$ is equipped with exterior power operations $(P, f) \mapsto (\Lambda^j P, \lambda^j f)$. These give $\widetilde{K}_0 (\End(A))$ the structure of a $\lambda$-ring.
\end{rmk}

As we will see later, these category-level Frobenius, Verschiebung and Norm govern the behavior of the usual Frobenius, Verschiebung and Norm. The $\lambda$-operations give the $\lambda$-ring structure on the Witt vectors. 

We now define the maps in the diagram. We begin with the central map. 

\begin{defn}
  The map
  \[
  \begin{tikzcd}
    \tr^\bullet : \widetilde{K}_0 (\End(A)) \ar{r} &\displaystyle \prod A
  \end{tikzcd}
  \]
  is given by
  \begin{align*}
    \tr^\bullet (f) &= (\tr(f), \tr(f^2), \tr(f^3), \dots)  \in \displaystyle \prod A \\
    &= (\tr(f), \tr(F^2 (f)), \tr(F^3 (f)), \dots)
  \end{align*}
  It is simply a map of the traces of iterates of $f$. 
\end{defn}

\begin{rmk}\label{mult_on_ghost}
Unlike many of the maps in the diagram, it is very easy to see that $\tr^\bullet$ is a ring map, if we define $\prod A$ to have pointwise addition and multiplication: the trace is both additive in direct sum and multiplicative in tensor product. This is one of the great virtues of giving this map a central role. 
\end{rmk}

The map $W_n (A) \to \End(A)$ is a composite of two maps. There is an obvious embedding $A \hookrightarrow \End(A)$ given by sending $a$ to the endomorphism given by multiplication by $a$:
\[
a : A \to A  = V^1(a) \in \End (A)
\]

We also have the following
\begin{defn}
  The \textbf{$n$th Teichm\"uller map}  $\tau_n : A \to \widetilde{K}_0 (\End(A))$ is given by
  \[
  \tau_n (a) = V^n (a) =  V^i(f) =\begin{pmatrix}
    0 & 0 & \cdots & 0 & a\\
    1 & 0 & \cdots & 0 & 0 \\
    0 & 1 & \cdots & 0 & 0 \\
    \cdots & \ddots & \ddots & \vdots & \vdots \\
    0 & 0 & \cdots & 1 & 0
  \end{pmatrix}
  \]
\end{defn}

\begin{defn}
  The \textbf{general Teichm\"{u}ller map} $\tau : W_n (A) = A^{\times n} \to \widetilde{K}_0 (\End(A))$ is given by
  \[
  \tau(a_1, \dots, a_n) = V^1 (a_1) \oplus V^2 (a_2) \oplus \cdots \oplus V^n (a_n)
  \]
\end{defn}

\begin{rmk}
This may seem a bit \textit{ad hoc}, but it is exactly this linear algebraic maneuver that yields all of the identities about Witt vectors. We will resolve some of the \textit{ad hoc} nature below. 
\end{rmk}

\begin{example}

  Considering the composite $\mathbf{gh}:= \tr^\bullet \circ \tau$ resolves some of the mystery around the usual definition of the ghost map.
  
  For example, consider what happens to $(a_1, a_2, a_3,a_4,a_5,a_6)$ under $\tr^\bullet \circ \tau$. Trace is additive on direct sum, so we consider each $V^i (a_i)$ separately
  \begin{align*}
    &\tr^\bullet (V_1(a_1)) &=  &a_1, &a^2_1,& &a^3_1,&  &a^4_1,&  &a^5_1 & &a^6_1&\dots\\
    &\tr^\bullet (V_2(a_2)) &=  &0  , &2a_2, & &0 ,  &  &2a^2_2,& &0 & & 2a^3_2&\dots\\
    &\tr^\bullet (V_3 (a_3)) &= &0,   &0,   &  &3a_3,&  &0,   &  &0 & & 3a^2_3&\dots\\
    &\tr^\bullet (V_4 (a_4)) &= &0,    &0,  &  &0,  &  &4a_4, &  &0 & & 0 &\dots\\
    &\tr^\bullet (V_5 (a_5)) &= &0,    &0,  &  &0,  &  &0     &  & 5a_5 & & 0 & \dots\\
    &\tr^\bullet (V_6 (a_6)) &= &0,    &0,  &  &0,  &  &0     &  & 0    & & 6a_6 & \dots\\
  \end{align*}
  Adding these vertically, using the component-wise sum on $\prod A$ we get
  \[
  \mathbf{gh} ((a_1, \dots, a_6)) = \left(a_1, a^2_1 + 2a_2, a^3_1 + 3a_3, a^4_1 + 2 a^2_2 + 4a_4, a^5_1 + 5a_5, a^6_1 + 2a^3_2 + 3a^2_3 + 6a_6, \dots  \right)
  \]
\end{example}

This gives a clear way of remembering the familiar definition of the ghost coordinates, which usually seem to come from nowhere
\[
\operatorname{gh}_n = \sum_{d|n} d a^{n/d}_d. 
\]

\begin{example}\label{example:ab_example}
  From such a table one sees that the ghost map is not necessarily surjective, and how to reconstruct a Witt vector from its coordinates. It even offers a way of doing it that is not much more difficult than long division, it is essentially ``$V$-division''

  Suppose we want to multiply $V^2(a) = (0,a)$ and $V^2(b)=(0,b)$ in Witt coordinates. In ghost coordinates, we see that their product is
  \begin{align*}
    0 && 4ab && 0 && 4a^2b^2&& 0 && 4a^3b^3 && 0 && 4a^4b^4.
  \end{align*}
  We want to find an element in $W(A)$ whose ghost coordinates are exactly this sequence. We begin the ``long division'' by finding $c$ such that $\Tr^2(V^2(c)) = 4ab$. But $\Tr^2 (V^2(c)) = 2c$, so $c = 2ab$. Now we compute iterated traces of $V^2(2ab)$ and subtract
    \begin{align*}
      &&  0 && 4ab && 0 && 4a^2b^2&& 0 && 4a^3b^3 && 0 && 4a^4b^4\\
      -  && 0 && 4ab  && 0 && 2 (2ab)^2 && 0 && 2 (2ab)^3 && 0 && 2(2ab)^4\\\hline
      && 0 && 0       && 0 && -4a^2b^2 && 0 && -12a^3b^3 && 0 && - 28a^4b^4  
    \end{align*}
    Continuing, we need $c$ such that $\tr^4(V^4 (c)) = 4c =  -4a^2 b^2$, so $c = -a^2 b^2$. And so on. If one continues in this way, one produces the necessary universal polynomials. This example is also worked out in \cite{grayson_witt} using the characteristic polynomial. 
\end{example}

Dwork's Lemma, which we state below, is also a consequence of these combinatorics. It is a formal statement of which elements the procedure above will work on (i.e. when we can evenly divide coefficients as we did above). 

\begin{prop}
  A sequence $(w_1, w_2, \dots)$ is in the image of the ghost map if
  \[
  w_{p^n a} \equiv w^{p}_{p^{n-1} a} \ \text{mod} \ p^n 
  \]
  for each prime $p$ and $\operatorname{gcd}(p, m) = 1$. 
\end{prop}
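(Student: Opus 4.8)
The plan is to read ``in the image of the ghost map'' as ``admits Witt coordinates'' and to run the ``$V$-division'' algorithm of \cref{example:ab_example}, using the hypothesised congruences only to certify that each division occurring in that algorithm can be carried out over $A$. So: put $a_1 := w_1$, and, having constructed $a_1,\dots,a_{n-1}\in A$, form $\mathbf v^{(n-1)} := (w_1,w_2,\dots) - \mathbf{gh}(a_1,\dots,a_{n-1},0,0,\dots)\in\prod A$, whose first $n-1$ coordinates vanish by construction; one wants to set $a_n := v^{(n-1)}_n/n$. The whole proof reduces to showing $n$ divides $v^{(n-1)}_n$ in $A$ at every stage: granting this, the truncations $(a_1,\dots,a_n,0,\dots)$ converge $V$-adically to some $\mathbf a\in W(A)$, and the evident compatibility of $\mathbf{gh}$ with the $V$-filtrations forces $\mathbf{gh}(\mathbf a) = (w_1,w_2,\dots)$. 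A point to dispatch first is the meaning of the congruences: ``$w^p_k$'' must be interpreted as $\phi_p(w_k)$ for a chosen Frobenius lift $\phi_p\colon A\to A$ (a ring endomorphism with $\phi_p(x)\equiv x^p\bmod pA$) --- or, equivalently, one restricts to $\mathbb Z$-torsion-free $A$ and runs the argument universally; I would make this explicit.

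The divisibility becomes transparent after two observations. First, the set $D\subseteq\prod A$ of sequences satisfying Dwork's congruences is a \emph{subring} of $\prod A$: this is immediate, since each $\phi_p$ is a ring homomorphism carrying $p^kA$ into $p^kA$. Second, $\operatorname{im}(\mathbf{gh})\subseteq D$ --- this is the ``only if'' half of Dwork's lemma, and it is the one point demanding a genuine computation. I would deduce it from the elementary lemma that $x\equiv y\bmod p^mA$ with $m\ge 1$ implies $x^p\equiv y^p\bmod p^{m+1}A$ (expand the binomial; each error term past the first carries $p^{2m}$, and $2m\ge m+1$), iterated to give $\phi_p(a)^{p^{j-1}}\equiv a^{p^j}\bmod p^jA$; then, writing the divisors of $p^kb$ (with $p\nmid b$) as $p^ie$ with $0\le i\le k$ and $e\mid b$, the $i=k$ summand of $\mathbf{gh}_{p^kb}$ already carries a factor $p^k$, while each $i<k$ summand contributes $p^i$ times a difference that is divisible by $p^{k-i}$ by the iterated congruence. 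Combining the two observations, $\mathbf v^{(n-1)} = (w_\bullet)-\mathbf{gh}(a_1,\dots,a_{n-1},0,\dots)$ lies in $D$.

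Now the crux is one line. Fix a prime $p$ with $p^k$ exactly dividing $n$. Applying the Dwork congruence of $\mathbf v^{(n-1)}\in D$ at $p$, with the prime-to-$p$ part of $n$ as the auxiliary index, gives $v^{(n-1)}_n\equiv\phi_p\bigl(v^{(n-1)}_{n/p}\bigr)\bmod p^k$; but $n/p$ is an integer strictly between $0$ and $n$, so $v^{(n-1)}_{n/p}=0$, whence $p^k\mid v^{(n-1)}_n$. Since $nA=\bigcap_{p\mid n}p^{v_p(n)}A$ --- a one-line Bézout argument, using that $\gcd(a,b)=1$ forces $aA\cap bA=abA$ --- we conclude $n\mid v^{(n-1)}_n$, so $a_n:=v^{(n-1)}_n/n\in A$ is defined and the induction proceeds.

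The real obstacle is the inclusion $\operatorname{im}(\mathbf{gh})\subseteq D$: once that fiddly but wholly elementary $p$-adic bookkeeping is in place, the construction and its termination are formal. The only other items needing care are the precise reading of the congruences (the Frobenius-lift / torsion-free caveat above) and the verification that the $V$-adic limit $\mathbf a$ has the prescribed ghost vector, which is routine from the compatibility of $\mathbf{gh}$ with the $V$-filtrations.
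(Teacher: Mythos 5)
The paper does not prove this statement (it is the classical Dwork Lemma), only remarking beforehand that it follows from the $V$-division combinatorics; so there is no proof to compare against, but the argument you have written is the standard one and it is correct. The caveat you raise about reading $w^p_k$ is essential, not a matter of taste. Taken literally as a $p$th power, the proposition is false: over $A = \mathbb Z$, any sequence beginning $(0, 2, 0, 0, \dots)$ and obeying the $p$th-power congruences satisfies $w_2\equiv w_1^2\bmod 2$ and $w_4\equiv w_2^2\equiv 0\bmod 4$, yet $4a_4 = w_4 - 2a_2^2 = -2$ has no solution in $\mathbb Z$. With the Frobenius-lift reading --- the identity on $\mathbb Z$ --- the relevant congruence becomes $w_4\equiv w_2\bmod 4$, which correctly rejects this sequence. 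Note also that your fallback ``restrict to torsion-free $A$ and run universally'' is not by itself a repair: the counterexample lives over $\mathbb Z$. The only genuine fix is to fix Frobenius lifts $\phi_p$, and if one wants to argue universally, that means working over $\mathbb Z[x_1, x_2, \dots]$ with $\phi_p\colon x_i\mapsto x_i^p$. With the Frobenius-lift reading in place, your three ingredients --- the Dwork sequences form a subring (because each $\phi_p$ is a ring map preserving $p^kA$), $\operatorname{im}(\mathbf{gh})$ lies in that subring (the honest calculation, via the lifting-the-exponent lemma and splitting divisors of $p^kb$ by $p$-adic valuation), and $n\mid v_n^{(n-1)}$ assembled prime-by-prime using $nA=\bigcap_{p\mid n}p^{v_p(n)}A$ --- are exactly the right ones, and your sketches of each are sound.
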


We continue with the rest of the maps in \cref{big_diagram}.

The top map is, of course, the characteristic polynomial. The ring structure on $(1 +A[[t]])^\times$ is assumed to have the structure that makes the characteristic polynomial a ring map, i.e. the Witt structure.

\begin{defn}
  The map $\ch: \widetilde{K}_0 (\End(A)) \to (1+A[[t]])^\times$ is given by a particular normalization of the characteristic polynomial
  \[
  \operatorname{ch}(P,f) = \det (\id - t f)
  \]
\end{defn}

\begin{rmk}
  There are 8 normalization of the characteristic polynomial:
  \[
  \det (\id \pm tf)^{\pm}, \  \det (t\id \pm f)^{\pm}
  \]
  These differences lurk in the background and make for confusing disparities between different treatments of Witt vectors. 
\end{rmk}

We have now defined two maps out of $\widetilde{K}_0 (\End(A))$: one defined by iterated traces, the other defined by the characteristic polynomial. The bottom horizontal map and the right vertical map relate them.  The bottom map in \cref{big_diagram} is simple to define. 

\begin{defn}
  The map $\mathbf{gen}: \prod A \to A[[t]]$ is the generating function map:
  \[
  (a_1, a_2, a_3, \dots) \mapsto \sum a_i t^i
  \]
\end{defn}

\begin{defn}
  The right vertical map is the \textbf{logarithmic derivative}
  \[
  p(t) \mapsto  t \frac{d}{dt} \log (p (t)) =  t \frac{p'(t)}{p(t)}
  \]
\end{defn}

This forces the definition of the upward map.

\begin{defn}
  The right vertical (upward) map is the inverse of the logarithmic derivative, i.e.
  \[
  g(t) \mapsto \exp\left(\int \frac{1}{t} g(t) \ dt\right)
  \]
\end{defn}

\begin{prop}
The diagram in \cref{big_diagram} commutes. 
\end{prop}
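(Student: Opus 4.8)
The plan is to verify commutativity of \cref{big_diagram} by checking each of the three "cells" separately, reducing everything to the case of a single Teichm\"uller generator $V^n(a)$ and then invoking additivity/multiplicativity of all the maps involved. Since $W_n(A)$ is generated under $\oplus$ by the elements $V^i(a_i)$, and since $\tau$, $\ch$, $\tr^\bullet$, $t\frac{d}{dt}\log$, and $\mathbf{gen}$ are all either additive or turn sums into products compatibly (the outer ring $(1+A[[t]])^\times$ has addition = multiplication of series, and $\prod A$ and $A[[t]]$ have coordinatewise/series addition), it suffices to trace a single $V^n(a)$ around each triangle and around the square.

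First I would handle the right-hand square: $t\frac{d}{dt}\log \circ \ch = \mathbf{gen}\circ \tr^\bullet$. For a single generator, $\ch(V^n(a)) = \det(\id - tV^n(a))$, which one computes by cofactor expansion to be $1 - at^n$. Then $t\frac{d}{dt}\log(1-at^n) = t\cdot\frac{-nat^{n-1}}{1-at^n} = -nat^n(1 + at^n + a^2t^{2n}+\cdots) = -\sum_{k\geq 1} na^k t^{nk}$. On the other side, $\tr^\bullet(V^n(a)) = (\tr(V^n(a)), \tr(V^n(a)^2),\dots)$; since $V^n(a)^k$ has trace $0$ unless $n\mid k$, in which case $V^n(a)^{mn}$ acts as multiplication by $a^m$ on each of the $n$ summands so its trace is $na^m$, we get the sequence with $na^m$ in spot $nm$ and zero elsewhere. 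Applying $\mathbf{gen}$ gives $\sum_m na^m t^{nm}$. (The sign discrepancy is absorbed by the chosen normalization convention; I would either note that $\ch$ should be read with the sign making these agree, or record the sign as the price of the $\det(\id-tf)$ normalization — in any case it is a fixed global sign, not an obstruction to commutativity once conventions are pinned down.) Summing over generators, the square commutes because both composites send $\oplus$ to $+$ of series, matching the fact that $\tr^\bullet$ and $\ch$ are both (ring, resp.\ group) homomorphisms.

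Next the lower-left triangle: $\mathbf{gh} = \tr^\bullet\circ\tau$ is literally the \emph{definition} of $\mathbf{gh}$ given in the excerpt, so that cell commutes by fiat; the content there is the worked example showing it reproduces $\mathrm{gh}_n = \sum_{d\mid n} d\,a_d^{n/d}$, which follows by summing the single-generator computation above over all $i$ dividing $n$. The remaining (outer, curved) arrows — the bent map $W_n(A)\to(1+A[[t]])^\times$ and the bent maps going backwards — are defined precisely as the relevant composites ($\ch\circ\tau$, and the inverses of the vertical/horizontal maps restricted to the appropriate image), so the only genuine claims to check are (a) $\ch\circ\tau$ agrees with the standard embedding $W_n(A)\hookrightarrow(1+A[[t]])^\times$ sending $(a_1,\dots,a_n)\mapsto\prod_i(1-a_it^i)$, which is immediate from $\ch(V^i(a_i))=1-a_it^i$ and multiplicativity of $\ch$ under $\oplus$; and (b) that the backward bent maps are genuinely inverse on the relevant subobjects, which is where one uses \cref{thm:workhorse} (injectivity of $\ch$ with image the rational power series) together with the observation that the image of $\tau$ and of $\tr^\bullet$ land in the appropriate polynomial/rational loci.

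I expect the main obstacle to be purely bookkeeping: pinning down the eight sign/normalization conventions for the characteristic polynomial so that all arrows literally commute rather than commute up to a sign, and correctly handling the fact that $(1+A[[t]])^\times$ carries addition-as-multiplication (so "additivity of $\ch$" means $\ch(T_1\oplus T_2)=\ch(T_1)\ch(T_2)$). Once those are fixed, every cell reduces to the single computation $\ch(V^n(a)) = 1-at^n$ and its consequences under $t\frac{d}{dt}\log$, and there is no substantive difficulty — no limits, no $\varprojlim^1$, no exactness subtleties — since we are working at the level of $W_n(A)$ for fixed finite $n$ and all maps in sight are explicit. I would organize the write-up as: (1) compute $\ch(V^n(a))$ and $\tr^\bullet(V^n(a))$; (2) check the square on generators; (3) observe the triangle is a definition; (4) identify the bent maps with the asserted composites/inverses using \cref{thm:workhorse}; (5) conclude by additivity.
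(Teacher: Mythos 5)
Your proposal is correct and is essentially the paper's own proof: reduce to a single Teichm\"uller generator $V^n(a)$, compute $\ch(V^n(a)) = 1 - at^n$ and $\tr^\bullet(V^n(a))$ (supported in spots $n, 2n, 3n, \dots$ with values $na, na^2, \dots$), then match these under the logarithmic derivative and generating-function maps and extend by (bi)additivity; the triangle is a definition of $\mathbf{gh}$ and the bent arrows are the obvious composites and inverses, just as you note. You are in fact more careful than the paper on one point: with the stated normalization $\ch(P,f)=\det(\id - tf)$ one gets $t\frac{d}{dt}\log(1-at^n) = -\sum_{k\ge 1} na^k t^{nk}$, a global minus sign against $\mathbf{gen}\circ\tr^\bullet = +\sum_{k\ge 1} na^k t^{nk}$; the paper's displayed computation silently drops that sign in its final step, and your diagnosis that this is a fixed normalization discrepancy (eliminated by taking $\ch = \det(\id - tf)^{-1}$, or by using $-t\frac{d}{dt}\log$) rather than a failure of commutativity is correct and worth stating explicitly.
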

\begin{proof}
  To prove commutativity of the diagram, it suffices to check it on $V^i (a_i)$.

  First, the iterated traces of $V^i (a_i)$ are
  \[
  (\underbrace{i a_i}_{ith\ \text{spot}}, 0,\dots,0, \underbrace{i a^2_i}_{2ith \ \text{spot}},0, \dots, 0, \underbrace{i a^3_i}_{3ith \ \text{spot}}, \dots) 
  \]
Translating the iterated traces into generating functions, we obtain the series
  \[
  \sum i a^n_i t^{in}
  \]
  (recall that $a^n_i$ is in the $in$th spot, hence the coefficient on $t$).

  Going the other direction around the diagram, the characteristic polynomial of $V^i (a_i)$ is $(1 - a_i t^i)$
and the logarithmic derivative of the characteristic polynomial is
  \begin{align*}
    t\frac{d}{dt} \log (1-a_i t^i) &= t \frac{d}{dt} \log (1-a_i t^i)
    = t \left(\frac{-i a_i t^{i-1}}{(1-a_i t^i)}\right)
    = -ia_i t^i \left(\frac{1}{1-a_i t^i}\right)\\ &=-ia_i t^i \left(1 + a_i t^i + a^2_i t^{2i} + a^3_i t^{3i} + \cdots \right)
    = ia_i t^i + ia^2_i t^{2i} + ia^3_i t^{3i} + \cdots = \sum ia^n_i t^{in}
  \end{align*}
and we are done. 
\end{proof}

Summarizing, we have seen that the ghost maps come from considering traces of particular matrices. The structure of Witt multiplication are forced on us by a desire to have the characteristic polynomial be a ring map.

\section{The $p$-typical Witt vectors}\label{sec:p_typical}

The case of $p$-typical Witt vectors is no more difficult, and illuminated by thinking about the Witt vectors as recording traces. The $p$-typical Witt vectors are obtained by formally declaring that we only care about $p$-power traces of some endomorphism. To this end, we only use the Verschiebung operations $\{V^p, V^{p^2}, V^{p^3}, \dots\}$ and Frobenius $\{F^p, F^{p^2}, F^{p^3}, \dots\}$. We note that in the literature on $p$-typical Witt vectors these maps are referred to by their $p$-exponent, so $V_{p^i}$ is usually written $V_i$.

\begin{defn}
The \textbf{general Teichm\"{u}ller map} $ \tau: W_{p^n} (A) \to \widetilde{K}_0 (\End(A))$ is given by
\begin{equation}\label{teichmuller}
\tau(a_0, a_1, a_2, \dots,a_n) =  \oplus V^1 (a_0) \oplus V^p(a_1) \oplus V^{p^2}(a_2) \oplus \cdots V^{p^n} (a_n)
\end{equation}
\end{defn}

As is the case with the big Witt vectors, the ghost maps can be computed via traces. The difference is that we only care about traces whose order is a power of the prime.

\begin{example}
  Computing iterated traces in \cref{teichmuller}, we obtain
  \begin{align*}
    a_0 && a^p_0 && a^{p^2}_0 && a^{p^3}_0 && \cdots \\
    0   && pa_{1} && pa^{p}_1 && p a^{p^2}_1 &&\cdots \\
    0 && 0 && p^2 a_2 && p^2 a^{p}_2 && \cdots \\
    0 && 0 && \vdots && \vdots && \ddots
  \end{align*}
 and summing down columns, we get the ghost maps  which are
\[
gh_n(a_0, a_1, \dots) = \sum^n_{i=0} p^i a^{p-i}. 
\]
\end{example}

We can recover the universal polynomials again, by a relatively painless $V$-division algorithm. 

\begin{example}
  We compute the product of $(a_0, a_1)$ and $(b_0, b_1)$ to compute the first few universal polynomials. First, compute the ghost coordinates using the methods above, and then begin the $V$-division:
  \begin{align*}
    && a_0 b_0 && (a^p_0 + pa_1)(b^p_0 + pb_1) &&\\
    V^1(a_0b_0) && -a_0 b_0 && -a^p_0 b^p_0 && \\\hline
     && 0 && p a^p_0 b_1 + p a^p_1 b_0 + p^2 a_1 b_1 && \\
  \end{align*}
  So we need a $c$ such that $\tr^p (V^p(c)) = p c =  p a^p_0 b_1 + p a^p_1 b_0 + p^2 a_1 b_1$. Indeed, this is the universal Witt polynomial
  \[
   a^p_0 b_1 + a^p_1 b_0 + p a_1 b_1 
  \]
\end{example}

Here we get to a deficiency of this approach (and of all approaches based on ghost maps): the expressions involving traces do not make sense when there is $p$-torsion. This is in some sense the point of Witt vectors: they mimic the combinatorics of traces, without necessarily mentioning traces.

\begin{example}
  In the $p$-typical case something interesting happens for the characteristic polynomial. As far as the author knows, there is no ``characteristic polynomial'' defined linear algebraically which only extracts the data of $p$-power traces. Instead, working around \cref{big_diagram}, we see that the ``characteristic polynomial'' of $V^1(a)$ in this case is the Artin-Hasse exponential
  \[
  E(a) = \exp \left(at + \frac{a^pt^p}{p} + \frac{a^{p^2}t^{p^2}}{p^2} + \cdots \right)
  \]
To the author, at least, this explains the appearance of such a bizarre expression. If we do this with the other Verschiebungs, we get $E(V^{p^i}(a_i))$ is the same as $E(a_i)$ after performing the ring automorphism $t \mapsto t^{p^i}$. But this is the $i$th Verschiebung in the $p$-typical situation! Thus, from consideration of traces, we get a composite
\[
\begin{tikzcd}
\displaystyle\prod A \ar{r} & \widetilde{K}_0 (\End(A)) \ar{r} & (1+A[[t]])^\times 
\end{tikzcd}
\]
given by 
\[
(a_0, a_1, \dots,a_n) \mapsto V^{p^0}(a_0) \oplus V^{p^1} (a_1) \oplus \cdots \mapsto  \prod^n E(a_i X^{p^i}). 
\]
If we remove the central $\widetilde{K}_0 (\End(A))$ so that there are no finiteness assumptions, we obtain
\[
(a_0, a_1, \dots) \mapsto \prod E(a_i X^{p^i}) 
\]
which is a map whose image in the big Witt vectors is isomorphic to the $p$-typical Witt vectors. 

To reiterate, this was all obtained by just declaring, somewhat formally, that we only care about the traces of order $p^i$. 

\end{example}

\section{Identities, Old and New}\label{sec:verification}

In this section we present some of the standard identities on the big Witt vectors, and $p$-typical Witt vectors. An interesting consequence of the view we take is that the identities are often verified on a \textit{categorical} level, and one can always tell whether the operation is additive, multiplicative, etc. The typical procedure is to verify identities by showing that they agree on either all iterated traces, or on the characteristic polynomial; whichever is easier.  For this section, we will verify identities using traces. This requires us to assume a torsion-free (or $p$-torsion free) ring throughout this section. However, the usual Witt vector tricks work to port the identities: we invoke the functoriality of $\widetilde{K}_0 (\End(A))$  and the density of the rational Witt vectors in the Witt vectors to extend the results.  

\subsection{Old Identities}

As warm-up exercise in the use of the $K$-theory of endomorphisms, we prove some standard relationships between Frobenius and Verschiebung. None of these proofs are necessarily easier than the usual proofs that involve computing in ghost coordinates, but they are a bit more transparent. For discussion of the norm, the style of proof used below \textit{is} easier. 

\begin{prop}
In in $\widetilde{K}_0 (\End(A))$, $F^n V^n = n$
\end{prop}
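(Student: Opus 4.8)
The plan is to compute $F^n V^n(P,f)$ directly at the categorical level using the explicit matrix description of the Verschiebung functor, and then show the resulting virtual endomorphism equals $n$ times the identity class in $\widetilde{K}_0(\End(A))$. Recall that $V^n(P,f) = (P^{\oplus n}, V^n(f))$ where $V^n(f)$ is the companion-type matrix with $f$ in the upper-right corner, $1$'s on the subdiagonal, and $0$'s elsewhere; and $F^n(Q,g) = (Q, g^n)$. So the first step is to understand the $n$-fold composite $V^n(f)^n$ as an endomorphism of $P^{\oplus n}$.

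Concretely, I would observe that the matrix $V^n(f)$ is exactly the "cyclic shift twisted by $f$" pictured in the remark: it sends the $k$-th summand of $P^{\oplus n}$ to the $(k+1)$-st (indices mod $n$), applying $f$ when it wraps around from the $n$-th summand back to the first. Composing this with itself $n$ times returns each summand to itself, having applied $f$ exactly once along the way. Thus $V^n(f)^n$ is the block-diagonal endomorphism $f \oplus f \oplus \cdots \oplus f$ on $P^{\oplus n}$, i.e. $F^n V^n(P,f) = (P^{\oplus n}, f^{\oplus n}) = (P,f)^{\oplus n}$ as objects of $\End(A)$. Since the class of a direct sum is the sum of the classes, this gives $[F^n V^n(P,f)] = n[(P,f)]$ in $K_0(\End(A))$, and since both sides lie in (and the computation respects) the summand $\widetilde{K}_0(\End(A))$, we get $F^n V^n = n$ as an operator, where "$n$" means multiplication by the integer $n$, equivalently $n \cdot \id$ on the abelian group $\widetilde{K}_0(\End(A))$.

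The cleanest way to nail the matrix computation without grinding through an explicit $n \times n$ multiplication is to use the basis-free description: write $P^{\oplus n} = \bigoplus_{k \in \mathbb{Z}/n} P_k$ with each $P_k = P$, let $\iota_k, \pi_k$ be the structure maps, and note $V^n(f) = \sum_{k \in \mathbb{Z}/n} \iota_{k+1} \circ \phi_k \circ \pi_k$ where $\phi_k = \id_P$ for $k \neq n-1$ (using representatives $0,\dots,n-1$) and $\phi_{n-1} = f$. Composing $n$ such shift maps, the only surviving terms are those that traverse the full cycle once, contributing $\iota_k \circ f \circ \pi_k$ for each $k$ (the single $f$ coming from the unique wrap-around), so $V^n(f)^n = \sum_k \iota_k \circ f \circ \pi_k = f^{\oplus n}$.

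I do not expect a serious obstacle here — this is essentially bookkeeping with the companion matrix. The one point requiring a little care is the indexing/mod-$n$ arithmetic in the composite, making sure that exactly one factor of $f$ (not zero, not more) appears after $n$ applications; a quick check of the $n = 2$ or $n = 3$ case against the displayed matrix confirms the pattern. I would also remark that this matches, and reproves categorically, the classical identity $F^n V^n = n$ on $W(A)$ via the compatibility of these functors with the maps in \cref{big_diagram} (trace is additive, so $\tr^\bullet$ of $f^{\oplus n}$ is $n$ times $\tr^\bullet(f)$, recovering the ghost-coordinate proof).
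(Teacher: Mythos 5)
Your proof takes essentially the same route as the paper's: both compute the $n$th power of the companion matrix $V^n(f)$ to obtain the block-diagonal $f^{\oplus n}$, hence $F^nV^n(P,f) \cong (P,f)^{\oplus n}$. If anything your concluding step is slightly cleaner — you pass directly to $n[(P,f)]$ via additivity of $K_0$ on direct sums, whereas the paper closes with ``upon taking traces we get the result,'' which is unnecessary once the isomorphism of endomorphisms is in hand.
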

\begin{proof}
  The operator $F^n$ simply raises a matrix to the $n$th power, and we've written down the Verschiebung matrix. 
  
  We compute
  \[
  F^n V^n = \begin{pmatrix}
    0 & 0 & \cdots & 0 & f\\
    1 & 0 & \cdots & 0 & 0 \\
    0 & 1 & \cdots & 0 & 0 \\
    \cdots & \ddots & \ddots & \vdots & \vdots \\
    0 & 0 & \cdots & 1 & 0    
  \end{pmatrix}^n =
  \begin{pmatrix}
    f & 0 & \cdots & 0 & 0\\
    0 & f & \cdots & 0 & 0 \\
    0 & 0 & \cdots & 0 & 0 \\
    \cdots & \ddots & \ddots & \vdots & \vdots \\
    0 & 0 & \cdots & 0 & f
  \end{pmatrix} = f^{\oplus n}
  \]
  and upon taking traces we get the result. 

\end{proof}

More precisley, we have the following lemma, which we will frequently use. 

\begin{lem}\label{lem:v_lemma}
  For $(V, f) \in \End(A)$
  \[
  \tr((V^d (f))^n) = \tr(F^n (V^d f) =
  \begin{cases}
    d \tr(f^{n/d}) & d | n \\
    0 & \text{otherwise}
  \end{cases}
  \]
\end{lem}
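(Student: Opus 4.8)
The plan is to compute the $n$-th power of the Verschiebung matrix $V^d(f)$ directly and then read off the trace. Recall $V^d(f)$ is the $d\times d$ block matrix acting on $P^{\oplus d}$ that cyclically permutes the $d$ copies of $P$ while applying $f$ on exactly one of the $d$ arcs (the pictorial description given just before this lemma). Concretely, if we index the copies of $P$ by $\mathbb{Z}/d$, then $V^d(f)$ sends the summand in slot $k$ to the summand in slot $k+1$, via the identity for $k \neq d-1$ and via $f$ for $k = d-1$. The key structural observation is that $V^d(f)$ is a ``twisted cyclic shift'': composing it with itself advances the slot index and accumulates a power of $f$ each time the distinguished arc is traversed.

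The main computation is then the following. Iterating $V^d(f)$ a total of $n$ times sends slot $k$ to slot $k+n \pmod d$, with accompanying endomorphism $f^{e}$, where $e$ is the number of times the orbit $k \to k+1 \to \cdots \to k+n$ passes the distinguished arc, i.e. $e = \lfloor (k+n)/d \rfloor - \lfloor k/d \rfloor$ (counting wraparounds). The trace of $(V^d(f))^n$ picks out exactly the diagonal blocks, i.e. those $k$ with $k + n \equiv k \pmod d$, which is nonempty precisely when $d \mid n$. If $d \nmid n$, every block is off-diagonal, so $\tr((V^d(f))^n) = 0$. If $d \mid n$, then for each of the $d$ slots $k \in \mathbb{Z}/d$ the orbit returns to slot $k$ having wrapped around $n/d$ times, hence passed the distinguished arc exactly $n/d$ times, so the diagonal block at slot $k$ is $f^{n/d}$; summing the $d$ blockwise traces gives $d\,\tr(f^{n/d})$. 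This is exactly the claimed formula. The identification $\tr((V^d(f))^n) = \tr(F^n(V^d(f)))$ is immediate from the definition $F^n(P,g) = (P, g^n)$.

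The one place requiring a little care — and the step I expect to be the main (minor) obstacle — is bookkeeping the exponent of $f$ correctly when $d \mid n$: one must check that \emph{every} cyclic orbit of length $n$ passes the single distinguished arc exactly $n/d$ times, independent of the starting slot $k$. This is clear once one notes that over $n/d$ full loops around the $d$-cycle, each of the $d$ arcs is traversed exactly $n/d$ times, and only one arc carries $f$; but it is the kind of off-by-one that is easy to state loosely. An alternative, entirely equivalent route avoiding block-matrix indexing is to observe that $(V^d(f))^d = f^{\oplus d}$ (which the preceding proposition already records in the case needed) and that $(V^d(f))^m$ for $0 < m < d$ is a nontrivial permutation of the summands composed with some identity/$f$ maps, hence has zero trace; writing $n = qd + r$ with $0 \le r < d$ then gives $(V^d(f))^n = (f^{\oplus d})^{q}\circ (V^d(f))^r = (f^q)^{\oplus d}\circ(V^d(f))^r$, whose trace is $d\,\tr(f^q) = d\,\tr(f^{n/d})$ if $r = 0$ and $0$ otherwise. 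I would present the argument in this second form, since it reduces everything to the already-proved relation $F^dV^d = d$ at the level of the underlying endomorphism and a one-line observation about traces of permutation-type matrices.
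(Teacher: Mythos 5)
Your proof is correct and takes essentially the same approach as the paper's: both arguments reduce to the direct observation that the block matrix $(V^d(f))^n$ has zero block-diagonal unless $d \mid n$, in which case every diagonal block equals $f^{n/d}$. Your second packaging (writing $n = qd + r$ and invoking $(V^d(f))^d = f^{\oplus d}$) is a marginally tidier way to organize the same computation that the paper performs by inspection of the matrix $F^n V^d(f)$.
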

\begin{proof}
The endomorphism $F^n V^d (f)$ has 0 diagonal, unless $d |n$. In that case there are $d$ copies of the same endomorphism on the diagonal:
\[
\begin{pmatrix}
f^{n/d} & 0 & \cdots & 0 \\
0 & f^{n/d} & \cdots & 0 \\
\cdots & \cdots & \ddots & \vdots\\
0 & 0 &\cdots & f^{n/d} 
\end{pmatrix} = (f^{n/d})^{\oplus d} = d f^{n/d} 
\]
\end{proof}

The lemma generalizes to the following identity, which will be important in \cref{sec:presentation}. 

\begin{lem}\label{v_f_relation}
  Let $f \in \widetilde{K}_0(\End (A))$. Then
  \[
  F^m V^n (f) = \gcd(m,n) V^{n/(m,n)} F^{m/(m,n)} (f)
  \]
\end{lem}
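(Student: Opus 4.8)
The plan is to verify the identity $F^m V^n(f) = \gcd(m,n)\, V^{n/(m,n)} F^{m/(m,n)}(f)$ by computing iterated traces of both sides, exactly in the style of \cref{lem:v_lemma}, and then invoking \cref{thm:workhorse} together with the standard torsion-free-plus-density argument to upgrade from an identity of iterated traces to an identity in $\widetilde{K}_0(\End(A))$. Since all the operators involved ($F$, $V$, $\oplus$) are defined categorically and $\tr^\bullet$ is injective on a torsion-free ring (because $\ch$ is injective by \cref{thm:workhorse} and $\tr^\bullet$ recovers $\ch$ via the logarithmic derivative), it suffices to check that $\tr((F^m V^n f)^k) = \gcd(m,n)\,\tr((V^{n/(m,n)} F^{m/(m,n)} f)^k)$ for all $k \geq 1$, and then port to general $A$.

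First I would reduce the left side. By \cref{lem:v_lemma}, $\tr(F^k(V^n g)) = n\,\tr(g^{k/n})$ if $n \mid k$ and $0$ otherwise, for any $g$. Applying this with $g = f$ and replacing the single power by the composite, $\tr((F^m V^n f)^k) = \tr(F^{mk} V^n f)$, which equals $n\,\tr(f^{mk/n})$ when $n \mid mk$ and $0$ otherwise. Writing $d = \gcd(m,n)$, $m = d m'$, $n = d n'$ with $\gcd(m',n') = 1$, the condition $n \mid mk$ becomes $dn' \mid dm'k$, i.e. $n' \mid m'k$, i.e. $n' \mid k$ (since $\gcd(m',n')=1$); and then $mk/n = m'k/n'$. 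So the left side is $d n'\, \tr(f^{m'k/n'})$ when $n' \mid k$ and $0$ otherwise.

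Next I would compute the right side the same way. We have $\tr((V^{n'} F^{m'} f)^k) = \tr(F^k V^{n'}(F^{m'} f))$, which by \cref{lem:v_lemma} equals $n'\,\tr((F^{m'}f)^{k/n'}) = n'\,\tr(f^{m'k/n'})$ when $n' \mid k$ and $0$ otherwise. Multiplying by $\gcd(m,n) = d$ gives $d n'\,\tr(f^{m'k/n'})$ when $n' \mid k$ and $0$ otherwise — identical to the left side. Hence the two virtual endomorphisms have equal iterated-trace vectors, so they agree in $\widetilde{K}_0(\End(A))$ for torsion-free $A$.

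The only real obstacle is the bookkeeping in the divisibility reduction ($n \mid mk \iff n' \mid k$) and making sure the coefficients $d n'$ match on the nose; this is elementary number theory but must be done carefully, and one should note that $d \cdot n' = n$, so both sides are really just ``$n\,\tr(f^{m'k/n'})$ when $n' \mid k$,'' which is a clean way to present the match. To finish, I would remark that for a general commutative ring $A$ one uses functoriality of $\widetilde{K}_0(\End(-))$ applied to a torsion-free cover together with injectivity of $\ch$ and the density of rational Witt vectors, exactly as described at the start of \cref{sec:verification}; alternatively, one can verify the matrix identity $F^m V^n(f) \cong \gcd(m,n)\, V^{n/(m,n)} F^{m/(m,n)}(f)$ directly at the categorical level by exhibiting an explicit isomorphism of endomorphisms (the underlying module on both sides is $P^{\oplus n}$, suitably permuted), which avoids any torsion hypothesis altogether and is perhaps the more satisfying route given the paper's categorical philosophy.
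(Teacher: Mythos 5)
Your proof is correct and follows essentially the same route as the paper's: verify equality of iterated traces of both sides using \cref{lem:v_lemma} and conclude via injectivity of $\ch$ (with the usual torsion-free-plus-functoriality argument). Your divisibility bookkeeping is in fact more careful than the published version, which contains a small typo in the divisibility condition for the right-hand side (it writes $\tfrac{n}{\gcd(m,n)} \mid \tfrac{m}{\gcd(m,n)}$ where the condition should involve $i$).
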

\begin{proof}
  We check that this is true on all iterated traces. From the previous \cref{lem:v_lemma}
  \[ 
  \Tr^i (F^m V^n (f)) = \Tr ( (V^n (f))^{mi})  =
  \begin{cases}
    n \tr(f^{(mi)/n)}) &  n | mi \\
    0 & \text{otherwise} 
  \end{cases}
  \]
  and
  \begin{align*}
    &\operatorname{gcd} (m, n)\Tr^i(V^{n/\gcd(m,n)} (f^{m/\gcd(m,n)}))\\
    &=
  \begin{cases}
    \operatorname{gcd}(m,n) \left(\frac{n}{\operatorname{gcd}(m,n)}\right) \tr (f^{mi/n}) & \frac{n}{\operatorname{gcd}(m, n)} | \tfrac{m}{\gcd (m,n)}\\
    0 & \text{otherwise} 
  \end{cases}
  \end{align*}
  So we are done.
  
\end{proof}

\begin{lem}\label{other_v_f}
The maps $V^n, F^n$ in $\widetilde{K}_0 (\End(A))$ satisfy Frobenius reciprocity:
\[
f \otimes V^n (g) = V^n (F^n (f)\otimes  g)
\]
\end{lem}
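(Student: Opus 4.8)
The plan is to verify the identity $f \otimes V^n(g) = V^n(F^n(f) \otimes g)$ by checking it on all iterated traces, exactly in the style of the proofs of \cref{lem:v_lemma} and \cref{v_f_relation}. Since we are working over a ($p$-)torsion-free ring, \cref{thm:workhorse} together with the fact that $\tr^\bullet$ detects elements (via the characteristic polynomial, whose logarithmic derivative packages the iterated traces) means it suffices to show that $\Tr^i$ of both sides agree for every $i \geq 1$; the general case then follows by functoriality and density of the rational Witt vectors, as announced at the start of \cref{sec:verification}.

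First I would compute the left-hand side. By multiplicativity of the trace under $\otimes$ and by \cref{lem:v_lemma} applied to $V^n(g)$, we have
\[
\Tr^i(f \otimes V^n(g)) = \Tr(F^i f) \cdot \Tr\big((V^n g)^i\big) =
\begin{cases}
\tr(f^i) \cdot n\,\tr(g^{i/n}) & n \mid i\\
0 & \text{otherwise.}
\end{cases}
\]
Next I would compute the right-hand side. Applying \cref{lem:v_lemma} to $V^n$ of the endomorphism $F^n(f) \otimes g$, and again using multiplicativity of the trace under $\otimes$,
\[
\Tr^i\big(V^n(F^n(f) \otimes g)\big) =
\begin{cases}
n \cdot \tr\big((F^n(f) \otimes g)^{i/n}\big) & n \mid i\\
0 & \text{otherwise}
\end{cases}
=
\begin{cases}
n \cdot \tr\big(f^{n \cdot (i/n)}\big)\,\tr\big(g^{i/n}\big) & n \mid i\\
0 & \text{otherwise,}
\end{cases}
\]
where in the last step I used $(F^n f)^{i/n} = f^{n(i/n)} = f^i$ and the fact that $\tr((P \otimes Q \text{ maps})) = \tr(P\text{-part})\tr(Q\text{-part})$ for tensor powers. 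Since $n(i/n) = i$ when $n \mid i$, the two expressions coincide term by term, and both vanish when $n \nmid i$.

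The only genuinely delicate point — and the step I expect to be the main obstacle — is bookkeeping the interaction between the tensor product and the iteration: one must be careful that $(F^n(f) \otimes g)^{k}$ really does have trace $\tr(f^{nk})\tr(g^k)$, i.e. that raising the tensor-product endomorphism to the $k$th power raises each tensor factor to the $k$th power and then one applies multiplicativity of trace. This is immediate from the definition of $\otimes$ on $\End(A)$ (the endomorphism on $P \otimes Q$ is $f \otimes g$, and $(f \otimes g)^k = f^k \otimes g^k$), but it is the one place where a sign or index error would creep in. Once that is pinned down, the matching of the two case-expressions is automatic, so no further calculation is needed.
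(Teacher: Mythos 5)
Your proof is correct and follows essentially the same route as the paper: verify the identity on iterated traces $\Tr^i$, using multiplicativity of the trace under $\otimes$ together with \cref{lem:v_lemma}, and match the two case-expressions. The extra care you flag about $(f \otimes g)^k = f^k \otimes g^k$ is sound but routine, and the paper takes it for granted in the same way.
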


\begin{proof}
   We want to show
  \[
  \tr^i (f \otimes V^n (g)) = \tr^i (V^n (F^n (f) \otimes g))
  \]
  for all $i$. We use formulas that we now have at our disposal, together with the multiplicativity of trace. By multiplicativity and \cref{lem:v_lemma}
\[
\tr^i(f \otimes V^n(g)) = \tr(f^i) \tr^i(V^n (g)) = 
\begin{cases}
n \tr(f^i) \tr(g^{i/n}) & n | i \\
0 & \ \text{otherwise}
\end{cases}
\]
Similarly, 

\[
\tr^i (V^n (F^n (f) \otimes g)) = 
\begin{cases} 
n \tr^i ( (F^n(f) \otimes g)^{i/n}) & n | i \\
0 & \text{otherwise}
\end{cases}
= 
\begin{cases}
n \tr(f^i) \tr(g^{i/n}) & n | i \\
0 & \text{otherwise}
\end{cases}.
\]
Thus, the operations coincide. 
\end{proof}

\subsection{Relations on the Norm}

In \cite{angeltveit} Angeltveit produces a multiplicative norm on Witt vectors. The construction proceeds in the usual way by requiring that certain identities hold on ghost coordinates and checking that they induce a multiplication in Witt coordinates. For the moment, we take our definition of ``Witt vectors'' to be $\widetilde{K}_0 (\End(A))$ and we examine what identities hold. 

\begin{defn}
  The \textbf{norm} on $\widetilde{K}_0 (\End(A))$ is defined via the multiplicative Verschiebung (\cref{defn:mult_verb}).  It is a multiplicative monoid homomorphism
  \[
  N^i: K_0 (\End(A))^{\otimes} \to  K_0 (\End(A))^{\otimes}
  \]
  that is, it does not respect the \textit{additive} structure. 
\end{defn}

Given this definition, the main theorem of \cite{angeltveit} admits a linear algebraic proof. 

\begin{prop}\cite[Thm.~1.2]{angeltveit}
Let $\mbf{a} \in W(A)$. Then $F^i \circ N^i (\mathbf{a}) = \mathbf{a}^i$
\end{prop}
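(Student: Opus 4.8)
The plan is to verify the identity $F^i \circ N^i(\mathbf{a}) = \mathbf{a}^i$ at the categorical level on $\widetilde{K}_0(\End(A))$, where $N^i$ is the monoid map induced by the multiplicative Verschiebung of \cref{defn:mult_verb}, and then transport to $W(A)$ by the usual density/functoriality argument. Since both $F^i$ and $N^i$ are multiplicative (the former a ring map, the latter a monoid map for $\otimes$), and $\mathbf{a} \mapsto \mathbf{a}^i$ is multiplicative, it suffices to check the identity on the monoid generators of $\widetilde{K}_0(\End(A))^\otimes$ — concretely, on a single endomorphism $(P,f)$ — and, because we are working in the torsion-free setting, it suffices to check it on all iterated traces $\tr^n$.

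The key computation is the following. First, $N^i(P,f)$ is the endomorphism of $\bigotimes_{j=1}^i P$ given by $\mathrm{rot}\circ (f\otimes 1 \otimes\cdots\otimes 1)$, the tensor analogue of the cyclic Verschiebung matrix. So I need a tensor-version of \cref{lem:v_lemma}: computing $(N^i(P,f))^{n}$ and its trace. Just as $F^n V^d(f)$ has zero trace unless $d\mid n$, in which case it is $d\,\tr(f^{n/d})$, the composite $\mathrm{rot}\circ(f\otimes 1\otimes\cdots)$ raised to the $n$th power will have trace zero unless $i \mid n$, and when $n = im$ it collapses to $\tr(f^m)^{i}$ — the $i$ because the rotation visits all $i$ tensor factors, and the $i$th power of $\tr$ because trace is multiplicative on $\otimes$ and the $f$'s land one in each slot. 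Concretely, $\tr\big((N^i(P,f))^{im}\big) = \big(\tr(f^m)\big)^i$. Now apply $F^i$, which replaces an endomorphism by its $i$th power: $\tr^n\big(F^i N^i(P,f)\big) = \tr\big((N^i(P,f))^{in}\big) = \big(\tr(f^n)\big)^i$. On the other side, $\tr^n\big((P,f)^{\otimes i}\big) = \tr\big((f^{\otimes i})^n\big) = \tr\big((f^n)^{\otimes i}\big) = \big(\tr(f^n)\big)^i$ by multiplicativity of trace. The two agree for every $n$, so the identity holds on $\widetilde{K}_0(\End(A))$.

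Finally, to pass from $\widetilde{K}_0(\End(A))$ to $W(A)$: by \cref{thm:workhorse} the characteristic polynomial identifies $\widetilde{K}_0(\End(A))$ with the rational Witt vectors, which are dense in $W(A)$ in the $V$-topology; the operations $F^i$ and $N^i$ are continuous in this topology (this is part of the overarching principle of the paper, that $V$-continuous functorial constructions descend), and $\mathbf{a}\mapsto\mathbf{a}^i$ is continuous; so the identity extends from the dense subring to all of $W(A)$. Functoriality handles rings with torsion in the standard way, reducing to a torsion-free model such as a polynomial ring over $\mathbb{Z}$.

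The main obstacle I expect is the bookkeeping in the tensor-trace computation — establishing carefully that $\tr\big((\mathrm{rot}\circ(f\otimes 1\otimes\cdots\otimes 1))^{n}\big)$ vanishes unless $i\mid n$ and equals $\tr(f^{n/i})^i$ otherwise. This is the exact multiplicative mirror of \cref{lem:v_lemma}, and the cleanest route is probably a direct index-chase: track how the cyclic rotation permutes the tensor factors and where the $n/i$ copies of $f$ accumulate in each factor when $n$ is a multiple of $i$, using that the trace of a ``rotation-type'' operator on $\bigotimes^i P$ picks out exactly the cyclically-invariant contributions. Once that lemma is in hand, the rest is immediate from multiplicativity of trace.
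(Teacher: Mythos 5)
Your proof is correct in its essential calculation, but you take a longer route than the paper, and your auxiliary lemma — the one you flag as the expected main obstacle — is stated incorrectly, even though the error does not reach your conclusion.

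The paper's proof is a one-liner at the level of endomorphisms, with no trace computation at all: it observes that $F^i N^i(f) = (\mathrm{rot}\circ(f\otimes 1\otimes\cdots\otimes 1))^{i} = f^{\otimes i}$, because composing $i$ times distributes the single $f$ to all $i$ tensor slots while the rotation closes up. Since $\mathbf{a}^i$ is represented by $f^{\otimes i}$ in $\widetilde{K}_0(\End(A))$, that is the whole proof. You instead verify equality of all iterated traces $\tr^n$, which works but reproves this algebraic identity coordinate by coordinate. The direct observation $F^iN^i(f)=f^{\otimes i}$ is worth having in hand: it makes the $V$-continuity and torsion issues you invoke at the end unnecessary for this particular proposition, since the identity holds strictly at the endomorphism level, not just after applying traces.

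Separately, your claim that $\tr\bigl((N^i(f))^n\bigr)$ vanishes unless $i\mid n$ — presented as ``the exact multiplicative mirror of Lemma~\ref{lem:v_lemma}'' — is false. The multiplicative Verschiebung does not behave like the additive one here: the paper's very next result gives, for $p$ prime,
\[
\tr^i\bigl(N^{p}f\bigr) = \begin{cases}(\tr(f^{i/p}))^p & p\mid i\\ \tr(f)^i & p\nmid i,\end{cases}
\]
so off-divisibility contributions are nonzero. Your final computation only uses the exponent $in$, which is always a multiple of $i$, so this mistake happens not to infect your conclusion — but it would sink the ``main obstacle'' lemma you propose to prove, so be aware that the additive and multiplicative Verschiebungs genuinely diverge on this point.
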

\begin{proof}
  It is enough to prove for iterated traces of endomorphisms. First
  \[
  N^i (f) = \operatorname{rot}(\id \otimes \id \otimes \cdots \otimes f)
  \]
  That is, we take a $d$-fold tensor, apply $f$ to the last coordinate and rotate. Thus
  \[
  F^i ( N^i (f)) = N^i (f) \circ \dots \circ N^i (f) = \underbrace{f \otimes \cdots \otimes f}_{i \ \text{times}}
  \]
  By the multiplicativity of trace, we have   $\tr(F^i N^i (f)) = f^i$ and we are done.
\end{proof}

As in the case of the additive Verschiebung and Frobenius,  and the Norm and Frobenius interact more subtley depending on divisibility properties. Again, these divisibility properties come from the behavior of cyclic groups. 

\begin{example}
  Let $V$ be an $n$-dimensional vector space, with basis $e_1, \dots, e_n$. Then $V^{\otimes k}$ has basis $e_{i_1} \otimes \cdots \otimes e_{i_k}$. The norm map applies an endomorphism to the last coordinate and then rotates the basis elements. The fixed-points, and thus the trace, of this rotation action are clearly governed by divisibility properties. For example, $\operatorname{rot}^3$ fixes $e_1 \otimes e_2 \otimes e_3 \otimes e_1 \otimes e_2 \otimes e_3$. The basis elements $e^{\otimes k}_i$ are always fixed. Similarly, when computing traces, we only care about fixed points of basis element. Let $f$ be an endomorphism given by a matrix $(a_{ij})$. We examine what happens to a basis element under the norm
  \begin{align*}
    e_{i_1} \otimes \cdots \otimes e_{i_k} &\mapsto  e_{i_1} \otimes \cdots \otimes f(e_{i_k}) \\
    &= \sum_j e_{i_1} \otimes \cdots \otimes a_{i_k j} e_j \\
    &\mapsto \sum_j a_{i_k j} (e_j \otimes e_{i_1} \otimes \cdots \otimes e_{i_{k-1}})
  \end{align*}
  Thus, the terms where $j = i_1$, $i_1 = i_2$, $i_2 = i_3$, ..., contribute to the trace. Similar statements hold upon iteration. 
\end{example}

Given the above example, the following theorem is now easy.

\begin{thm}
  Let $p$ be a prime. Then
  \[
  \tr^i (N^{\otimes p} f) =
  \begin{cases}
    (\tr(f^{i/p}))^p & p \mid i \\
    \tr(f)^i & p \nmid i 
  \end{cases}
  \]
  Thus, for $d$ a positive integer
  \[
  \tr^i (N^{\otimes d} f) = \Tr(f^{i/\operatorname{gcd}(d,i)})^{\operatorname{gcd}(d,i)}
  \]
\end{thm}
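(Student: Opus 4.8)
The plan is to reduce everything to a combinatorial count of fixed basis vectors under the rotation-and-apply-$f$ operator, exactly as set up in the preceding example, and then package the count divisibility class by divisibility class.

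First I would fix a free module (it suffices to work with free modules and invoke additivity/functoriality, or just observe that trace only depends on the matrix) with basis $e_1,\dots,e_n$ and a matrix $(a_{pq})$ representing $f$. The operator $N^{\otimes d}(f)$ acts on $V^{\otimes d}$ by applying $f$ to the last tensor coordinate and rotating one click; its $i$-th iterate $F^i N^{\otimes d}(f) = (N^{\otimes d}(f))^i$ then applies a total of $i$ copies of $f$, distributed across the coordinates according to the rotation, and rotates by $i$ clicks (i.e. by $i \bmod d$). Following the computation in the example, the trace of this iterate is a sum over tuples $(e_{j_1},\dots,e_{j_d})$ of the coefficient picked up along the unique orbit, and a tuple contributes iff it is \emph{fixed} by the underlying rotation-by-$i$ permutation of the $d$ slots. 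The rotation by $i$ on $d$ slots has cycles all of length $d/\gcd(d,i)$, and there are exactly $\gcd(d,i)$ such cycles. So a fixed tuple is determined by choosing one basis index freely on each of the $\gcd(d,i)$ cycles, and along each cycle one accumulates a factor of $\tr$ of an $\big(i/\gcd(d,i)\big)$-fold composite of $f$ — because each cycle has length $d/\gcd(d,i)$ and the $i$ applications of $f$ are spread evenly, contributing $i/\gcd(d,i)$ applications per cycle.

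Assembling this: summing the contribution of each cycle independently gives a product over the $\gcd(d,i)$ cycles, and each factor is $\tr\big(f^{\,i/\gcd(d,i)}\big)$ by the standard fact that $\sum$ over a cyclic orbit of the coefficient product equals the trace of the corresponding composite (this is precisely $\tr(f^{n/d})$-type identity already used in \cref{lem:v_lemma}). Hence
\[
\tr^i\big(N^{\otimes d} f\big) = \Big(\tr\big(f^{\,i/\gcd(d,i)}\big)\Big)^{\gcd(d,i)}.
\]
The special case $d = p$ prime then drops out: if $p \mid i$ then $\gcd(p,i) = p$, giving $\big(\tr(f^{i/p})\big)^p$; if $p \nmid i$ then $\gcd(p,i) = 1$, giving $\tr(f)^i$ — using that $\tr^i(g) = \tr(g^i)$ by definition of $\tr^\bullet$ and $F^i$. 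Finally, to remove the torsion-free hypothesis one ports the identity by functoriality in $A$ and density of rational Witt vectors, exactly as in the rest of \cref{sec:verification}.

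\textbf{Main obstacle.} The one genuinely delicate point is the bookkeeping in the middle step: making precise that the $i$ applications of $f$, once the rotation is iterated $i$ times on $d$ slots, really do distribute as $i/\gcd(d,i)$ applications per cycle of length $d/\gcd(d,i)$, and that the trace then factors as a product over cycles. I expect this is cleanest done by changing coordinates on the slot set so that each rotation-cycle becomes a contiguous block, reducing to the single-block (single-cycle) computation — which is exactly the $V^d$/Frobenius trace computation already carried out in \cref{lem:v_lemma} and \cref{v_f_relation} — and then taking the tensor product over blocks, where multiplicativity of trace does the rest. Everything else is routine.
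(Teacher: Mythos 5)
Your proposal is correct and takes essentially the same approach as the paper, whose proof is simply ``we codify the previous example'': the cycle structure of rotation-by-$i$ on $d$ slots (there are $\gcd(d,i)$ cycles of length $d/\gcd(d,i)$), the even distribution of the $i$ applications of $f$ across each cycle, and the factorization of the trace as a product over cycles are exactly what the paper has in mind. One small caveat: the sentence ``a tuple contributes iff it is fixed by the rotation'' is not literally true once several slots carry a copy of $f$ (try $d=2$, $i=3$, where $(Nf)^3 = \mathrm{rot}\circ(f^2\otimes f)$ and off-diagonal indices contribute); the correct mechanism, which you do invoke in the final sentence, is that $\tr\bigl(\sigma\circ(g_1\otimes\cdots\otimes g_d)\bigr)$ factors as a product over cycles of $\sigma$ of the trace of the composite of the $g_m$ along the cycle, so the conclusion stands even though the intermediate ``fixed points'' description is slightly loose.
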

\begin{proof}
We simply codify the previous example. 
\end{proof}

Combining the previous theorem, with fact that ghost coordinates are computed via traces, we obtain the following theorem of Angeltveit.

\begin{lem}\cite[Thm.~1.4]{angeltveit}
  On ghost coordinates, the norm maps $(x_t) \mapsto (y_t)$ are given by 
  \[
  y_t = x^g_{t/g} \qquad g = \operatorname{gcd}(d,t)
  \]
\end{lem}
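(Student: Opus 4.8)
The plan is to deduce this lemma directly from the preceding theorem together with the fact, established repeatedly in \cref{sec:main_section}, that the ghost coordinates of an element of $\widetilde{K}_0(\End(A))$ are exactly its iterated traces. Concretely, for a single endomorphism $(P,f)$ the $t$th ghost coordinate of $[f]$ is $\mathbf{gh}_t(f) = \tr(f^t) = \tr^t(f)$, and under the identification of $\widetilde{K}_0(\End(A))$ with the rational Witt vectors this is precisely the $t$th component of the ghost vector. So if $x_t$ denotes the $t$th ghost coordinate of $[f]$, then by definition $x_t = \tr^t(f)$.

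First I would record that the norm $N^d$ on Witt vectors is, by the \cref{defn} of the norm on $\widetilde{K}_0(\End(A))$ via the multiplicative Verschiebung, represented on the rational Witt vectors by the functor $N^{\otimes d}$ applied to a representing endomorphism. Hence the $t$th ghost coordinate $y_t$ of $N^d[f]$ is $\tr^t(N^{\otimes d} f)$. Now I simply apply the displayed formula from the previous theorem:
\[
y_t = \tr^t(N^{\otimes d} f) = \tr\!\left(f^{t/\gcd(d,t)}\right)^{\gcd(d,t)} = \left(\tr^{t/\gcd(d,t)}(f)\right)^{\gcd(d,t)} = x_{t/\gcd(d,t)}^{\gcd(d,t)},
\]
which with $g = \gcd(d,t)$ is exactly the asserted formula $y_t = x_{t/g}^{g}$.

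The only genuine point requiring care — and the step I expect to be the main obstacle — is the passage from endomorphisms of finitely generated projectives to arbitrary Witt vectors, since $N^{\otimes d}$ and the trace formulas only literally make sense in the finite, torsion-free setting. Here I would invoke the standard density/continuity argument already used throughout \cref{sec:verification}: the rational Witt vectors are dense in $W(A)$ in the $V$-topology, the ghost map is a natural transformation of functors on $\operatorname{Ring}$, and both sides of $y_t = x_{t/g}^g$ are polynomial (hence continuous and functorial) expressions in the ghost coordinates. So it suffices to verify the identity for representing endomorphisms over torsion-free rings, where it follows from the previous theorem, and then extend by functoriality in $A$ together with density of the rational Witt vectors. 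Additive versus multiplicative bookkeeping is not an issue here because the identity is being checked coordinatewise on ghost vectors, where it reduces to the single-endomorphism computation above; no additivity of $N$ is needed, only that $N$ is represented by $N^{\otimes d}$ on generators.
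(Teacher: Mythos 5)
Your proof is correct and takes exactly the route the paper intends: the paper does not supply a written proof for this lemma, only the one-sentence gloss ``Combining the previous theorem, with fact that ghost coordinates are computed via traces,'' and your argument is a faithful unpacking of that sentence. You apply the second display of the preceding theorem, $\tr^i(N^{\otimes d}f) = \Tr(f^{i/\gcd(d,i)})^{\gcd(d,i)}$, with $i=t$, identify $x_t = \tr^t(f)$, and read off $y_t = x_{t/g}^g$; your closing paragraph on extending from representing endomorphisms over torsion-free rings to arbitrary $A$ by functoriality and density of the rational Witt vectors is the same device the paper announces at the start of \cref{sec:verification}. The only small point worth flagging (not a gap in your argument, but useful to be aware of): the first display of the preceding theorem has a typo in the $p\nmid i$ branch, which reads $\tr(f)^i$ but should be $\tr(f^i)$ to agree with the general formula you actually use; you correctly bypassed this by working from the second display.
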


\section{The $V$-Filtration}\label{sec:main_results}

In this section, we attempt to clarify more of the structure of the Witt vectors. From our perspective, there are a few issues with the Witt vectors as they stand. One is the kludgy feel of consistently reasoning with torsion-free rings and then porting answers to rings with torsion based on universal polynomials. Another is a lack of a clean answer for what the Witt vectors \textit{are}. A third is their completely uncomputable nature, despite their importance and concreteness. This is by no means an exhaustive list.  In this section we show that the Witt vectors are a completion of the ring $\widetilde{K}_0 (\End(A))$, which, in some sense, answers what the Witt vectors are. This definition will allow us to repair the rest of the issues. 

Before continuing, it is useful to have some discussion of the structure of the ring of rational power series $(1+A[[t]])^\times_{\text{rat}}$. We certainly have occasion to consider the rational power series, and also various truncations. 

\begin{notn}\label{notn:rational}
Let $(1+A[[t]])^\times_{\text{rat}}$ denote the rational power series, i.e. ones of the form
\[
\frac{1+a_1 t + a_2 t^2 + a_3 t^3 + \cdots + a_m t^m}{1 + b_1t + b_2 t^2 + b_3 t^3+ \cdots + b_n t^n}
\]
Let $(1+A[[t]])^\times_{\text{rat}, \geq n}$ denote the rational power series denote those of the form $1+a_n t^n + \cdots$. 
\end{notn}

By Almkvist's theorem, the characteristic polynomial map $\widetilde{K}_0 (\End(A))$ induces an \textit{isomorphism} of rings
\[
\begin{tikzcd}
\widetilde{K}_0 (\End(A)) \ar{r}{\cong} & (1+A[[t]])^\times_{\text{rat}}
\end{tikzcd}
\]
It is useful to know the inverse map. Suppose we have a rational power series written as in \cref{notn:rational}. Then, an explicit element in $\widetilde{K}_0 (\End(A))$ mapping to it is the virtual endomorphism below, obtained from companion matrices
\[
\left[\begin{pmatrix}
0 &0 &  0& \cdots & a_m\\
1 &0 & 0 &    &         a_{m-1}\\
 0 & 1 &   &    &         a_{m-2} \\
  &  &  \ddots &    &         \vdots \\
  &  &  &            1  & a_1
\end{pmatrix}, A^{\oplus m}\right]
-
\left[
\begin{pmatrix}
0 &0 &  0& \cdots & b_n\\
1 &0 & 0 &    &         b_{n-1}\\
 0 & 1 &   &    &         b_{n-2} \\
  &  &  \ddots &    &         \vdots \\
  &  &  &            1  & b_1
\end{pmatrix}, A^{\oplus n}
\right]
\]

We end our brief discussion with the following useful lemma

\begin{lem}\label{rational_filtered_equivalence}
There is an isomorphism of rings, induced by the inclusion map,
\[
\begin{tikzcd}
(1 + A[[t]])^{\times}_{\text{rat}} / (1+A[[t]])^{\times}_{\text{rat}, \geq n} \ar{r} &  (1+A[[t]])^\times / (1+A[[t]])^\times_{\geq n}
\end{tikzcd}
\]
\end{lem}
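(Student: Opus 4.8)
The plan is to show that the inclusion of rings $(1+A[[t]])^\times_{\mathrm{rat}} \hookrightarrow (1+A[[t]])^\times$ carries the ideal $(1+A[[t]])^\times_{\mathrm{rat},\geq n}$ into $(1+A[[t]])^\times_{\geq n}$, so that it descends to a ring map on the quotients, and then to check that this induced map is both injective and surjective. Injectivity is essentially free: if $p(t) \in (1+A[[t]])^\times_{\mathrm{rat}}$ becomes congruent to $1$ modulo $t^n$ in $(1+A[[t]])^\times$, then $p(t)$ already lies in $(1+A[[t]])^\times_{\mathrm{rat},\geq n}$ because the condition ``$p(t) \equiv 1 \bmod t^n$'' only refers to the first $n-1$ coefficients of $p$, which are the same whether computed in the rational ring or the full ring. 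So the kernel of the composite $(1+A[[t]])^\times_{\mathrm{rat}} \to (1+A[[t]])^\times/(1+A[[t]])^\times_{\geq n}$ is exactly $(1+A[[t]])^\times_{\mathrm{rat},\geq n}$.

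For surjectivity, I would argue that every power series of the form $1 + a_1 t + \cdots + a_{n-1}t^{n-1} + (\text{higher order terms})$ is congruent modulo $t^n$ to the \emph{polynomial} $1 + a_1 t + \cdots + a_{n-1} t^{n-1}$, and that this polynomial is visibly rational (it is a polynomial over $1$). Hence every class in $(1+A[[t]])^\times/(1+A[[t]])^\times_{\geq n}$ has a rational representative, namely a truncation polynomial, and the map on quotients is onto. Combined with the injectivity above, this shows the induced map is a bijection; since it is induced by a ring homomorphism and respects the quotient ring structures (which are well-defined by \cref{lem:truncated_iso}, as $(1+A[[t]])^\times_{\geq n}$ is an ideal, and since the rational power series form a subring by \cref{thm:workhorse}, with $(1+A[[t]])^\times_{\mathrm{rat},\geq n}$ an ideal of it), it is a ring isomorphism.

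The only genuine point requiring care — and the step I expect to be the main obstacle — is verifying that $(1+A[[t]])^\times_{\mathrm{rat},\geq n}$ really is an ideal of $(1+A[[t]])^\times_{\mathrm{rat}}$ under Witt multiplication, i.e. that the Witt product of a rational series starting $1 + O(t^n)$ with an arbitrary rational series again starts $1 + O(t^n)$. This follows from \cref{lem:truncated_iso}, which asserts that $(1+A[[t]])^\times_{\geq n}$ is an ideal of the full ring: intersecting with the subring $(1+A[[t]])^\times_{\mathrm{rat}}$ (which is closed under Witt multiplication by Almkvist's theorem, \cref{thm:workhorse}) gives that $(1+A[[t]])^\times_{\mathrm{rat}} \cap (1+A[[t]])^\times_{\geq n} = (1+A[[t]])^\times_{\mathrm{rat},\geq n}$ is an ideal of $(1+A[[t]])^\times_{\mathrm{rat}}$, and the quotient makes sense. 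With those structural facts in place the remaining verifications are the routine coefficient bookkeeping sketched above.
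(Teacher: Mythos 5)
Your proof is correct and takes essentially the same approach as the paper. The paper's proof is a single sentence observing that both quotients are identically the set of degree-$<n$ polynomial truncations with the inherited Witt ring structure; you have filled in the injectivity (a rational series lying in $(1+A[[t]])^\times_{\geq n}$ is by definition in $(1+A[[t]])^\times_{\text{rat},\geq n}$), surjectivity (the polynomial truncation is a rational representative of each coset), and the ideal-theoretic well-definedness that the paper leaves implicit.
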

\begin{proof}
Both rings are power series $(1 + a_1 t+ \cdots + a_n t^n)$, with Witt multiplication and addition, modulo higher order terms. 
\end{proof}

The intuition behind the procedures in this section is that the characteristic polynomial and iterated traces encode the same data, and so one knows nearly complete information about an endomorphism from iterated traces. When taking iterated traces, passing from $\tr^i(f)$ to $\tr^{i+1}(f)$ moves new ring elements into the diagonal. The first $n$ Witt coordinate contain information, encoded in the most parsimonious fashion, about what new ring elements move in. Thus, we create a filtration based on traces. This also amounts to the same thing as creating a filtration based on Verschiebungs: as we have already seen, these control the ``depth'' of traces. 

\begin{defn}
The \textbf{$V$-filtration} $V^{\geq k} \widetilde{K}_0 (\End(A))$ is defined as follows.  The $k$th filtration level $V^{\geq k} \widetilde{K}_0 (\End(A))$ is the inverse image of $(1+A[[t]])^\times_{\geq k}$ under the characteristic polynomial map. 
\end{defn}

This suggests the following, more categorical, definition. 

\begin{defn}
The $V$-filtration $V^{\geq k}$ on $\End(A)$ is defined as follows.  Let $V^{\geq k} \End(A)$ be the full subcategory of $\End(A)$ consisting of those endomorphisms in the inverse image of $(1+A[t])^\times_{\geq n}$, the \textit{polynomials} of degree higher than $n$. 
\end{defn}

\begin{rmk}
We are trying to avoid introducing heavy language into this paper, but a better way to define this category is as the thick subcategory generated by the images of all $V^j$ with $j \geq k$. 
\end{rmk}

We have the following fact. 

\begin{lem}
The category $V^{\geq k} \End(A)$ is an exact category. 
\end{lem}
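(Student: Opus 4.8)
The plan is to verify the axioms of an exact category directly, using the fact that $V^{\geq k}\End(A)$ is a full subcategory of the ambient exact category $\End(A)$, and so the natural thing to do is show it is closed under extensions. Recall that a full, additive subcategory $\mathcal{B}$ of an exact category $\mathcal{C}$ that is closed under extensions inherits an exact structure from $\mathcal{C}$ (the admissible short exact sequences in $\mathcal{B}$ being those short exact sequences in $\mathcal{C}$ all of whose terms lie in $\mathcal{B}$); this is a standard fact (see e.g. Quillen's original treatment or Bühler's survey on exact categories). So the entire content of the lemma reduces to two checks: that $V^{\geq k}\End(A)$ is an additive subcategory (closed under finite direct sums and containing a zero object), and that it is closed under extensions in $\End(A)$.

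First I would note that $V^{\geq k}\End(A)$ is defined as a full subcategory, and that closure under $\oplus$ is immediate: if $(P,f)$ and $(Q,g)$ both have characteristic polynomial in $(1+A[t])^\times_{\geq k}$, then $\ch((P,f)\oplus(Q,g)) = \ch(P,f)\cdot\ch(Q,g)$ (multiplication of power series), and $(1+A[t])^\times_{\geq k}$ is closed under this product — indeed it is the subgroup of the power series group consisting of series $1 + a_k t^k + \cdots$, so the product of two such is again of that form. The zero object $(0,0)$ has characteristic polynomial $1$, which lies in every $(1+A[t])^\times_{\geq k}$ vacuously. So $V^{\geq k}\End(A)$ is an additive full subcategory.

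Next, for closure under extensions: suppose $0 \to (P_1,f_1) \to (P_3,f_3) \to (P_2,f_2) \to 0$ is an exact sequence in $\End(A)$ with $(P_1,f_1)$ and $(P_2,f_2)$ in $V^{\geq k}\End(A)$. Since the underlying sequence of modules is split (the $P_i$ are finitely generated projective) and compatible with the endomorphisms, a choice of splitting realizes $f_3$ in block upper-triangular form $\left(\begin{smallmatrix} f_1 & * \\ 0 & f_2\end{smallmatrix}\right)$ with respect to $P_3 \cong P_1 \oplus P_2$; hence $\det(\id - tf_3) = \det(\id - tf_1)\cdot\det(\id - tf_2)$, i.e. $\ch(P_3,f_3) = \ch(P_1,f_1)\cdot\ch(P_2,f_2)$, which again lies in $(1+A[t])^\times_{\geq k}$ by the same closure-under-products observation. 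Thus $(P_3,f_3) \in V^{\geq k}\End(A)$, and closure under extensions holds.

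The main (and really only) obstacle is making sure the ambient structural fact is applied correctly — namely that an extension-closed full additive subcategory of an exact category is itself exact, with the inherited admissible sequences. One should also double-check the remark's alternative description (``thick subcategory generated by the images of the $V^j$, $j\geq k$''), which is consistent because each $V^j(P,f)$ has characteristic polynomial $\det(\id - t^j f^{?})$ — more precisely $\ch(V^j(a)) = 1 - a t^j$ on generators — landing in $(1+A[t])^\times_{\geq j} \subseteq (1+A[t])^\times_{\geq k}$, and conversely companion-matrix presentations show every polynomial of degree $\geq k$ is hit; but since the lemma as stated only asks for the exact structure, this equivalence of descriptions is not strictly needed for the proof and can be left to the remark. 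Everything else is routine: the characteristic polynomial is multiplicative on direct sums and on extensions (block-triangularity), and $(1+A[t])^\times_{\geq k}$ is a subgroup, so the preimage is closed under the operations that matter.
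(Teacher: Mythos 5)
Your proof is correct and takes the same approach as the paper, which simply asserts that $V^{\geq k}\End(A)$ is a full subcategory of an exact category closed under extensions; you supply the (correct) details the paper omits — additivity via multiplicativity of $\operatorname{ch}$ on $\oplus$, and closure under extensions via the block upper-triangular form coming from the split underlying module sequence.
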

\begin{proof}
It is a full subcategory of an exact category, and it is closed under extension. 
\end{proof}

Thus, we can take $K$-theory $V^{\geq k} \End(A)$. It is useful for peace of mind to check the following. 

\begin{prop}
  The inclusion map induces an isomorphism of rings
  \[
  V^{\geq k} \widetilde{K}_0 (\End(A)) \cong \widetilde{K}_0 (V^{\geq k}\End(A))
  \]
\end{prop}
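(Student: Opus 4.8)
The plan is to show the two rings are isomorphic by identifying both as explicit subrings/sub-quotients of the rational power series, using Almkvist's theorem (\cref{thm:workhorse}) and localization/dévissage-type reasoning in $K$-theory. First I would recall that by \cref{thm:workhorse} the characteristic polynomial gives an isomorphism $\ch\colon \widetilde{K}_0(\End(A)) \xrightarrow{\cong} (1+A[[t]])^\times_{\text{rat}}$, so by definition $V^{\geq k}\widetilde{K}_0(\End(A)) = \ch^{-1}\big((1+A[[t]])^\times_{\text{rat},\geq k}\big)$, and it suffices to show that $\widetilde{K}_0(V^{\geq k}\End(A))$ maps isomorphically onto $(1+A[[t]])^\times_{\text{rat},\geq k}$ under the restriction of $\ch$.

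For surjectivity onto $(1+A[[t]])^\times_{\text{rat},\geq k}$, I would use the explicit companion-matrix formula for the inverse of $\ch$ recalled just above the statement: a rational power series of the form $\tfrac{1 + a_1 t + \cdots + a_m t^m}{1 + b_1 t + \cdots + b_n t^n}$ lying in degree $\geq k$ is hit by the difference of the two companion-matrix endomorphisms, and I must check that this virtual endomorphism actually lies in $\widetilde{K}_0(V^{\geq k}\End(A))$ — i.e. that each companion matrix, while not individually of $V$-filtration $\geq k$, can be rewritten so that the \emph{difference} lies in the thick subcategory generated by the images of $V^j$, $j\geq k$. The cleanest way is to observe that $(1+A[[t]])^\times_{\text{rat},\geq k}$ is generated as a subgroup (under Witt addition = power series multiplication) by the classes $\ch(V^j(a))=(1-at^j)$ for $j\geq k$ together with their additive inverses (quotients), via the long-division / ``$V$-division'' argument already used in \cref{example:ab_example} and in the discussion preceding \cref{lem:truncated_iso}; pulling these generators back through $\ch$ lands them in the image of $\widetilde{K}_0(V^{\geq k}\End(A))$.

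For injectivity, the point is that the composite $\widetilde{K}_0(V^{\geq k}\End(A)) \to \widetilde{K}_0(\End(A)) \xrightarrow{\ch} (1+A[[t]])^\times_{\text{rat}}$ is injective because $\ch$ is injective (\cref{thm:workhorse}) — so the only genuine content is surjectivity of $\widetilde{K}_0(V^{\geq k}\End(A)) \to V^{\geq k}\widetilde{K}_0(\End(A))$, which is exactly the surjectivity onto $(1+A[[t]])^\times_{\text{rat},\geq k}$ discussed above, together with checking that the map is a \emph{ring} map (it is, since $\oplus$ and $\otimes$ on $V^{\geq k}\End(A)$ are restricted from $\End(A)$, and $(1+A[[t]])^\times_{\text{rat},\geq k}$ is an ideal by \cref{lem:truncated_iso} so closed under the relevant products).

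The main obstacle I anticipate is the surjectivity step — more precisely, verifying that the companion-matrix preimage of a degree-$\geq k$ rational power series genuinely lies in the \emph{thick subcategory} generated by the $V^j$, $j\geq k$, rather than merely mapping into $(1+A[[t]])^\times_{\geq k}$ after passing to $K_0$. One must be a little careful that ``thick subcategory generated by images of $V^j$'' and ``full subcategory of endomorphisms whose char.\ poly.\ has valuation $\geq k$'' agree at the level of $K_0$; the remark following the definition of $V^{\geq k}\End(A)$ asserts this, so I would either invoke it directly or give the short argument that any endomorphism with $\ch$ of valuation $\geq k$ becomes, in $K_0$, a $\mathbb{Z}$-linear combination of $[V^j(a)]$'s with $j\geq k$ by iterating the $V$-division procedure (each step peels off a $V^j(a)$ term and strictly decreases the ``size'' of the remaining data, exactly as in \cref{example:ab_example}). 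Everything else is routine bookkeeping about exact categories and Witt multiplication.
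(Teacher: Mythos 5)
Your high-level strategy — reduce everything to the rational power-series ring via Almkvist's theorem and then lift through companion matrices — is the same as the paper's, and the identification of surjectivity of $\widetilde{K}_0(V^{\geq k}\End(A)) \to V^{\geq k}\widetilde{K}_0(\End(A))$ as the only substantive content also matches. But the way you verify surjectivity diverges from the paper and runs into trouble, and there is also a small logical slip in your injectivity remark.

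On injectivity: you assert that the composite $\widetilde{K}_0(V^{\geq k}\End(A)) \to \widetilde{K}_0(\End(A)) \xrightarrow{\ch} (1+A[[t]])^\times_{\text{rat}}$ is injective ``because $\ch$ is injective.'' That inference is backwards: for $g\circ f$ to be injective when $g$ is injective you still need $f$ injective, and injectivity of $K_0$ of a full exact subcategory closed under extensions is genuinely not automatic (think of torsion abelian groups inside finitely generated abelian groups). The paper also treats this as a ``clear inclusion'' without argument, so you are not worse off than the source, but you should not present a non-sequitur as though it settled the point.

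On surjectivity: you correctly notice that the companion matrices of $p$ and $q$ in a representation $f = p/q$ need not individually lie in $V^{\geq k}\End(A)$, and to get around this you propose showing $(1+A[[t]])^\times_{\text{rat},\geq k}$ is generated as a group by $(1-at^j)$ for $j \geq k$. That claim is itself nontrivial — a rational power series of valuation $\geq k$ is an \emph{infinite} product $\prod_{j\geq k}(1-a_jt^j)$ in general, and it is not evident that rationality forces the product to be finite or to live in the subgroup finitely generated by those factors — so you have traded the obstacle you noticed for an unproved lemma of comparable difficulty. The paper's move is simpler and direct: given $f = p/q$ with $f$ of valuation $\geq k$, one has $p \equiv q \pmod{t^k}$; letting $s$ denote the common part of degree $< k$ and $u$ the truncation mod $t^k$ of $s^{-1}$, the polynomials $pu$ and $qu$ both lie in $(1+A[t])^\times_{\geq k}$ and $f = (pu)/(qu)$. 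Now the companion matrices of $pu$ and $qu$ lie in $V^{\geq k}\End(A)$ \emph{by the definition} of that category (inverse image of $(1+A[t])^\times_{\geq k}$ under $\ch$), so no further argument about thick subcategories or $V$-division is needed. In short: the missing idea in your proposal is that the fraction $p/q$ can always be rechosen so that numerator and denominator separately have valuation $\geq k$; once you have that, the companion-matrix lift lands where you want it immediately.
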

\begin{proof}
There is a clear inclusion map
\[
\begin{tikzcd}
\widetilde{K}_0 (V^{\geq k} \End(A))  \ar{r} & V^{\geq k} \widetilde{K}_0 (\End(A))
\end{tikzcd}
\]
We need only show that it is surjective. An element in $V^{\geq k} \widetilde{K}_0 (\End(A))$ can be written as a fraction of polynomials $p(x)/q(x)$. Each of $p(x), q(x)$ must be in $V^{\geq k} \widetilde{K}_0 (\End(A))$ as well. Lift these $p(x), q(x)$ to endomorphisms $T_p$ and $T_q$. By definition these endomorphisms are in $V^{\geq k} \End(A)$. Thus, we have lifted an element in $V^{\geq k} \widetilde{K}_0 (\End(A))$ to a virtual endomorphism $T_p - T_q$ and we are done. 
\end{proof} 

The $V$-filtration is a decreasing filtration:

\[
\End(A) \supset V^{\geq 1} \End (A) \supset V^{\geq 2} \End(A) \supset \cdots 
\]

and it descends to a filtration on $K_0$:
\[
\widetilde{K}_0 (\End(A)) \supset V^{\geq 1} \widetilde{K}_0 (\End(A)) \supset V^{\geq 2} \widetilde{K}_0 (\End(A)) \supset 
\]

Furthermore, the filtration behaves well with respect to the two binary operations. 

\begin{lem}
  Addition and multiplication respect the $V$-filtration on $\widetilde{K}_0 (\End(A))$ in the following sense
  \begin{align*}
    V^{\geq k_1} \widetilde{K}_0 (\End(A)) + V^{\geq k_2} \widetilde{K}_0 (\End(A)) \subset V^{\min\{k_1,k_2\}} \widetilde{K}_0 (\End(A)) \\
    V^{\geq k_1} \widetilde{K}_0 (\End(A)) V^{\geq k_2} \widetilde{K}_0 (\End(A)) \subset V^{\operatorname{lcm} (k_1, k_2)} \widetilde{K}_0 (\End(A)) 
  \end{align*}
\end{lem}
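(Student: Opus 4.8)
The plan is to reduce both containments to statements about the generators of the $V$-filtration, namely the Verschiebung endomorphisms $V^j(a)$ with $j \geq k$, using the description (noted in the remark) that $V^{\geq k}\widetilde{K}_0(\End(A))$ is generated by the images of all $V^j$ with $j \geq k$. Because the characteristic polynomial is an \emph{isomorphism} of rings onto $(1+A[[t]])^\times_{\text{rat}}$ (Almkvist's theorem, \cref{thm:workhorse}), and $V^{\geq k}\widetilde{K}_0(\End(A))$ is by definition the preimage of $(1+A[[t]])^\times_{\geq k}$, it suffices to verify the corresponding inclusions for the Witt-ideals $(1+A[[t]])^\times_{\geq k}$ inside $(1+A[[t]])^\times$; alternatively, and perhaps more in the spirit of the paper, one can verify the inclusions directly on iterated traces, since $\tr^\bullet$ detects everything in the torsion-free case and functoriality plus density port the result to arbitrary $A$.

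For the additive statement: an element of $V^{\geq k_1}\widetilde{K}_0(\End(A))$ has characteristic polynomial of the form $1 + (\text{terms of degree} \geq k_1)$, and likewise for $k_2$; addition in $\widetilde{K}_0(\End(A))$ corresponds to multiplication of power series, and the product of $1 + O(t^{k_1})$ with $1 + O(t^{k_2})$ is $1 + O(t^{\min\{k_1,k_2\}})$, which is exactly the claim. (On iterated traces this is even more transparent: $\tr^i$ of an element of $V^{\geq k}$ vanishes for $i < k$ by \cref{lem:v_lemma}, and the trace is additive.)

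For the multiplicative statement: by bilinearity it is enough to treat generators, so take $f \in \operatorname{im}(V^{m})$ with $m \geq k_1$ and $g \in \operatorname{im}(V^{n})$ with $n \geq k_2$; more precisely, write $f = V^m(f')$ and $g = V^n(g')$. Using \cref{other_v_f} (Frobenius reciprocity, together with commutativity of $\otimes$) one rewrites $V^m(f') \otimes V^n(g')$ as $V^m\big(f' \otimes F^m V^n(g')\big)$, and then \cref{v_f_relation} gives $F^m V^n(g') = \gcd(m,n)\, V^{n/\gcd(m,n)} F^{n/\gcd(m,n)}(g')$; applying reciprocity once more and collecting Verschiebungs shows the product lies in the image of $V^{\operatorname{lcm}(m,n)}$, since $m \cdot \frac{n}{\gcd(m,n)} = \operatorname{lcm}(m,n)$. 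As $\operatorname{lcm}(m,n) \geq \operatorname{lcm}(k_1,k_2)$ whenever $m \geq k_1$ and $n \geq k_2$ (because $\operatorname{lcm}$ is monotone in each argument), we land in $V^{\geq \operatorname{lcm}(k_1,k_2)}\widetilde{K}_0(\End(A))$. The one genuine subtlety — the main obstacle — is making the ``generated by images of $V^j$'' description precise enough to justify the bilinearity reduction: a general element of $V^{\geq k}\widetilde{K}_0(\End(A))$ is a $\mathbb{Z}$-linear combination of such $V^j(a)$'s only after passing to $K_0$, so one should either argue on the power-series side (where the ideal statement of \cref{lem:truncated_iso} does all the work and the $\operatorname{lcm}$ claim reduces to $(1-at^m) \ast (1-bt^n)$ landing in degree $\operatorname{lcm}(m,n)$, which follows from the defining formula $(1-at)\ast(1-bt)=(1-abt)$ after substituting $t \mapsto t^m$, $t\mapsto t^n$), or invoke the presentation of \cref{sec:presentation}. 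I would take the power-series route for cleanliness: both inclusions then follow from \cref{lem:truncated_iso} and \cref{rational_filtered_equivalence} together with the elementary fact that $\ast$ of a degree-$\geq m$ and a degree-$\geq n$ rational power series has valuation $\geq \operatorname{lcm}(m,n)$.
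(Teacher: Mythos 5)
The multiplicative inclusion, as stated, is in fact false, and your proof contains precisely the step one must get wrong for it to seem true: $\operatorname{lcm}$ is \emph{not} monotone in each argument. For instance $\operatorname{lcm}(6,6)=6 < 12=\operatorname{lcm}(4,6)$ even though $6\geq 4$. This is not cosmetic. Take $k_1=4$, $k_2=6$, so $\operatorname{lcm}(k_1,k_2)=12$. Then $V^6(a)\in V^{\geq 4}\widetilde{K}_0(\End(A))$ (its characteristic polynomial $1-at^6$ has no terms in degrees $1,2,3$) and $V^6(b)\in V^{\geq 6}\widetilde{K}_0(\End(A))$, but by \cref{thm:presentation} the product is $6V^6(ab)$, whose characteristic polynomial $(1-abt^6)^6 = 1-6abt^6+\cdots$ already has a nonzero coefficient in degree $6$; it lies in $V^{\geq 6}$ but not in $V^{\geq 12}$. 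So the claimed containment fails.

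Your reduction to pure Verschiebung generators $V^m(a)$, $m\geq k_1$, and $V^n(b)$, $n\geq k_2$, is otherwise sound, but the bound it actually gives from $\operatorname{lcm}(m,n)\geq m\geq k_1$ and $\operatorname{lcm}(m,n)\geq n\geq k_2$ is $\operatorname{lcm}(m,n)\geq \max\{k_1,k_2\}$, not $\geq\operatorname{lcm}(k_1,k_2)$. The same problem appears in your power-series formulation: a rational power series in $(1+A[[t]])^\times_{\geq k_1}$ need not be of the special form $1-at^{k_1}$, so the claim that the Witt product has valuation $\geq\operatorname{lcm}(k_1,k_2)$ does not hold for general elements of the ideals. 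The correct (and presumably intended) statement is
\[
V^{\geq k_1}\widetilde{K}_0(\End(A))\cdot V^{\geq k_2}\widetilde{K}_0(\End(A)) \subset V^{\geq\max\{k_1,k_2\}}\widetilde{K}_0(\End(A)),
\]
and all three of your routes (power-series valuation, iterated traces via $\tr^i(fg)=\tr^i(f)\tr^i(g)$, or the \cref{thm:presentation} formula) do establish this once the false monotonicity step is removed; this is also what the paper's one-line proof delivers, since $\tr^i(fg)$ vanishes as soon as one of the two factors does, i.e.\ for $i<\max\{k_1,k_2\}$. Your argument for the additive inclusion is correct as written.
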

\begin{proof}
Follows from the additive and multiplicative properties of traces. 
\end{proof}

\begin{rmk}
The filtration is not a multiplicative filtration since that usually requires $F^m \cdot F^n \subset F^{m+n}$. 
\end{rmk}

Now we consider quotients in the filtration.  We abbreviate the quotients $\widetilde{K}_0 (\End(A)) / V^{\geq j} \widetilde{K}_0 (\End(A))$  to $\widetilde{K}_0 (\End(A))/V^j$.  The filtration induces maps between these quotients, and thus a tower 
\[
\widetilde{K}_0 (\End(A))/V \leftarrow \widetilde{K}_0 (\End(A))/V^2 \leftarrow \widetilde{K}_0 (\End(A))/V^2 \leftarrow 
\]

This leads to the central definition. 

\begin{defn}
  The \textbf{$V$-completion} of $\widetilde{K}_0 (\End(A))$ is the limit
  \[
  \varprojlim_k \widetilde{K}_0 (\End(A))/V^j
  \]
\end{defn}

\begin{thm}
  The generalized Teichm\"{u}ller map induces an isomorphism of (topological) rings
  \[
  \begin{tikzcd}W(A) \ar{r}{\cong} & \varprojlim_k \widetilde{K}_0 (\End(A))/V^j\end{tikzcd}
  \]
\end{thm}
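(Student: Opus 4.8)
The plan is to produce the inverse map directly and then check the two composites are identities, leveraging the fact (\cref{thm:workhorse} together with \cref{rational_filtered_equivalence} and \cref{lem:truncated_iso}) that each truncation quotient is already understood. First I would observe that the characteristic polynomial induces, for each $n$, a chain of isomorphisms
\[
\widetilde{K}_0(\End(A))/V^n \;\cong\; (1+A[[t]])^\times_{\text{rat}}/(1+A[[t]])^\times_{\text{rat},\geq n} \;\cong\; (1+A[[t]])^\times/(1+A[[t]])^\times_{\geq n} \;\cong\; W_n(A),
\]
where the first isomorphism is the definition of the $V$-filtration, the second is \cref{rational_filtered_equivalence}, and the third is \cref{lem:truncated_iso}. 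These isomorphisms are compatible with the restriction maps $R$ in the tower (the $V$-filtration is defined as the preimage of the $t$-adic-type filtration on power series, and truncation of power series is exactly the restriction on Witt vectors), so passing to the limit gives a ring isomorphism $\varprojlim_n \widetilde{K}_0(\End(A))/V^n \cong \varprojlim_n W_n(A) = W(A)$, where the last equality is the standard presentation of $W(A)$ recalled in \cref{sec:witt}.

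Next I would check that this limiting isomorphism is, on each truncation, inverse to the map induced by the generalized Teichm\"uller map $\tau\colon W_n(A) = A^{\times n} \to \widetilde{K}_0(\End(A))$. Since $\tau(a_1,\dots,a_n) = \bigoplus_i V^i(a_i)$ and $\ch(V^i(a_i)) = 1 - a_i t^i$, the composite $\ch\circ\tau$ sends $(a_1,\dots,a_n)$ to $\prod_i (1-a_i t^i)$, which is precisely the classical identification of $W(A)$ with $(1+A[[t]])^\times$ recalled before \cref{lem:truncated_iso}; modulo $(1+A[[t]])^\times_{\geq n+1}$ this is the stated isomorphism $W_n(A) \xrightarrow{\sim} (1+A[[t]])^\times/(1+A[[t]])^\times_{\geq n+1}$. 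Hence $\tau$ descends to an isomorphism $W_n(A) \to \widetilde{K}_0(\End(A))/V^{n+1}$ for each $n$ (that it is well-defined on the quotient is immediate, since $V^i(a_i) \in V^{\geq i}\End(A)$ for $i > n$). These maps are again compatible with the restriction maps, so taking the limit yields the desired ring isomorphism $W(A) \xrightarrow{\cong} \varprojlim_n \widetilde{K}_0(\End(A))/V^n$.

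Finally, I would confirm that this is an isomorphism of \emph{topological} rings: the source $W(A)$ carries the $V$-topology defined by the $V^{\geq n}W(A)$, the target carries the inverse-limit topology, and under the identifications above the filtration $V^{\geq n}W(A)$ corresponds exactly to the kernel of the projection to $\widetilde{K}_0(\End(A))/V^n$, so the bijection is bicontinuous essentially by construction. The main obstacle is not any single calculation but rather the bookkeeping of \emph{compatibility}: one must check that the three isomorphisms assembling $\widetilde{K}_0(\End(A))/V^n \cong W_n(A)$ genuinely commute with restriction maps on both ends, and that $\tau$ is compatible with restriction (it is, because $\tau$ on $W_{n}(A)$ and on $W_{n+1}(A)$ differ only by the extra summand $V^{n+1}(a_{n+1})$, which lies in $V^{\geq n+1}$ and hence dies in the quotient by $V^{n+1}$). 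Once that compatibility is in place, multiplicativity and additivity of the limiting map are inherited from the corresponding facts at each finite stage, which in turn follow from \cref{thm:workhorse} and \cref{lem:truncated_iso}.
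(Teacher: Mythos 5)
Your proposal is correct and follows essentially the same route as the paper's proof: identify each truncation quotient $\widetilde{K}_0(\End(A))/V^n$ with $W_n(A)$ by combining \cref{thm:workhorse}, \cref{rational_filtered_equivalence}, and \cref{lem:truncated_iso}, verify that the Teichm\"uller map induces these identifications compatibly with the restriction maps in the tower, and pass to the limit. The only superficial differences are that the paper organizes the first step as a diagram of ideal--quotient sequences and mentions $\varprojlim^1$-vanishing, whereas you phrase it as a chain of isomorphisms and add an explicit bicontinuity remark; the substance is the same.
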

\begin{proof}

  We work with the truncations.
  
  First, the characteristic polynomial descends to an isomorphism of rings
  \[
\begin{tikzcd}
\widetilde{K}_0 (\End(A))/V^n \ar{r}{\cong} & (1+A[[t]])^{\times}_{rat}/(1+A[[t]])^{\times}_{\geq n, rat}
\end{tikzcd}
\]
To see this, we have sequences of ideals and quotients:
\[
\begin{tikzcd}
V^{\geq n} \widetilde{K}_0 (\End(A)) \ar{r}\ar{d}{\cong} & \widetilde{K}_0 (\End(A)) \ar{r}\ar{d}{\cong} & \widetilde{K}_0 (\End(A))/V^n\ar{d} \\
(1+A[[t]])^{\times}_{\geq n , rat}\ar{r} & (1 + A[[t]])^{\times}_{rat}\ar{r} & (1+A[[t]])^{\times}_{\leq n, rat}
\end{tikzcd}
\]

In the diagram above, the first isomorphism is by definition, and the second is Almkvist's theorem.

Second, the generalized Teichm\"{u}ller map descends to an isomorphism of rings. Recall that this is the map $W_n (A) \to \widetilde{K}_0 (\End(A))/V^n$ given by  $(a_1, \dots, a_n) \mapsto \bigoplus^n_{i=1} V^i(a_i)$
followed by the quotient map. To see it is an isomorphism of rings we consider the composition
\[
\begin{tikzcd}
W_n (A) \ar{r} & \widetilde{K}_0 (\End(A))/V^n \ar{r}{\cong} & (1+A[[t]])^{\times}_{rat}/(1+A[[t]])^{\times}_{\geq n, rat}
\end{tikzcd}
\]
As we have just shown, the second map is an isomorphism of rings, and the composite is an isomorphism by \cref{lem:truncated_iso} and \cref{rational_filtered_equivalence}. Thus $W_n (A) \cong \widetilde{K}_0 (\End(A))/V^n$. 

Finally, we consider the diagram
\[
\begin{tikzcd}
W_{n+1} (A) \ar{d} \ar{r}{\cong} & \widetilde{K}_0 (\End(A))/V^{n+1} \ar{d}\\
W_n (A) \ar{r}{\cong} & \widetilde{K}_0 (\End(A))/V^n
\end{tikzcd}
\]
It is not hard to see that the diagram commutes. Upon taking limits, and noting the vanishing of $\varprojlim^1$ terms by surjectivity, we get the result. 
\end{proof}

In general, we thus suggest the following definition for Witt vectors. 

\begin{defn}
Let $A$ be a ring (which may or may not be commutative). Then 
\[
\overline{W}(A) := \varprojlim_j\widetilde{K}_0 (\End(A))/V^j
\]
\end{defn}

\begin{rmk}
We have shown that for a commutative rings $\overline{W}(A) \cong W(A)$. In future work, we hope to show that the definition also coincides for non-commutative rings. It should be true, and should go through the identification of topological restriction homology, $\TR$ and Hesselholt's identification of the non-commutative Witt vectors of $A$ with $\TR(A)$ \cite[Thm.~2.2.9]{hesselholt_noncommutative}.  
\end{rmk}

There are a few advantages to this definition. One is it allows us to define operations on Witt vectors in a categorical way.

\begin{prop}
Let $G$ be an endofunctor $G: \End(A) \to \End(A)$ such that the induced map $G: \widetilde{K}_0(\End(A)) \to \widetilde{K}_0(\End(A))$ is continuous in the $V$-topology. Then $G$ descends to $\overline{W}(A)$. 
\end{prop}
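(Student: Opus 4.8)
The plan is to use the universal property of the inverse limit. We have $\overline{W}(A) = \varprojlim_j \widetilde{K}_0(\End(A))/V^j$, so to define an operation $\overline{G}\colon \overline{W}(A) \to \overline{W}(A)$ it suffices to produce a compatible system of maps $\overline{W}(A) \to \widetilde{K}_0(\End(A))/V^j$. The natural candidate is the composite of the projection $\overline{W}(A) \to \widetilde{K}_0(\End(A))/V^j$ with a map $\widetilde{K}_0(\End(A))/V^j \to \widetilde{K}_0(\End(A))/V^j$ induced by $G$. So the heart of the matter is showing that $G$, which a priori only acts on $\widetilde{K}_0(\End(A))$ itself, descends to each finite quotient $\widetilde{K}_0(\End(A))/V^j$.

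First I would unwind what ``continuous in the $V$-topology'' means: since the $V$-filtration is a decreasing filtration by subgroups (in fact ideals, by the earlier lemma on addition and multiplication respecting the filtration) defining the topology, continuity of the additive map $G\colon \widetilde{K}_0(\End(A)) \to \widetilde{K}_0(\End(A))$ is equivalent to the statement that for every $j$ there exists $k = k(j)$ with $G(V^{\geq k}\widetilde{K}_0(\End(A))) \subseteq V^{\geq j}\widetilde{K}_0(\End(A))$. Consequently $G$ induces a well-defined group homomorphism $\widetilde{K}_0(\End(A))/V^{k} \to \widetilde{K}_0(\End(A))/V^{j}$ for each $j$ (with $k \geq j$ as needed). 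These fit together: for $j' \leq j$ the induced maps are compatible with the restriction maps in the tower, since they all come from the single map $G$ on the un-quotiented ring. Passing to the limit on the source — using that $\varprojlim_k \widetilde{K}_0(\End(A))/V^k = \overline{W}(A)$ — yields a compatible family $\overline{W}(A) \to \widetilde{K}_0(\End(A))/V^j$, hence the desired map $\overline{G}\colon \overline{W}(A) \to \overline{W}(A)$.

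Finally I would observe that this $\overline{G}$ is the unique continuous extension of $G$ in the sense that the square
\[
\begin{tikzcd}
\widetilde{K}_0(\End(A)) \ar{r}{G}\ar{d} & \widetilde{K}_0(\End(A)) \ar{d}\\
\overline{W}(A) \ar{r}{\overline{G}} & \overline{W}(A)
\end{tikzcd}
\]
commutes, where the vertical maps are the canonical maps to the completion; this is immediate by tracking elements through the finite stages. If one wants $\overline{G}$ to respect whatever algebraic structure $G$ respects (additivity, or multiplicativity, or being a $\lambda$-operation), that follows formally because each induced map on the finite quotient $\widetilde{K}_0(\End(A))/V^j$ inherits the structure — here one uses precisely that the $V^{\geq j}$ are ideals, so the quotients are rings and $G$ descends to a map of the appropriate kind — and the limit of such maps retains it.

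The main obstacle, such as it is, is purely bookkeeping: making sure the function $j \mapsto k(j)$ can be chosen so that the induced maps on quotients are genuinely compatible across the whole tower, i.e. that one does not need to make incompatible choices at different levels. This is handled by noting that any valid $k(j)$ works and that enlarging $k$ only makes the containment easier, so one may take $k(j)$ monotone in $j$; then compatibility is automatic since everything descends from the fixed map $G$ on $\widetilde{K}_0(\End(A))$. Nothing deep is required beyond the continuity hypothesis and the fact, established earlier, that the $V$-filtration consists of ideals.
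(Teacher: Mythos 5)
The paper actually provides no proof of this proposition at all; it is stated and immediately followed by the corollary about the norm, so there is nothing in the text for you to match against. Your argument is the natural one and is essentially what the author must have had in mind: continuity gives a function $j \mapsto k(j)$ with $G(V^{\geq k(j)}) \subseteq V^{\geq j}$, hence compatible maps on the finite quotients, hence a map on the limit.

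One point deserves a careful look, though. You unwind ``continuous'' by noting that for an \emph{additive} map, continuity is equivalent to the existence of $k(j)$ with $G(V^{\geq k(j)}) \subseteq V^{\geq j}$. That equivalence uses additivity twice: first to reduce continuity everywhere to continuity at $0$, and second to upgrade $G(V^{\geq k}) \subseteq V^{\geq j}$ to the genuinely needed statement that $x - y \in V^{\geq k}$ implies $G(x) - G(y) \in V^{\geq j}$ (which is what makes $G$ well-defined on the quotient $\widetilde{K}_0(\End(A))/V^k \to \widetilde{K}_0(\End(A))/V^j$). For a non-additive $G$ — and the very next corollary applies this proposition to the \emph{norm}, which the paper explicitly says is not additive — pointwise continuity does not by itself give either step, since $\widetilde{K}_0(\End(A))$ is not compact in the $V$-topology. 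So either ``continuous'' in the proposition should be read as ``uniformly continuous'' (equivalently: satisfies the descent condition directly), or the norm case needs the descent condition checked by hand rather than deduced from continuity. This is arguably a looseness in the paper's own statement rather than a defect in your argument; it would strengthen your write-up to flag it, and to phrase the hypothesis you actually use as the descent condition itself, which is what goes into the inverse-limit argument in any case.

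Otherwise the structure of your proof — descend to each finite stage using ideals, check compatibility over the tower using monotonicity of $k(j)$, pass to the limit, and transport any ring-theoretic structure — is correct, and the observation that each $V^{\geq j}$ is an ideal (so the quotients are rings) is the right supporting fact.
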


\begin{cor}
There is a (multiplicative) norm map on $\overline{W}(A)$ given by the multiplicative Verschiebung.
\end{cor}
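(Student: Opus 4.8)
The plan is to invoke the previous proposition with $G$ taken to be the multiplicative Verschiebung (Norm) functor $N^i \colon \End(A) \to \End(A)$ of \cref{defn:mult_verb}. There is one caveat that must be handled up front: the Norm does not descend to an additive endomorphism of $\widetilde{K}_0(\End(A))$, only to a multiplicative (monoid) map, so the cited proposition cannot be quoted verbatim. The first step is therefore to observe that the proof of that proposition only uses continuity of $G$ with respect to the $V$-topology, not additivity: a $V$-continuous map of filtered rings preserves the tower $\{\widetilde{K}_0(\End(A))/V^j\}$ and hence passes to the limit $\overline{W}(A)$, and if it is multiplicative on the nose it remains multiplicative on the limit. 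So what really must be checked is that $N^i$ is continuous in the $V$-topology, i.e. that for every $k$ there is an $\ell$ with $N^i\bigl(V^{\geq \ell}\widetilde{K}_0(\End(A))\bigr) \subseteq V^{\geq k}\widetilde{K}_0(\End(A))$.

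Second, I would verify this containment using the trace description of the filtration: $f \in V^{\geq \ell}$ means $\tr(f^m) = 0$ for all $m < \ell$ (equivalently $\ch(f) \in (1+A[[t]])^\times_{\geq \ell}$). From the theorem computing $\tr^i(N^{\otimes d} f)$ in \cref{sec:verification} — namely $\tr^i(N^{\otimes d} f) = \tr\bigl(f^{i/\gcd(d,i)}\bigr)^{\gcd(d,i)}$ — one sees that $\tr^i(N^i f)$ vanishes whenever $i/\gcd(i,i) = 1$ fails to reach depth $\ell$; more precisely $\tr^i(N^i f) = \tr(f^{i/\gcd(i,i)})^{\gcd(i,i)}$, so for $f \in V^{\geq \ell}$ the value $\tr^i(N^i f)$ is a power of $\tr(f^{i/\gcd(i,i)})$, which is zero as soon as $i/\gcd(i,i) < \ell$. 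Choosing $\ell = k$ (or any convenient bound that forces $i/\gcd(i,i) < \ell$ for all $i < k$) gives $N^i(V^{\geq \ell}) \subseteq V^{\geq k}$, which is the continuity statement. Since the Norm is built from a functor on $\End(A)$, this containment is induced by a genuine functor-level inclusion $N^i\bigl(V^{\geq \ell}\End(A)\bigr) \subseteq V^{\geq k}\End(A)$, and the earlier identification $V^{\geq k}\widetilde{K}_0(\End(A)) \cong \widetilde{K}_0(V^{\geq k}\End(A))$ lets one phrase everything categorically if desired.

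Third, having established $V$-continuity, the map $N^i$ induces compatible maps $\widetilde{K}_0(\End(A))/V^j \to \widetilde{K}_0(\End(A))/V^{j'}$ (with the appropriate shift in indices from the previous paragraph) and hence a well-defined map on $\overline{W}(A) = \varprojlim_j \widetilde{K}_0(\End(A))/V^j$; multiplicativity is inherited degreewise and survives the limit. When $A$ is commutative this recovers, via the identification $\overline{W}(A) \cong W(A)$ of the preceding theorem and the trace computation above, exactly Angeltveit's norm.

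The main obstacle is the bookkeeping in the second step: one must be a little careful that $N^i$ is genuinely filtration-decreasing rather than merely filtration-preserving, and pin down the correct relationship between the index $i$ of the Norm, the shift it induces on the $V$-filtration, and the divisibility bounds $i/\gcd(i,i)$ — getting this bound right (and consistent with \cref{v_f_relation} and the Norm-trace theorem of \cref{sec:verification}) is the only place the argument has any content; everything else is formal passage to a limit.
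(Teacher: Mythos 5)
The paper states this corollary without proof, as an immediate consequence of the preceding proposition, so there is no ``paper's proof'' to compare against; you are filling in an argument the author leaves implicit. Your key observation---that the Norm is only multiplicative, not additive, so the preceding proposition cannot be quoted verbatim and one must instead check $V$-continuity directly and then pass to the limit multiplicatively---is correct and is exactly the point worth flagging.

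However, the verification in your second step has a genuine notational collision that makes the stated bound vacuous. You fix the Norm index as $i$, but then also run the trace index over $i$, so you compute $\gcd(i,i)=i$ and $i/\gcd(i,i)=1$, which is always less than any $\ell\ge 2$ and hence carries no information. The formula from the paper reads, with a Norm index $d$ and an independent trace index $m$,
\[
\tr^m\bigl(N^d f\bigr)=\tr\bigl(f^{\,m/\gcd(d,m)}\bigr)^{\gcd(d,m)}.
\]
For $f\in V^{\ge \ell}$ one needs $m/\gcd(d,m)<\ell$ to force vanishing, and since $m/\gcd(d,m)\le m$, requiring $m<k$ and taking $\ell=k$ does the job; in fact $N^d\bigl(V^{\ge k}\bigr)\subseteq V^{\ge k}$, so the Norm preserves the filtration with no shift at all. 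Your final answer $\ell=k$ is correct, but the displayed justification (``$i/\gcd(i,i)<\ell$ for all $i<k$'') is a tautology rather than the needed estimate; replace $\gcd(i,i)$ by $\gcd(d,m)$ throughout.

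One further subtlety that neither the paper nor your proposal confronts head-on: since $N^d$ is not additive, it is not entirely automatic how $N^d$ is defined on a general virtual endomorphism $[T_1]-[T_2]\in\widetilde{K}_0(\End(A))$, nor what ``continuity in the $V$-topology'' should mean for a non-additive map when the topology is defined by additive subgroups. Your argument, like the paper's, implicitly relies on the Norm being defined on honest endomorphisms and on the fact that every class in $\overline{W}(A)$ admits a Teichm\"uller representative $\bigoplus V^i(a_i)$ on which the Norm can be computed via \cref{thm:presentation}; making that explicit would close the remaining gap.
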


Another advantage is that the Witt vectors only depend on the underlying category of finitely generated projective modules. Then, accepting this as the definition, we have the following theorem. 

\begin{thm}
The Witt vectors are Morita invariant: if $A, B$ are Morita equivalent rings, then $\overline{W}(A) \cong \overline{W}(B)$. 
\end{thm}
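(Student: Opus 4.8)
The plan is to show that Morita equivalence of rings $A$ and $B$ induces an equivalence of the relevant exact categories and, crucially, that this equivalence respects all the structure used to define $\overline{W}$, namely the $V$-filtration. First I would recall that a Morita equivalence $A\text{-}\mathrm{Mod}\simeq B\text{-}\mathrm{Mod}$ restricts to an exact equivalence $\mathrm{Proj}(A)\simeq\mathrm{Proj}(B)$ of the categories of finitely generated projective modules, implemented by tensoring with a bimodule $P$ (an $(A,B)$-progenerator). This equivalence lifts to an exact equivalence $\Phi\colon\End(A)\xrightarrow{\sim}\End(B)$ sending $(Q,f)$ to $(P\otimes_A Q,\,\mathrm{id}_P\otimes f)$, since an endomorphism of $Q$ is carried to an endomorphism of $P\otimes_A Q$, and exactness is preserved because $P$ is projective (in fact a progenerator, so the functor is exact and an equivalence). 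Hence $\Phi$ induces a ring isomorphism $\widetilde{K}_0(\End(A))\cong\widetilde{K}_0(\End(B))$: it is a ring map because Morita equivalences are (symmetric) monoidal up to natural isomorphism for the relevant $\otimes$, and it sends the split surjection to $K_0(A)$ to the one to $K_0(B)$, hence restricts to the reduced groups.

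Next I would check that this isomorphism is filtered, i.e. it carries $V^{\geq k}\widetilde{K}_0(\End(A))$ isomorphically onto $V^{\geq k}\widetilde{K}_0(\End(B))$. By the earlier \cref{rmk} (the thick-subcategory description) and the preceding proposition identifying $V^{\geq k}\widetilde K_0(\End(A))$ with $\widetilde K_0(V^{\geq k}\End(A))$, it suffices to observe that $\Phi$ carries the Verschiebung functors $V^j$ on $\End(A)$ to those on $\End(B)$ (up to natural isomorphism), since $V^j(Q,f)=(Q^{\oplus j},V^j(f))$ is built from $\oplus$ and $f$, both of which $\Phi$ preserves. Therefore $\Phi$ sends the thick subcategory generated by the images of $V^j$, $j\geq k$, to the corresponding one for $B$, so the filtrations match. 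Consequently $\Phi$ induces ring isomorphisms on every quotient $\widetilde{K}_0(\End(A))/V^j\cong\widetilde{K}_0(\End(B))/V^j$ compatibly with the restriction maps in the tower, and passing to the limit gives the desired isomorphism $\overline{W}(A)\cong\overline{W}(B)$.

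The main obstacle I anticipate is the bookkeeping around monoidal structure: the tensor product $\otimes$ used to define the ring structure on $\widetilde{K}_0(\End(A))$ is $\otimes_A$, and a Morita equivalence does not literally commute with $\otimes_A$ on the nose. One must instead verify that for the progenerator bimodule $P$ the natural comparison map makes $\Phi$ lax/strong monoidal in the appropriate sense — concretely, that $[\Phi(Q_1,f_1)]\cdot[\Phi(Q_2,f_2)]=[\Phi((Q_1,f_1)\otimes(Q_2,f_2))]$ in $\widetilde K_0(\End(B))$. This is where the real work is, but it is a standard feature of Morita theory adapted to the endomorphism setting (and one can sidestep subtleties by noting the ring structure on $\widetilde K_0(\End)$ is already pinned down by the characteristic polynomial / ghost coordinates via \cref{thm:workhorse}, so it is enough to check the isomorphism is multiplicative after mapping to rational power series, where the computation is transparent). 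Once monoidality is in hand, every other step is a routine consequence of the functoriality and filtration-compatibility established above.
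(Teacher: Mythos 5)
The paper gives no proof at all: the theorem is stated as an immediate corollary of the one-sentence observation that ``the Witt vectors only depend on the underlying category of finitely generated projective modules,'' and is followed by a remark acknowledging that the non-commutative case has not actually been checked. So your write-up is genuinely filling a gap, and the overall strategy — transport the exact equivalence $\mathrm{Proj}(A)\simeq\mathrm{Proj}(B)$ to an exact equivalence $\End(A)\simeq\End(B)$, observe that $\Phi$ preserves the Verschiebung functors and hence the thick subcategories $V^{\geq k}\End(-)$, and pass to $K_0$ and then the limit — is exactly the argument the paper is gesturing at.

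The one place you are too optimistic is precisely the ``main obstacle'' you flag and then try to sidestep. The interesting instance of Morita equivalence is the one the paper names: $A$ commutative and $B=M_n(A)$ non-commutative (two Morita equivalent \emph{commutative} rings are already isomorphic). But for non-commutative $B$ there is no internal tensor product $\otimes_B$ of left $B$-modules, so $\End(B)$ is not a bimonoidal category and $\widetilde K_0(\End(B))$ has, as the paper defines it, no intrinsic ring structure at all. Your proposed escape — checking multiplicativity by mapping to rational power series via \cref{thm:workhorse} — does not help, because the characteristic polynomial (a determinant) is likewise unavailable for endomorphisms of modules over a non-commutative ring; \cref{thm:workhorse} is a statement about commutative $A$. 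So what your argument actually proves, cleanly and correctly, is an isomorphism of filtered abelian groups (equivalently, of topological abelian groups) $\overline{W}(A)\cong\overline{W}(B)$; to promote this to a ring isomorphism one must first \emph{define} the ring structure on $\overline{W}(B)$ for non-commutative $B$, e.g.\ by transport of the monoidal structure along the Morita bimodule, at which point multiplicativity is true by fiat rather than something to verify. This is precisely the unchecked compatibility with non-commutative Witt vectors that the paper's remark concedes. With that caveat stated explicitly your proof is complete and matches the paper's intent.

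One smaller point done right: you correctly reach for the thick-subcategory description of $V^{\geq k}\End(A)$ rather than the characteristic-polynomial description, which is essential here since only the former makes sense on the $B$ side.
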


As stated, this is not so useful. The canonical example of Morita equivalent rings are $A$ and the matrix ring $M_n (A)$, and the latter is noncommutative. We have not checked that this definition of Witt vectors agrees with the definition of non-commutative Witt vectors. However, as mentioned, this should hold. 

The Witt vectors do not have any pleasant universal property as far as we know --- and this is perhaps one illustration why: they are a limit built out of colimits built out of a group completion. In order to state a nice universal property, the best way is then to pass to some category already equipped with the structure we see on $\widetilde{K}_0 (\End(A))$. For example, one would want to have both a Frobenius and a Verschiebung which should satisfy certain identities. This perhaps motivates the appearance of pro-$V$-complexes or Cartier modules in much work  that has to do with $K$-theory (see, e.g. \cite{antieau_nikolaus}).

\section{A Presentation}\label{sec:presentation}

In this section we give a presentation of $W(A) \cong \overline{W}(A)$. The idea is that instead of working with algebraic shadows of endomorphisms, we should work with the endomorphisms directly.  Computations in this presentation are very tractable. Addition is an obvious operation, and multiplication is given by an extremely simple formula. No universal polynomials are required, and no torsion assumptions are required --- we are working with \textit{formal} sums, and so the multiplication by elements in $\mathbb{Z}$ never interferes with the ring elements. This is in some sense the source of all the headaches with torsion in the standard treatments of Witt vectors: upon taking traces, these \textit{formal} coefficients begin interacting with the ring elements.

 One of the reasons that Witt arithmetic is so complicated is that it involves two steps: one is the actual arithmetic, and other is converting the answer into a Teichm\"{u}ller representative (see \cref{defn:teichmuller_representative}), or a kind of normal form. By separating out the two steps, each becomes much easier.

To begin, each element of $\overline{W}(A)$ can be represented by a formal sum of Verschiebungs

\begin{align*}
  &c_{11} V^1(a_{11}) \oplus c_{12} V^1 (a_{12}) \oplus \cdots \oplus c_{1n_1} V^1(a_{1n_1}) \oplus\\
  &c_{21} V^2 (a_{21}) \oplus c_{22}  V^2(a_{22}) \oplus \cdots \oplus c_{2n_2} V^2(a_{2n_2})\oplus\\
  &c_{31} V^3 (a_{31}) \oplus c_{32}  V^3(a_{32}) \oplus \cdots \oplus c_{3n_3} V^3(a_{3n_3})\oplus\\
  &\vdots
\end{align*}

as can be seen via the characteristic isomorphism $\operatorname{ch}: \overline{W}(A) \to (1+A[[t]])^\times$.

\begin{rmk}
  We think of these Verschiebungs as the atomic bits of endomorphisms. Adding $V^n(a_n)$ adds some extra iterated traces to the ghost map, or an extra term to the characteristic polynomial, past degree $n$. 
\end{rmk}

Of course, we have the usual Verschiebung and Frobenius operators. We also define a trace operator. 

\begin{defn}
The Verschiebung map is given by the formula
\[
  V^k \left(\sum c_i V^i(a_i)\right) = \sum c_i V^{ik}(a_i)
  \]
\end{defn}

\begin{defn}
  The Frobenius map is given by the formula 
  \[
F^k \left(\sum c_i V^i (a_i) \right) = \sum c_i \operatorname{gcd}(k,i) V^{i/\operatorname{gcd}(k,i)} (a^{k/\operatorname{gcd}(k,i)}_i)
\]
\end{defn}
That is, the action of Frobenius is given by \cref{v_f_relation}. This relation can also be proved using the characteristic polynomial. 

\begin{defn}
The \textbf{trace}, $\tr: \overline{W}(A) \to A$ is given by
\[
\tr\left(\sum c_i V^i(a_i)\right)= c_1 a_1
\]
Note that the trace is a ring homomorphism. It is also important to observe that $c_1 a_1$ is an actual ring element, rather than a formal endomorphism. 
\end{defn}

\begin{rmk}
Though it is only an analogy or curiosity, it is worth mentioning that the trace is really a residue --- it extracts a single well-defined coordinate in a formal series. If one consider the characteristic polynomial with the inverse normalization $\det (\id - tf)^{-1}$, then trace is a residue in the sense of complex analysis. 
\end{rmk}

To avoid use of the ghost map, we redevelop the necessary relations using only the characteristic polynomial.

\begin{prop}\label{prop:necessary_relations}
With definitions as above, the following identities hold
\begin{itemize}
\item $V^1(a) V^i (b) = V^i (F^i(V^1(a)) V^1(b))$ for $a, b$ ring elements in $A$
\item $F^i V^j (a) = \gcd(i,j) V^{i/\gcd(i,j)} F^{j/gcd(i,j)}(a)$ 
\end{itemize}
\end{prop}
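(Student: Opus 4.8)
The plan is to prove both identities by reducing to facts already established in the paper, namely \cref{v_f_relation} (the $F^mV^n$ relation on $\widetilde K_0(\End(A))$) and the characteristic-polynomial isomorphism $\ch\colon \overline{W}(A)\xrightarrow{\cong}(1+A[[t]])^\times_{\mathrm{rat}}$ (completed, via the $V$-filtration, to $(1+A[[t]])^\times$). Since every element of $\overline{W}(A)$ is by definition a formal $\oplus$-sum of terms $c_iV^i(a_i)$, and since the Verschiebung, Frobenius, and the multiplication are all defined coordinatewise (additively, or in the case of multiplication distributively over $\oplus$), it suffices to check both identities on a single ``atom'' $V^i(a)$, respectively on a pair $V^1(a)$, $V^i(b)$ of atoms.

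For the second identity, $F^iV^j(a)=\gcd(i,j)\,V^{i/\gcd(i,j)}F^{j/\gcd(i,j)}(a)$, this is literally the content of \cref{v_f_relation} applied to the endomorphism $V^j(a)$ (with $m=i$, $n=j$, $f=$ multiplication by $a$), together with the fact recorded after \cref{defn:mult_verb} and used throughout \cref{sec:presentation} that $F^j(V^1(a)) = V^1(a^j)$, i.e. $F^{j/\gcd(i,j)}$ applied to the atom $V^1(a)$ is $V^1(a^{j/\gcd(i,j)})$. So I would simply cite \cref{v_f_relation} and unwind the $j=1$ case, or alternatively reprove it with the characteristic polynomial: $\ch(V^j(a))=1-at^j$, and a short computation with $t\frac{d}{dt}\log$ (as in the commutativity proof of \cref{big_diagram}) shows both sides have the same image under $\ch$, hence are equal by Almkvist's theorem.

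For the first identity, $V^1(a)V^i(b)=V^i\bigl(F^i(V^1(a))V^1(b)\bigr)$, I would first observe that the right-hand side simplifies: $F^i(V^1(a))=V^1(a^i)$, so the claim is $V^1(a)\,V^i(b)=V^i(a^i b)$ — but I want to phrase it in the $F$-form because that is the form that generalizes (it is Frobenius reciprocity, \cref{other_v_f}, specialized to $f=V^1(a)$, $g=V^1(b)$, $n=i$, using $V^1(a)\otimes V^1(b)=V^1(ab)$ and the projection formula $f\otimes V^n(g)=V^n(F^n(f)\otimes g)$). Concretely I would compute both sides under $\ch$: the left side is the Witt product $(1-at)\ast(1-bt^i)$, and the right side is $\ch\bigl(V^i(a^ib)\bigr)=1-a^ibt^i$; checking these agree is a direct application of the defining rule $(1-xt)\ast(1-yt)=(1-xyt)$ together with how the Witt product interacts with the substitution $t\mapsto t^i$ encoding $V^i$, which is exactly \cref{lem:truncated_iso}/the compatibility of $\ast$ with the filtration. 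Equivalently, and perhaps more cleanly, I would just cite \cref{other_v_f} directly, since $V^1(a)=\tau_1(a)$ and $V^i(b)=V^i(\tau_1(b))$ in $\widetilde K_0(\End(A))$, and these identities, being polynomial, pass to the completion $\overline{W}(A)$.

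The main obstacle — really the only subtlety — is bookkeeping: making sure the identities, which are proved (via traces or $\ch$) at the level of $\widetilde K_0(\End(A))$ where the $c_i$ are genuine multiplicities of summands, are correctly transported to the formal presentation of $\overline{W}(A)$, where the $c_i$ are formal integer coefficients and one must check that no torsion phenomenon intervenes. But this is handled by the very design of the presentation: the identities in \cref{prop:necessary_relations} involve no division of ring elements by integers, so they hold identically as stated, and the reduction to atoms is legitimate because $V^k$, $F^k$ are additive and multiplication distributes over $\oplus$. I therefore expect the proof to be short: state the reduction to atoms, then invoke \cref{v_f_relation} and \cref{other_v_f} (or give the two-line $\ch$-computations), and note that both identities are polynomial and hence survive the $V$-completion.
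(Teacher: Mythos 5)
Your proposal is correct but takes a genuinely different route from the paper, and in a way that somewhat undercuts the purpose of this section. The paper opens this passage with the explicit goal of re-deriving the needed relations \emph{using only the characteristic polynomial}, precisely to sidestep the torsion-free hypothesis that the trace-based verifications in \cref{sec:verification} require. Your primary plan is to cite \cref{v_f_relation} and \cref{other_v_f}, both of which are proved by checking on iterated traces; quoting them smuggles the torsion issue back in, and discharging it requires the functoriality-plus-density workaround the author is trying to avoid. You are aware of this (``no division of ring elements by integers''), but the resolution you offer is a heuristic, not the paper's route.

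The paper's actual proof is a self-contained determinant computation. For the first identity it writes the two matrices explicitly ($a\cdot V^i(b)$, with $ab$ in the corner and $a$'s on the subdiagonal, versus $V^i(a^ib)$, with $a^ib$ in the corner and $1$'s on the subdiagonal), forms $\det(\mathrm{id}-tM)$ for each, and observes by the ``cyclic trick'' that both equal $1-a^ibt^i$. Note this is \emph{not} what your alternative sketch does: you invoke the Witt product rule $(1-xt)\ast(1-yt)=(1-xyt)$ and ``compatibility of $\ast$ with the filtration,'' i.e.\ you use the $\ast$-ring structure on $(1+A[[t]])^\times$, which is exactly the structure this presentation is meant to \emph{derive} rather than assume; the paper's determinant calculation bypasses it entirely and only needs injectivity of $\ch$. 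For the second identity the paper reduces to primes — it suffices to show $F^pV^p = pV^1$ (immediate at the matrix level) and $F^pV^q = V^qF^p$ for distinct primes $p,q$ (another two small determinants) — a decomposition you don't mention but which keeps the matrix computations bite-sized. Both approaches prove the proposition, but the paper's is the one consistent with the section's stated design, and its prime-reduction for the $F^iV^j$ identity is worth adopting.
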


\begin{rmk}
  The appearance of the $V^1$ terms may seem odd, but we want to distinguish between elements of $\overline{W}(A)$ (i.e. an element in the ring of completed endomorphisms) and a ring element $a \in A$. These two objects behave quite differently. For instance, for $n \in \mathbb{Z}$, the morphisms $nV^1(a)$ and $V^1(na)$ are
  \[
  nV^1(a) =
  \begin{pmatrix}
    a & &  \\
    & \ddots & \\
     & & a  
 \end{pmatrix}
  \qquad
  V^1(na) = (na). 
  \]
  The elision of these two objects is somewhat dangerous. 
\end{rmk}

\begin{proof}[Proof of Proposition]
We check each identity on characteristic polynomials. The matrices for $V^1(a) V^i (b)$ and $V^i (F^i(V^1(a)) V^1(b))$ are
\[
\begin{pmatrix}
0 & \cdots & &0& ab \\
a & \cdots & &0& 0 \\
0 & a & \cdots & 0 &0 \\
  &   &        &   & \\
0 & 0 & \cdots & a & 0\\
\end{pmatrix}
\qquad
\begin{pmatrix}
0 & \cdots & &0& a^ib \\
1 & \cdots & &0& 0 \\
0 & 1 & \cdots & 0 &0 \\
  &   &        &   & \\
0 & 0 & \cdots & 1 & 0\\
\end{pmatrix}
\]
and thus the corresponding characteristic polynomials are computed as the determinants
\[
\begin{pmatrix}
1 & \cdots &\cdots &0& abt \\
-ta & 1 &\cdots &0& 0 \\
0 & -ta & \cdots & 0 &0 \\
  &   &        &   & \\
0 & 0 & \cdots & -ta & 1\\
\end{pmatrix}
\qquad
\begin{pmatrix}
1 & \cdots & &0& -a^ibt \\
-t & 1 & &0& 0 \\
0 & -t & \cdots & 0 &0 \\
  &   &        &   & \\
0 & 0 & \cdots & -t & 0\\
\end{pmatrix}
\]
Using the usual ``cyclic trick'' for computing determinants, these obviously have the same determinant. 

For the second identity it is enough to show that for primes $F^p V^p = p V^1$ and $F^p V^q = V^q F^p$ for distinct primes, $p$, $q$.

The identity $F^p V^p = p V^1$ is true at the level of endomorphisms.

To see $F^p V^q = V^q F^p$ for distinct primes, we simply write the determinants that would verify this:
\[
\begin{pmatrix}
1 & \cdots & &0& a \\
-at & 1 & &0& 0 \\
0 & -at & \cdots & 0 &0 \\
  &   &        &   & \\
0 & 0 & \cdots & -t & 1\\
\end{pmatrix}
\qquad
\begin{pmatrix}
1 & \cdots & &0& a^p \\
-t & 1 & &0& 0 \\
0 & -t & \cdots & 0 &0 \\
  &   &        &   & \\
0 & 0 & \cdots & -t & 1\\
\end{pmatrix}
\]
where in the left matrix there are $p$ copies of $-at$ down the diagonal. Again, their determinants are equal. 

\end{proof}

Finally, we get the following, without recourse to ghost maps. 

\begin{thm}\label{thm:presentation}
  As an abelian group $\overline{W}(A)$ is formal sums of $V^i(a_i)$ with $\mathbb{Z}$-coefficients.  It acquires a ring structure when multiplication is given by 
  \[
  V^i (a_i) V^j (a_j) = \operatorname{gcd} (i, j) V^{\operatorname{lcm}(i,j)} (a^{j/\gcd(i,j)}_i  a^{i/\gcd(i,j)}_j) 
  \]
\end{thm}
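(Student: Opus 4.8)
The plan is to reduce the multiplication formula to the single-Verschiebung case and then verify it by computing characteristic polynomials, exactly in the spirit of \cref{prop:necessary_relations}. By construction $\overline{W}(A)$ sits inside $(1+A[[t]])^\times$ via the characteristic polynomial $\operatorname{ch}$, and a general element is a formal $\mathbb{Z}$-linear combination $\sum c_i V^i(a_i)$, with addition being the obvious formal sum (i.e. power-series multiplication after applying $\operatorname{ch}$). Since $\operatorname{ch}$ is an injective ring map, it suffices to check that the proposed product formula agrees with Witt multiplication on generators $V^i(a_i)$ and $V^j(a_j)$; bilinearity and compatibility with the $\mathbb{Z}$-coefficients then extend it to all of $\overline{W}(A)$, with the one subtlety (flagged in the remark preceding \cref{prop:necessary_relations}) that the integer coefficients are honest formal multiplicities, never absorbed into ring elements.

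The key steps, in order, are as follows. First I would record that $\operatorname{ch}(V^i(a)) = 1 - a t^i$, which is the companion-matrix determinant computation already used in the proof of the commutation relations. Second, I would compute $\operatorname{ch}(V^i(a_i))\ast\operatorname{ch}(V^j(a_j))$ using the defining property of Witt multiplication $\ast$, namely $(1-\alpha t)\ast(1-\beta t) = (1-\alpha\beta t)$ together with its consequence that $(1-\alpha t^i)\ast(1-\beta t^j)$ factors, over a splitting situation where $\alpha,\beta$ become products of roots of unity times scalars, into $\gcd(i,j)$ copies of $(1 - \zeta\, a_i^{j/\gcd(i,j)} a_j^{i/\gcd(i,j)}\, t^{\operatorname{lcm}(i,j)})$ as $\zeta$ ranges over the appropriate roots of unity — and these multiply to $(1 - a_i^{j/\gcd(i,j)} a_j^{i/\gcd(i,j)} t^{\operatorname{lcm}(i,j)})^{\gcd(i,j)}$. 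Third, I would observe that this last power series is exactly $\operatorname{ch}\bigl(\gcd(i,j)\, V^{\operatorname{lcm}(i,j)}(a_i^{j/\gcd(i,j)} a_j^{i/\gcd(i,j)})\bigr)$, since the scalar $\gcd(i,j)$ corresponds to taking a direct sum of $\gcd(i,j)$ copies, which on characteristic polynomials is a $\gcd(i,j)$-th power. Finally, since $\operatorname{ch}$ is an isomorphism onto the rational power series (Almkvist, \cref{thm:workhorse}) and these identities are stable under the $V$-completion, the formula descends to $\overline{W}(A)$; alternatively one avoids any splitting argument by noting the identity is universal and verifying it on $A=\mathbb{Z}[x,y]$ after base change, as is standard for Witt-vector identities, and then citing density of the rational Witt vectors.

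The main obstacle is the factorization in the second step: identifying $(1-a_i t^i)\ast(1-a_j t^j)$ with $\gcd(i,j)$ copies of a single $V^{\operatorname{lcm}(i,j)}$-term. The clean way to see this is to pass to a ring where $a_i, a_j$ are formal variables, extend scalars so that one may write $t^i \rightsquigarrow$ factoring $1-a_i t^i = \prod_{\mu^i = 1}(1-\mu^{-1}a_i^{1/i} t)$ symbolically, use $(1-\alpha t)\ast(1-\beta t)=(1-\alpha\beta t)$ on each pair of factors, and then collect terms: the product $\alpha\beta$ ranges over $\mu a_i^{1/i}\nu a_j^{1/j}$ with $\mu^i=\nu^j=1$, and grouping by the value of $(\mu\nu)^{\operatorname{lcm}(i,j)}$ yields $\gcd(i,j)$ identical blocks. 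I would present this combinatorial bookkeeping carefully but briefly, since it is the one place where a reader could get lost; everything else is the bilinear extension and the already-established dictionary between companion matrices, Verschiebungs, and their characteristic polynomials. As a safety net I would remark that one can sidestep the root-of-unity manipulation entirely by checking the formula on ghost coordinates over a torsion-free ring using \cref{lem:v_lemma} — $\operatorname{tr}^k$ of $V^i(a_i)V^j(a_j)$ is $\operatorname{tr}^k(V^i(a_i))\operatorname{tr}^k(V^j(a_j))$ by multiplicativity of trace, which is nonzero only when $i\mid k$ and $j\mid k$, i.e.\ when $\operatorname{lcm}(i,j)\mid k$, and a direct comparison of the two sides pins down the coefficient $\gcd(i,j)$ and the exponents — and then porting to general $A$ by functoriality and density exactly as in \cref{sec:verification}.
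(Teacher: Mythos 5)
Your proof is correct, but it takes a genuinely different route from the paper's. The paper derives the multiplication formula as a purely symbolic manipulation from the two identities of \cref{prop:necessary_relations}, namely Frobenius reciprocity $V^1(a)V^i(b) = V^i(F^i(V^1(a))V^1(b))$ and the relation $F^iV^j = \gcd(i,j)V^{i/\gcd(i,j)}F^{j/\gcd(i,j)}$; the formula drops out after a short chain of applications of these, never mentioning $\operatorname{ch}$, roots of unity, or ghost coordinates inside the proof of the theorem itself. You instead compute $\operatorname{ch}(V^i(a_i))\ast\operatorname{ch}(V^j(a_j))$ directly, splitting $1-a_it^i$ and $1-a_jt^j$ into linear factors over an extension and counting how many times each $\operatorname{lcm}(i,j)$-th root of unity arises as a product of an $i$-th and a $j$-th root (answer: $\gcd(i,j)$ times), which yields $(1 - a_i^{j/d}a_j^{i/d}t^{\operatorname{lcm}(i,j)})^{\gcd(i,j)}$; this is then recognized as $\operatorname{ch}$ of the claimed right-hand side because $\operatorname{ch}$ turns $\oplus$ into power-series product. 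Both approaches are sound, and you verified the combinatorics correctly. The trade-off: the paper's route keeps the whole derivation intrinsic to the algebra of $F$'s and $V$'s once the two lemmas are in hand, which is more in keeping with the spirit of \cref{sec:presentation} (deriving identities structurally, not by reverse-engineering from coordinates); your route is more self-contained and computational, but requires the splitting/base-change maneuver to be spelled out carefully (universal ring $\mathbb{Z}[x,y]$, functoriality, density), which reintroduces exactly the boilerplate the paper is trying to streamline. Your ghost-coordinate safety net is also correct, but note that it cuts directly against the stated aim of this section — the sentence immediately preceding the theorem is ``Finally, we get the following, without recourse to ghost maps'' — so while it is a valid sanity check, it would be odd to present it as the proof here.
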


\begin{proof}
  As a group, $\overline{W}(A)$ has this presentation. One must only check that the relations on multiplication hold. However, this is a consequence of \cref{prop:necessary_relations}
We compute
  \begin{align*}
    V^i (a_i) V^j (a_j) &= V^j (F^j (V^i (a_i)) V^1(a_j))\\
    &=V^j (\gcd (i, j) V^{i/\gcd(i,j)} F^{j/\gcd(i,j)} V^1(a_i)) V^1(a_j))\\
    &= \gcd(i,j) V^j ((V^{i/\gcd(i,j)} a^{j/(i,j)}_i) V^1(a_j))\\
    &= \gcd(i,j) V^j (V^{i/\gcd(i,j)} (V^1(a^{j/(i,j)}_i)  F^{i/\gcd(i,j)} V^1(a_j))) \\
    &= \gcd (i,j) V^j (V^{i/\gcd(i,j)} (a^{j/\gcd(i,j)}_i a^{i/\gcd(i,j)}_j)\\
    &=  \gcd(i,j) V^{\operatorname{lcm}(i,j)}(a^{j/\gcd(i,j)}_i a^{i/\gcd(i,j)}_j)
  \end{align*}
\end{proof}

\begin{example}
  We return to the example of $(0,a)\ast (0,b)$, which is $V_2 (a) V_2(b)$. By the presentation, we get $2V_2 (ab)$. Indeed, one can then check that under the characteristic polynomial map, this goes to $1-2ab t^2 + a^2 b^2 t^4$, as Grayson gets in \cite{grayson_witt}.  This is even easier than multiplication using endomorphisms and the characteristic polynomial. 
\end{example}

Some remarks are in order.

\begin{rmk}
Identities in this form are well known. However, the derivation of the corresponding identity relies on the same kind of unmotivated trick of desiring a functorial multiplication $\ast$ on $(1+A[[t]])^\times$ such that $(1-at)\ast (1-bt) = (1-abt)$ and then showing that such a multiplication exists. In the above, a multiplication \textit{does} exist on $\overline{W}(A)$ because $\operatorname{End}(A)$ has a tensor monoidal structure. Working out the multiplication only depended on knowing that the characteristic polynomial is injective, so can detect equivalences in $\overline{W}(A)$. Thus, the multiplicative structure is a clear \textit{consequence} rather than a requirement. 
\end{rmk}

\begin{rmk}
This presentation appears to be quite similar to the approach of Cuntz--Deninger \cite{cuntz_deninger}. They essentially work formally in the ghost coordinates (i.e. work on traces). It seems in their approach that the group ring $\mathbb{Z}[A]$ is used to keep track of the difference between $n V^1(a)$ and $V^1(na)$. 
\end{rmk}

\begin{rmk}
We began this paper arguing that Witt vectors are trying to encode of the combinatorics of traces. Interpreting $V^i(a_i)$ as an atomic element of trace, this is exactly what this presentation of $\overline{W}(A)$ is doing. 
\end{rmk}

The presentation above is quite satisfying, but one may still want to express a formal sum $\sum c_{ij} V^i(a_{ij})$ with $c_{ij} \in \mathbb{Z}$ as a sum in the image of the Teichm\"{u}ller map, i.e. of the form $\sum V^i(a_i)$. 

Since this is a useful concept, we make the following definition.

\begin{defn}\label{defn:teichmuller_representative}
  Let $\mbf{a} \in W(A)$. A \textbf{Teichm\"{u}ller representative} for $\mbf{a}$ is an element $\overline{\mbf{a}} \in W(A)$ written in the form $\bigoplus_{i=1} V^i(\overline{a}_i)$ such that $\mbf{a} = \overline{\mbf{a}}$.
\end{defn}

\begin{rmk}
  We sometimes say $\mbf{\overline{a}}$ is in \textbf{Teichm\"{u}ller form} to emphasize the formal similarity with putting a matrix in a canonical form: it amounts to choosing a particular basis to represent endomorphisms. 
\end{rmk}

The following example is useful for intuition:

\begin{example}\label{weird_procedure}
  For this example assume that the ring we are working in is torsion-free. One could imagine trying to build an endomorphism up from what $k$th traces are. \textit{This is exactly what the Witt coordinates do}. Suppose, for example, that $\tr^k (f) = a_k$ and $\tr^i(f) =0$ for $i \leq k$. Then
  \[
  f - \frac{1}{k}V^k(a_k) \in V^{\geq k+1}.  
  \]
  This is really the $V$-division procedure. One could repeat this:
  \begin{align*}
    f - V(\tr(f)) &\in V^{\geq 2} \\
    f - V_2 \left(\frac{1}{2}\tr^2(f - V(\tr(f)))\right) &\in V^{\geq 3}\\
    f - V_3 \left(\frac{1}{3} \tr^3\left(f - \frac{1}{2} V_2 (f - V(\tr(f)))\right)\right) &\in V^{\geq 4} \\
    &\vdots 
  \end{align*}
  The Witt coordinates are then what is subtracted off at each stage. 
\end{example}

If the above example seems familiar, it is because it is formally the same as writing down the Newton polynomials. That is, the Witt coordinates and the ghost coordinates have the same relationships to each other as the symmetric polynomials and power polynomials. Indeed, if we imagine the the endomorphism has having eigenvalues $\lambda_i$, then the iterated traces are $\sum \lambda^k_i$ and the Witt coordinates should be the elementary symmetric polynomials in this coordinates (up to a sign). 

If we choose the normalization of the elementary symmetric polynomials where
\[
\sigma_k = (-1)^k \sum x_{i_1} \cdots x_{i_k} \qquad \sigma_0 = 1
\]
then, we have the identities
\[
-k\sigma^n_k = (-1)^{k-1}\sigma_{k-1} p_1 + (-1)^{k-2}\sigma_{k-2} p_2 + \cdots +  (-1)^1 \sigma_1 p_{k-1} - p_k
\]
which allow for recursive computation of the $\sigma_k$ in terms of $p_k$.

We have up until this point avoided ghost coordinates, but given that we want to translate into Witt coordinates, we can no longer avoid them. However, with the correct definitions, we avoid issues with torsion.

\begin{defn}
  The \textbf{modified trace map}, $\overline{\tr}: \overline{W}(A) \to V^{\geq 1}\overline{W}(A) / V^{\geq 2} \overline{W}(A)$ is given by
  \[
  \begin{tikzcd}
    c_{11} V^1(a_{11}) + c_{12} V^1(a_{12}) + \cdots + c_{1n} V^1(a_{1n}) + \underbrace{\text{higher order terms}}_{\in V^{\geq 2} \overline{W}(A)}\ar[mapsto, d] \\
    c_{11} V^1(a_{11}) + c_{12} V^1(a_{12}) + \cdots + c_{1n} V^1(a_{1n}) 
  \end{tikzcd}
  \]
\end{defn}

This is again some kind of residue. We are not going all the way down to the ring $A$, but instead considering the trace as recording some core, well-defined, term in a power series. 

By the properties we have proved for the multiplicative structure, we immediately have the following

\begin{lem}
The modified trace is a ring homomorphism
\end{lem}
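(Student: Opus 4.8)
The plan is to observe that $\overline{\tr}$ is, after unwinding the definitions, exactly the canonical ring projection of $\overline{W}(A)$ onto a quotient by an ideal, so that the statement follows formally once two bookkeeping facts are in place. First I would record that $V^{\geq 1}\overline{W}(A) = \overline{W}(A)$, since $(1+A[[t]])^\times_{\geq 1}$ is all of $(1+A[[t]])^\times$ and $\ch$ is an isomorphism (Almkvist's theorem, cf.\ \cref{lem:truncated_iso}); hence the target of $\overline{\tr}$ is the quotient $\overline{W}(A)/V^{\geq 2}\overline{W}(A)$, and I will write $q$ for the quotient map. Given any element presented as a formal sum $x = \sum_{i,k} c_{ik} V^i(a_{ik})$, every summand with $i \geq 2$ has characteristic polynomial $1 - a_{ik} t^i \in (1+A[[t]])^\times_{\geq 2}$, hence lies in $V^{\geq 2}\overline{W}(A)$; therefore $q(x) = \sum_k c_{1k} V^1(a_{1k}) \bmod V^{\geq 2}\overline{W}(A) = \overline{\tr}(x)$. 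This identification also disposes of well-definedness: a priori $\overline{\tr}$ is described in terms of a chosen formal-sum representative of $x$, but since it agrees with $q$, which is manifestly defined on $\overline{W}(A)$ itself, it does not depend on the representative.

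The only substantive input is then that $V^{\geq 2}\overline{W}(A)$ is an ideal. It is a subgroup, being the $\ch$-preimage of the ideal $(1+A[[t]])^\times_{\geq 2}$ of $(1+A[[t]])^\times$ (\cref{lem:truncated_iso}). For multiplicative absorption I would invoke the multiplication formula of \cref{thm:presentation}: for any generator $V^k(a)$ of $\overline{W}(A)$ and any $V^j(b)$ with $j \geq 2$ we have $V^k(a)\,V^j(b) = \gcd(k,j)\, V^{\operatorname{lcm}(k,j)}(a^{j/\gcd(k,j)} b^{k/\gcd(k,j)})$ with $\operatorname{lcm}(k,j) \geq j \geq 2$, so this product lies in $V^{\geq 2}\overline{W}(A)$; since every element of $\overline{W}(A)$ is a formal $\mathbb{Z}$-sum of such generators, $\overline{W}(A)\cdot V^{\geq 2}\overline{W}(A) \subseteq V^{\geq 2}\overline{W}(A)$. (Equivalently, one could just quote that the $V$-filtration is compatible with multiplication, $V^{\geq k_1}\cdot V^{\geq k_2} \subseteq V^{\geq \operatorname{lcm}(k_1,k_2)}$.)

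Putting these together, $\overline{W}(A)/V^{\geq 2}\overline{W}(A)$ inherits a ring structure and $q = \overline{\tr}$ is a ring homomorphism; in particular $\overline{\tr}(x+y) = \overline{\tr}(x) + \overline{\tr}(y)$ and $\overline{\tr}(xy) = \overline{\tr}(x)\overline{\tr}(y)$. If one prefers a hands-on check of multiplicativity it comes out the same way: writing $x,y$ with degree-one parts $\sum_k c_{1k}V^1(a_{1k})$, $\sum_l d_{1l}V^1(b_{1l})$, all cross products involving a term $V^j(\cdot)$ with $j\geq 2$ land in $V^{\geq 2}\overline{W}(A)$ by the computation above, and $V^1(a)V^1(b) = V^1(ab)$ by \cref{thm:presentation}, so $\overline{\tr}(xy) \equiv \sum_{k,l} c_{1k}d_{1l} V^1(a_{1k}b_{1l}) = \overline{\tr}(x)\overline{\tr}(y)$. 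I do not expect any genuine obstacle here: the entire content is the remark that higher Verschiebungs are already in $V^{\geq 2}\overline{W}(A)$, so $\overline{\tr}$ is literally reduction modulo the ideal $V^{\geq 2}\overline{W}(A)$, and the ideal property is precisely what the multiplicative formula of \cref{thm:presentation} encodes.
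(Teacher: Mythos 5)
Your proof is correct and takes essentially the same approach the paper intends: the paper states the lemma without an explicit proof, prefaced only by ``By the properties we have proved for the multiplicative structure,'' which is precisely the observation you make explicit --- that $\overline{\tr}$ is reduction modulo $V^{\geq 2}\overline{W}(A)$, and that $V^{\geq 2}\overline{W}(A)$ is an ideal because the multiplication rule $V^i(a)\,V^j(b)=\gcd(i,j)\,V^{\operatorname{lcm}(i,j)}(\cdots)$ pushes products into filtration degree $\operatorname{lcm}(i,j)\geq 2$. Your added remark on well-definedness (identifying $\overline{\tr}$ with the canonical quotient map rather than a representative-dependent formula) is a useful clarification that the paper leaves implicit.
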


The following lemma is also useful.

\begin{lem}
  In $V^{\geq 1} \overline{W}(A)/V^{\geq 2} \overline{W}(A)$, $V^1(a+b) = V^1(a)+V^1(b)$. Consequently, $V^1(na) = n V^1(a)$ for $n \in \mathbb{Z}$. 
\end{lem}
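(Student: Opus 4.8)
The plan is to transport the identity along the characteristic isomorphism $\operatorname{ch}\colon\overline{W}(A)\xrightarrow{\cong}(1+A[[t]])^\times$, under which it becomes an elementary computation in power series truncated at degree two. By construction the characteristic isomorphism matches the filtration $V^{\geq k}\overline{W}(A)$ with the subgroups $(1+A[[t]])^\times_{\geq k}$ (this is how the $V$-topology was set up on both sides, the $V$-filtration on $\widetilde{K}_0(\End(A))$ being the inverse image of $(1+A[[t]])^\times_{\geq k}$ under $\operatorname{ch}$); since $V^{\geq 1}\overline{W}(A)=\overline{W}(A)$, it therefore induces a ring isomorphism
\[
\operatorname{ch}\colon V^{\geq 1}\overline{W}(A)/V^{\geq 2}\overline{W}(A)\ \xrightarrow{\ \cong\ }\ (1+A[[t]])^\times/(1+A[[t]])^\times_{\geq 2},
\]
under which the group operation of $\overline{W}(A)$ (its ``addition'') corresponds to multiplication of power series and $\operatorname{ch}(V^1(c))=\det(\id-tc)=1-ct$ for every $c\in A$.

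First I would compute, in $(1+A[[t]])^\times$,
\[
\operatorname{ch}\big(V^1(a)+V^1(b)\big)=(1-at)(1-bt)=1-(a+b)t+ab\,t^2 .
\]
Dividing this by $\operatorname{ch}(V^1(a+b))=1-(a+b)t$ yields a power series of the form $1+(\text{terms of degree}\ge 2)$, i.e.\ an element of $(1+A[[t]])^\times_{\geq 2}$; hence $\operatorname{ch}(V^1(a)+V^1(b))$ and $\operatorname{ch}(V^1(a+b))$ have the same image in $(1+A[[t]])^\times/(1+A[[t]])^\times_{\geq 2}$. Since the displayed map is an isomorphism, this gives $V^1(a+b)=V^1(a)+V^1(b)$ in $V^{\geq 1}\overline{W}(A)/V^{\geq 2}\overline{W}(A)$.

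For the consequence I would argue purely formally inside the abelian group $V^{\geq 1}\overline{W}(A)/V^{\geq 2}\overline{W}(A)$: additivity of $a\mapsto V^1(a)$ forces $V^1(0)=0$ (also visible directly since $\operatorname{ch}(V^1(0))=1$), then $V^1(na)=nV^1(a)$ for $n\ge 0$ by induction on $n$, and finally $V^1(-na)=-V^1(na)$ from $V^1(na)+V^1(-na)=V^1(0)=0$; together these give $V^1(na)=nV^1(a)$ for all $n\in\mathbb{Z}$.

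I do not expect a genuine obstacle; the only point needing care is the bookkeeping that $V^{\geq 1}\overline{W}(A)/V^{\geq 2}\overline{W}(A)$ really is $(1+A[[t]])^\times/(1+A[[t]])^\times_{\geq 2}\cong W_1(A)=A$ (watching the off-by-one in the truncation indices). In fact one can bypass the power-series computation altogether: the main theorem of \cref{sec:main_results} already asserts that the generalized Teichm\"uller map is a ring \emph{isomorphism} $W_1(A)=A\xrightarrow{\cong}V^{\geq 1}\overline{W}(A)/V^{\geq 2}\overline{W}(A)$, $a\mapsto V^1(a)$, and a ring homomorphism is in particular additive --- which is exactly the first assertion, the second then following as above.
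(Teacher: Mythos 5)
Your proof is correct and takes essentially the same route as the paper, which simply says the lemma ``is an easy check with characteristic polynomials''; you have spelled out that check, namely that $(1-at)(1-bt)$ and $1-(a+b)t$ differ by an element of $(1+A[[t]])^\times_{\geq 2}$. The closing observation that the isomorphism $W_1(A)\cong V^{\geq 1}\overline{W}(A)/V^{\geq 2}\overline{W}(A)$ from the main theorem already gives the result is a nice shortcut, but the mechanism is the same.
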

\begin{proof}
This is an easy check with characteristic polynomials. 
\end{proof}

For ease, we abbreviate the target of the modified ghost map to $V^{\geq 1}/V^{\geq 2}$, with the ring understood. Since we now have a trace, and we already had a Frobenius map, we can define a version of the ghost map. 

\begin{defn}
  Define the \textbf{modified ghost map} on 
  \[
  \mbf{x} \mapsto (\overline{\tr} \circ F^n) (\mbf{x}) \qquad \overline{W}(A) \to \prod V^{\geq 1} / V^{\geq 2}
  \]
  The term $\prod V^{\geq 1}/V^{\geq 2}$ is given a ring structure with pointwise multiplication, and is called the \textbf{modified ghost ring}. 
\end{defn}

Given the modified ghost coordinates, and computing only in the modified ghost ring, we can use Newton's relations formally to compute the appropriate Witt coordinates. 

\begin{example}
  We consider $(0,a)\ast(0,b) = 2V^2(ab)$. We compute the modified ghost coordinates
  \[
  0 \qquad 4V^1(ab) \qquad 0 \qquad 4 V^1(a^2b^2) \qquad 0 \qquad 4 V^1(a^3 b^3) \qquad 0 \qquad 4 V^1(a^4b^4)
  \]
  First, $-\sigma_1 = p_1 = 0$. 
  
  The second term is computed as follows: $-2 \sigma_2 = -\sigma_1 p_1 - p_2$, and $p_2$ is  $2V^1(ab) = V^1(2ab)$ (in $V^{\geq 1}/V^{\geq 2}$!)

  Continuing, we compute 
  \begin{align*}
    -4 \sigma_4 &= -\sigma_3 p_1 + \sigma_2 p_2 + -\sigma_1 p_3 - p_4\\
    &= 0 + 2V^1(ab) 4V^1(ab) - 4V^1(ab)V^1(ab)\\
    &= 4V^1(a^2 b^2)
  \end{align*}
  so $\sigma_4 = V^1(-(ab)^2)$, etc.

  The Teichm\"{u}ller normal form is then given by
  \[
  V^1(\overline{\sigma_1}) + V^2(\overline{\sigma_2}) + V^3(\overline{\sigma_3}) + V^4(\overline{\sigma_4}) + \cdots
  \]
   where $\overline{\sigma}_i$ denotes the lift of $\sigma_i \in V^{\geq 1}/V^{\geq 2}$ to $\overline{W}(A)$. In our current example, the sum becomes
  \[
  V^2 (V^1(2ab)) + V^4 (V^1(-(ab)^2)) + \cdots = V^2(2ab) + V^4(-(ab)^2) + \cdots 
  \]
\end{example}

So, by use of the correct definitions, we have avoided ever having to mention ghost coordinates in the definition of the ring of Witt vectors, or to prove any of their basic properties and identities. Indeed, that would be akin to diagonalizing a matrix, or putting it into a normal form, to prove all matrix identities. However, in some cases a normal form is convenient, for instance, in testing for equality. To this end, we have given a procedure for translating into Witt coordinates, which, again, should be thought of as recording traces. By the defining the trace correctly, we have also avoided issues with torsion --- the ring elements never mix with $\mathbb{Z}$ at inopportune moments.

Before closing this section, we note that the universal Witt polynomials can be computed quite quickly using a combination of the presentation of $\overline{W}(A)$ and the procedure above.

\section{Higher Structures}\label{sec:conjectures}

In this section, we make some conjectures and observations that we will return to in future work. Since nothing is proved in this section, we will content ourselves with sketches. The reader who, for mysterious reasons, is not interested in higher algebraic $K$-theory or topological Hochschild homology may comfortably skip this section. 

The group $\widetilde{K}_0 (\End(A))$ may be realized as the group of components of an algebraic $K$-theory spectrum. To see this, we note that the category of endomorphisms is appropriate for input into one of the algebraic $K$-theory machines: the category $\End(A)$ is an exact or Waldhausen category in the sense of \cite{quillen} or \cite{waldhausen}. What is more surprising is that this is true of the $V$-filtration as well.

\begin{prop}
Let $V^{\geq k} \End(A)$ be the thick subcategory generated by images of $V^i$ with $i \geq k$. Then $V^{\geq k} \End(A)$ is a Waldhausen category. 
\end{prop}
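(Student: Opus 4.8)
The plan is to verify the Waldhausen axioms for $V^{\geq k}\End(A)$ by leveraging the fact, established earlier in the excerpt, that $\End(A)$ is itself an exact category and that $V^{\geq k}\End(A)$ is a full subcategory closed under extensions (this was used to show it is exact, hence Waldhausen in the exact-category sense). The real content of the present proposition, as phrased, is that the \emph{thick} subcategory generated by the images of $V^i$ with $i \geq k$ coincides with the previously-introduced $V^{\geq k}\End(A)$ — the full subcategory of endomorphisms whose characteristic polynomial lies in $(1+A[t])^\times_{\geq k}$ — so that the Waldhausen structure is inherited. So the first step is to identify these two descriptions.

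First I would show that every object in the image of $V^i$ (for $i \geq k$) has characteristic polynomial in $(1+A[t])^\times_{\geq k}$: indeed $\mathrm{ch}(V^i(P,f)) = \det(\mathrm{id} - t\, V^i(f))$, and the cyclic-trick determinant computation (already used repeatedly, e.g. in the proof of \cref{prop:necessary_relations}) gives $\det(\mathrm{id}-t^i f)$ up to the evident identification, which is a polynomial with no terms below degree $i \geq k$. Hence the image of each such $V^i$ lies in $V^{\geq k}\End(A)$, and since the latter is closed under extensions and retracts (being defined by a condition on the characteristic polynomial, which is additive on short exact sequences and behaves well under splittings), the thick subcategory it generates is contained in $V^{\geq k}\End(A)$. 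For the reverse inclusion I would invoke the companion-matrix description of the inverse of the characteristic polynomial isomorphism given just before \cref{rational_filtered_equivalence}: an arbitrary object of $V^{\geq k}\End(A)$ has characteristic polynomial, a ratio $p(t)/q(t)$ of polynomials starting in degree $\geq k$, and is hence a virtual difference of companion-matrix endomorphisms, each of which is built up from the $V^i(a_i)$, $i \geq k$, by the multiplication formula of \cref{thm:presentation} — placing it in the thick subcategory generated by the $V^i$. This identifies the two subcategories.

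Having done that, the Waldhausen structure is immediate: take cofibrations to be admissible monomorphisms and weak equivalences to be isomorphisms, exactly as for the ambient exact category $\End(A)$; the axioms (existence of a zero object — note $0 \in V^{\geq k}\End(A)$ since $\mathrm{ch}(0)=1$ has all higher coefficients zero vacuously, closure of cofibrations under composition and cobase change along cofibrations, and the gluing axiom) all follow because $V^{\geq k}\End(A)$ is a full subcategory of $\End(A)$ closed under the relevant colimits, which in turn holds because the characteristic-polynomial condition $\mathrm{ch}(-)\in(1+A[t])^\times_{\geq k}$ is preserved under extensions (additivity of $\mathrm{ch}$ on short exact sequences — this is exactly the statement, recalled near \cref{big_diagram}, that $\mathrm{ch}$ sends $\oplus$ to multiplication and exact sequences to products) and under the formation of pushouts of cofibrations inside an exact category.

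The main obstacle I expect is the reverse inclusion — showing every object of $V^{\geq k}\End(A)$ lies in the thick subcategory generated by the $V^i$, $i\geq k$. One has to be a little careful that the companion-matrix representatives coming from numerator and denominator are genuinely built (up to the thick-subcategory closure, i.e. allowing retracts and extensions) out of the generators $V^i$, rather than just having the right characteristic polynomial; here one uses that the multiplication of \cref{thm:presentation} and the Verschiebung formula express $V^{\operatorname{lcm}(i,j)}(\cdots)$ in terms of $V^i, V^j$, and that a companion matrix with characteristic polynomial $\prod(1-a_\ell t^{m_\ell})$ is, by \cref{thm:workhorse}, equal in $\widetilde{K}_0(\End(A))$ to $\bigoplus V^{m_\ell}(a_\ell)$ — so it differs from an honest sum of $V^{m_\ell}$'s by a stable equivalence, which is absorbed into the thick closure. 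Once the identification of the two subcategories is in hand, no further work is needed beyond citing the already-proved fact that $V^{\geq k}\End(A)$ is exact.
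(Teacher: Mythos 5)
The paper never proves this proposition: it sits in the final section, which the author explicitly prefaces with ``Since nothing is proved in this section, we will content ourselves with sketches.'' What the paper does prove, earlier, is the lemma that the char-poly version of $V^{\geq k}\End(A)$ is exact, with the one-line justification that it is a full subcategory of an exact category closed under extension. The intended proof of the present proposition is surely the same one line applied to the thick subcategory: ``thick'' already entails closed under extensions, so the subcategory inherits the exact structure from $\End(A)$ and is therefore Waldhausen with cofibrations the admissible monos and weak equivalences the isomorphisms. No identification with the char-poly definition is needed.

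Your proposal instead takes it as ``the real content'' that the thick subcategory generated by the $V^i$, $i\geq k$, coincides with the full subcategory $\{(P,f): \mathrm{ch}(P,f)\in(1+A[t])^\times_{\geq k}\}$, and attempts to prove this. That is not required by the statement, and more importantly the argument as given has a genuine error in the first inclusion. You claim the char-poly subcategory is ``closed under extensions and retracts,'' and use that to conclude the thick subcategory it contains is still inside it. But closure under retracts fails: take $A=\mathbb{Z}$ and the endomorphism $f=\mathrm{diag}(1,-1)$ on $A^2$. Then $\mathrm{ch}(f)=(1-t)(1+t)=1-t^2\in(1+A[t])^\times_{\geq 2}$, yet the retract $(A,1)$ has $\mathrm{ch}=1-t\notin(1+A[t])^\times_{\geq 2}$. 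So the degree-$\geq k$ condition on the characteristic polynomial is multiplicative but emphatically not ``retract-closed''; the characteristic polynomial of a summand is a divisor, not a subseries, of that of the whole. (In this particular example the object itself happens to be isomorphic to $V^2(1)$ and thus does lie in the thick subcategory, but that is not what your argument asserted.)

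The reverse inclusion also does not follow from the companion-matrix description as you invoke it. You argue that an object of the char-poly subcategory ``is, by \cref{thm:workhorse}, equal in $\widetilde{K}_0(\End(A))$ to $\bigoplus V^{m_\ell}(a_\ell)$ \textemdash{} so it differs \dots\ by a stable equivalence, which is absorbed into the thick closure.'' Equality of $K_0$-classes only gives $X\oplus Z\cong Y\oplus Z$ for some auxiliary $Z$, and that $Z$ need not live in the thick subcategory; $K_0$-equality is not absorbed by thick closure. You correctly flagged this as the ``main obstacle,'' but the resolution offered does not work. The good news is that none of this is needed: simply drop the identification step, observe that a thick (in particular, extension-closed) full subcategory of the exact category $\End(A)$ is itself exact, and conclude. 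That is the whole proof.
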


Thus, $V^{\geq k} \End(A)$ may be fed into an algebraic $K$-theory machine. This entitles us to the following surprising definition. 

\begin{defn}
  The \textbf{$V$-filtration on the algebraic $K$-theory of endomorphisms} is given by 
  \[
  V^{\geq k} K(\End(A))  := K(V^{\geq k} \End(A)) := \Omega |S_\bullet V^{\geq k} \End(A)|
  \]
\end{defn}

Taking algebraic $K$-theory of the filtered category, we obtain maps of spectra
\[
\begin{tikzcd}
\cdots \ar{r} & V^{\geq 2} K(\End(A)) \ar{r} & V^{\geq 1} K(\End(A)) \ar{r}{\cong} & K(\End(A))
\end{tikzcd}
\]
which upon taking homotopy cofibers gives a pro-spectrum
\[
\begin{tikzcd}
  \ast & K(\End(A))/V^2 \ar{l} & K(\End(A))/V^3\ar{l} & K(\End(A))/V^4\ar{l} & \cdots \ar{l}.
\end{tikzcd}
\]

In analogy with the case of $K_0$, we make the following definition. 

\begin{defn}
  The \textbf{$V$-completion} of $\widetilde{K} (\End(A))$ is
  \[
  \holim \widetilde{K}(\End(A))/V^j 
  \]
\end{defn}

Based on the work above, it seems reasonable to make the following conjecture
\begin{conj}
  There is a weak equivalence of spectra
  \[
  \begin{tikzcd}
    \holim \widetilde{K}(\End(A))/V^j \ar{r} & \TR(A) 
  \end{tikzcd}
  \]
  where $\TR(A)$ is topological restriction homology. 
\end{conj}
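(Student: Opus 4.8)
The plan is to lift the $\pi_0$ argument of \cref{sec:main_results} to spectra by matching the tower $\{\widetilde{K}(\End(A))/V^j\}_j$ with the standard tower computing $\TR(A)$, rather than by analyzing the homotopy limit directly. Recall that $\TR(A) \simeq \operatorname{holim}_n \TR^{n}(A)$ along the restriction maps $R$, where $\TR^{1}(A) \simeq \THH(A)$ and the fibre of $R\colon \TR^{n}(A) \to \TR^{n-1}(A)$ is a homotopy-orbit spectrum of the form $\THH(A)_{hC_n}$ (the $p$-typical analogue having $\THH(A)_{hC_{p^n}}$). On $\pi_0$ this tower is exactly the $V$-tower on $W(A)$ of \cref{sec:main_results}, so the conjecture is its spectrum-level refinement.

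First I would construct the comparison map $\phi\colon \widetilde{K}(\End(A)) \to \TR(A)$. Morally this is the cyclotomic trace refined to endomorphisms: an object $(P,f)$ and its iterates $f^{i}$ supply precisely the fixed-point data detected by $\TR$, and the categorical functors $F^{i}$, $V^{i}$ of \cref{sec:main_section} are the shadows of the operators $F$, $V$ on $\TR$. A map of this kind is essentially available from the work of Betley--Schlichtkrull and Hesselholt on curves on $K$-theory; on $\pi_0$ it is the inclusion of $\widetilde{K}_0(\End(A))$, the rational Witt vectors, into $W(A) \cong \TR_0(A)$. Equivalently, one may use that an endomorphism is the same datum as an $A[x]$-module finitely generated projective over $A$, pass to the resulting towers of $K$-theories of $A[x]/x^{n}$-modules, and invoke the theorem identifying $\operatorname{holim}_n \widetilde{K}(A[x]/x^{n})$ with $\TR(A)$.

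Second, I would show $\phi$ is filtered, carrying $V^{\geq n+1}\widetilde{K}(\End(A))$ into the fibre of $\TR(A) \to \TR^{n}(A)$ and hence inducing compatible maps $\phi_n\colon \widetilde{K}(\End(A))/V^{n+1} \to \TR^{n}(A)$ over the restriction and quotient maps. Using the localization cofibre sequences $K(V^{\geq n+1}\End(A)) \to K(V^{\geq n}\End(A)) \to K\bigl(V^{\geq n}\End(A)/V^{\geq n+1}\End(A)\bigr)$ on the source and the fundamental cofibre sequences on the target, the problem reduces to showing that the induced map on $n$-th layers
\[
\operatorname{gr}^{n}\widetilde{K}(\End(A)) \;=\; K\bigl(V^{\geq n}\End(A)/V^{\geq n+1}\End(A)\bigr) \longrightarrow \THH(A)_{hC_n}
\]
is an equivalence for every $n$ --- the case $n=1$ being the statement $\widetilde{K}(\End(A))/V^{2} \simeq \THH(A)$, which is the first conjecture of the introduction. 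Granting this, induction on $n$ with the five lemma makes each $\phi_n$ an equivalence, and passing to homotopy limits identifies the source with $\TR(A) = \operatorname{holim}_n \TR^{n}(A)$; connectivity of the $\widetilde{K}$-spectra together with surjectivity of $R$ on $\pi_0$ (as in \cref{sec:main_results}) controls the Milnor $\varprojlim^{1}$-terms.

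The main obstacle is the computation of these associated graded layers, i.e.\ identifying the $K$-theory of the quotient category $V^{\geq n}\End(A)/V^{\geq n+1}\End(A)$ with $\THH(A)_{hC_n}$. The ``degree exactly $n$'' endomorphisms surviving in this quotient are built from the cyclic matrices $V^{n}(a)$, and it is the $C_n$-symmetry of those matrices --- already visible on $\pi_0$ in the $\gcd$/$\operatorname{lcm}$ formula of \cref{thm:presentation} --- that has to be matched with the $C_n$-homotopy-orbit structure on $\THH(A)$. Making this precise calls for a usable model (or a d\'evissage) for the quotient exact category and a comparison of its $K$-theory with the cyclic bar construction; this should be adapted from Hesselholt's and Betley--Schlichtkrull's computations of the $K$-theory of nilpotent-endomorphism categories in terms of $\TR$, the genuinely new ingredient being the bookkeeping along the $V$-filtration. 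A secondary technical point is arranging that the localization sequences above actually hold, i.e.\ that the quotients of exact (or stable $\infty$-) categories in play are well enough behaved for the localization theorem to apply; passing to perfect complexes and invoking the localization theorem for stable $\infty$-categories is one way to sidestep this.
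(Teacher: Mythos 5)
The statement you are attempting to prove is presented in the paper only as a \emph{conjecture}, in \cref{sec:conjectures}, a section that opens by saying ``nothing is proved in this section.'' There is therefore no proof of the author's to compare your sketch against, and your proposal has to be judged on its own terms. Taking it that way, the overall shape --- construct a filtered map $\widetilde{K}(\End(A)) \to \TR(A)$ via the cyclotomic trace or curves on $K$-theory, reduce to associated graded layers, and identify $\operatorname{gr}^n\widetilde{K}(\End(A))$ with $\THH(A)_{hC_n}$ --- is a sensible and natural plan of attack, and it is consistent with the supporting remarks the author does make (that $\widetilde{K}(\End(A))$ is cyclotomic, and that $\widetilde{K}(\End(A))/V^2$ should recover $\THH(A)$).

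Two cautions are worth recording. First, the paper's object $\widetilde{K}(\End(A))/V^j$ is \emph{defined} as the homotopy cofiber of the map of spectra $K(V^{\geq j}\End(A)) \to K(\End(A))$, not as the $K$-theory of a quotient category; your reduction to the $n$-th layer silently replaces that cofiber with $K\bigl(V^{\geq n}\End(A)/V^{\geq n+1}\End(A)\bigr)$, and a Waldhausen-style fibration or Verdier-localization sequence producing that identification is a theorem with hypotheses, not a formality. You flag this at the end, and I agree it is where much of the real work would be. Second, be careful with big versus $p$-typical conventions on the target: for big $\TR$ there is no linear tower with a restriction map $R\colon \TR^{n}(A) \to \TR^{n-1}(A)$ whose fibre is $\THH(A)_{hC_n}$ --- the restriction maps run over the divisibility poset. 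The tower you actually want is the one whose $\pi_0$-truncations reproduce $W_n(A) = W(A)/V^{\geq n+1}W(A)$, assembled from the $\THH(A)^{C_d}$ with $d \leq n$, rather than the $p$-typical fundamental cofibre sequence imported verbatim. With those adjustments your sketch is a reasonable blueprint, but the crucial associated-graded identification $\operatorname{gr}^n\widetilde{K}(\End(A)) \simeq \THH(A)_{hC_n}$ --- and in particular its $n=1$ case, which is itself one of the paper's open conjectures --- remains the unproved heart of the matter, for which the paper offers only heuristic evidence.
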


That is, we conjecture that  $\TR(A)$ can be constructed directly from $\widetilde{K}(\End(A))$ without reference to topological Hochschild homology. One can also show that $\widetilde{K}(\End(A))$ is an $S^1$-ring spectrum, and so the homotopy groups acquire a differential. It should be that the homotopy groups of $V$-completion of $\widetilde{K}(\End(A))$ are very closely related to the big de Rham-Witt complex \cite{hesselholt_big_de_rham}.

A related, easier, conjecture is the following. 

\begin{conj}
There is a weak equivalence of spectra
\[
\begin{tikzcd}
\holim \widetilde{K} (\Set^{\text{fin}}_{\cong})/V^j \ar{r} & \TR(S)
\end{tikzcd}
\]
where $S$ denotes the sphere spectrum. This lifts the isomorphism $\varprojlim K_0 (\operatorname{Set}^{\text{fin}})/V^j \to W(\mathbb{Z})$. 
\end{conj}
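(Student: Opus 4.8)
The plan is to compute both sides by hand: the source via Barratt--Priddy--Quillen, the target via the tom Dieck splitting, and then to match them by a trace map.

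\emph{The source.} The groupoid $\Set^{\text{fin}}_{\cong}$ is the free symmetric monoidal groupoid on the transitive objects, i.e.\ on the $n$-cycles $C_n$ ($n \geq 1$), with automorphism group $\operatorname{Aut}(C_n)\cong C_n$ (the centraliser of an $n$-cycle in $\Sigma_n$). Barratt--Priddy--Quillen--Segal, applied to each generator and assembled over the resulting free product, gives an additive equivalence
\[
\widetilde{K}(\Set^{\text{fin}}_{\cong}) \;\simeq\; \bigvee_{n\geq 1}\Sigma^\infty_+ BC_n
\]
(with reduction/indexing conventions fixed so that on $\pi_0$ one recovers the big Witt vectors of $\mathbb{Z}$ as in \cref{sec:witt}). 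Since $V^i(C_j)\cong C_{ij}$, the subcategory $V^{\geq k}\Set^{\text{fin}}$ is the free symmetric monoidal groupoid on $\{C_n : n\geq k\}$, so $K(V^{\geq k}\Set^{\text{fin}})\simeq \bigvee_{n\geq k}\Sigma^\infty_+ BC_n$ with the inclusion realising the inclusion of those wedge summands. As this inclusion is split, the cofiber sequence defining $\widetilde{K}(\Set^{\text{fin}}_{\cong})/V^k$ gives
\[
\widetilde{K}(\Set^{\text{fin}}_{\cong})/V^k \;\simeq\; \bigvee_{1\leq n<k}\Sigma^\infty_+ BC_n,
\]
with tower maps the evident projections collapsing the top summand; there is no $\varprojlim^1$ term, so $\holim_k \widetilde{K}(\Set^{\text{fin}}_{\cong})/V^k \simeq \prod_{n\geq 1}\Sigma^\infty_+ BC_n$.

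\emph{The target.} Since $\THH(S)\simeq S$ as a cyclotomic spectrum, $\TR(S)$ is built from the genuine fixed points $S^{C_m}$ ($C_m\subset S^1$) along restriction maps, and the truncation of $\TR(S)$ at the truncation set $\{1,\dots,k-1\}$ is $\bigvee_{1\leq n<k}\Sigma^\infty_+ BC_n$ by the tom Dieck splitting of $S^{C_m}$, with restriction maps again the projections onto sub-wedges (this is Hesselholt--Madsen's computation; on $\pi_0$ it refines $\varprojlim_j K_0(\Set^{\text{fin}})/V^j \cong W(\mathbb{Z})$ to $\pi_0\TR(S)=W(\mathbb{Z})$). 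Hence $\TR(S)\simeq \prod_{n\geq 1}\Sigma^\infty_+ BC_n$, and its truncation tower agrees level-by-level, and on transition maps, with the tower above.

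\emph{The comparison, and the main obstacle.} What remains is to produce an actual map realising this agreement: a (cyclotomic/Dennis) trace $\widetilde{K}(\Set^{\text{fin}}_{\cong})\to \TR(S)$ carrying the $V$-filtration into the restriction filtration on $\TR(S)$. Granting such compatibility one obtains an induced map of towers, hence $\holim_k\widetilde{K}(\Set^{\text{fin}}_{\cong})/V^k \to \TR(S)$, which by the two computations is a self-map of $\prod_n\Sigma^\infty_+ BC_n$ restricting to the identity on each wedge summand, hence an equivalence. The crux is precisely this filtration-compatibility: one must identify the categorically-defined Verschiebung filtration $V^{\geq k}$ on $\widetilde{K}(\Set^{\text{fin}}_{\cong})$ with the restriction filtration on $\TR(S)$ \emph{at the spectrum level}, not merely on $\pi_0$. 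A clean route is likely to factor the trace through an equivalence $\widetilde{K}(\Set^{\text{fin}}_{\cong})\simeq \widetilde{K}(\End(S))$ and then invoke the general conjecture of this section for $A=S$, or to use Blumberg--Mandell / Lindenstrauss--McCarthy-type results relating $\THH$ and $\TR$ to $K$-theory of endomorphisms; I expect this identification of filtrations to be the main difficulty.
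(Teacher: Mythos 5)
This statement appears in \cref{sec:conjectures}, where the author explicitly disclaims proofs (``Since nothing is proved in this section, we will content ourselves with sketches''). The paper gives no argument for this conjecture beyond stating it, so there is no proof to compare against; you are supplying a sketch where the paper supplies none.

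On its own terms your outline is sound and identifies the two computations that anyone attacking this conjecture would have to do. The Barratt--Priddy--Quillen--Segal identification $\widetilde{K}(\Set^{\text{fin}}_{\cong})\simeq\bigvee_{n\geq 1}\Sigma^\infty_+ BC_n$ and the compatibility of the $V$-filtration with the wedge decomposition (via $V^i(C_j)\cong C_{ij}$) are both correct. The tom Dieck splitting $\mathbb{S}^{C_m}\simeq\bigvee_{d\mid m}\Sigma^\infty_+ BC_d$, together with $\THH(\mathbb{S})\simeq\mathbb{S}$ as a cyclotomic spectrum, does give $\TR(\mathbb{S})\simeq\prod_{n\geq 1}\Sigma^\infty_+ BC_n$, and you correctly note on $\pi_0$ this recovers $W(\mathbb{Z})$. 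You are also right that the genuine content is producing an actual map of filtered spectra --- some form of cyclotomic trace out of $\widetilde{K}(\Set^{\text{fin}}_{\cong})$ --- that realises this agreement summand by summand and is compatible with the tower maps on both sides; matching abstract wedge decompositions is not a proof. That is indeed where the conjecture is open, and the route you suggest (factor through $\widetilde{K}(\End(\mathbb{S}))$ and invoke the $A=\mathbb{S}$ case of the neighbouring conjecture) is the sensible one.

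One point you pass over quickly: the tilde in $\widetilde{K}(\Set^{\text{fin}}_{\cong})$. Elsewhere in the paper $\widetilde{K}_0(\End(A))$ denotes the kernel of the split surjection to $K_0(A)$, whose analogue here would be the kernel of the forgetful map to $K(\Set_{\cong})\simeq\mathbb{S}$; that would remove the $C_1$ summand, leaving $\bigvee_{n\geq 2}\Sigma^\infty_+ BC_n$ with $\pi_0$ equal to $\prod_{n\geq 2}\mathbb{Z}$, which does \emph{not} give $W(\mathbb{Z})$. The $\pi_0$ statement the conjecture is supposed to lift uses $K_0(\Set^{\text{fin}})$ without a tilde, so either the tilde in the conjecture is a slip in the paper or the reduction is being taken in a different sense. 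Your phrase ``with reduction/indexing conventions fixed so that on $\pi_0$ one recovers $W(\mathbb{Z})$'' acknowledges this without resolving it; in a serious write-up you should pin down exactly which reduction is meant and verify the $n=1$ summand survives on both sides.
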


More speculatively, one could posit that $K(\End(A))/V$ is a more canonical version of $\THH$, or perhaps equivalent to $\THH$. In fact, $K(\End(A))$ is a cyclotomic spectrum \footnote{We show this in forthcoming work with Lind, Malkiewich, Ponto and Zakharevich}, and we conjecture the following. 

\begin{conj}
There is a weak equivalence of $S^1$-spectra
\[
\begin{tikzcd}
\widetilde{K}(\End(A))/V^2 = V^{\geq 1} \widetilde{K}(\End(A)) / V^{\geq 2} \widetilde{K}(\End(A))  \ar{r} & \THH(A)
\end{tikzcd}
\]
and the topological Dennis trace is given by the truncation map
\[
\begin{tikzcd}
  \widetilde{K}(\End(A)) \ar{r} &  V^{\geq 1} \widetilde{K}(\End(A))/ V^{\geq 2} \widetilde{K}(\End(A))
\end{tikzcd}
\]
\end{conj}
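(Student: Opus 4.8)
The plan is to establish the equivalence by realizing the first graded piece $\widetilde{K}(\End(A))/V^2 = V^{\geq 1}\widetilde{K}(\End(A))/V^{\geq 2}\widetilde{K}(\End(A))$ as the algebraic $K$-theory of a quotient Waldhausen category, identifying that $K$-theory with $\THH(A)$ by a d\'evissage argument, and carrying the circle action through every step. Granting the cyclotomic structure on $\widetilde{K}(\End(A))$ --- to be constructed in the forthcoming work with Lind, Malkiewich, Ponto and Zakharevich --- the formal input that makes the filtration $S^1$-equivariant is that the cyclotomic Frobenii interact compatibly with the Verschiebung subcategories: $V^i$ multiplies filtration degree by $i$, which is exactly the bookkeeping of an $i$th root. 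Consequently the $S^1$-action restricts to each $V^{\geq k}\widetilde{K}(\End(A))$ and descends to the associated graded spectra, so that $\widetilde{K}(\End(A))/V^2$ is naturally an $S^1$-spectrum and the truncation $\widetilde{K}(\End(A))\to\widetilde{K}(\End(A))/V^2$ is $S^1$-equivariant.

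Next I would identify $\widetilde{K}(\End(A))/V^2$ with the $K$-theory of a quotient category. The excerpt records that $V^{\geq 2}\End(A)$, the thick subcategory of $\End(A)$ generated by the images of the $V^i$ with $i\ge 2$, is itself a Waldhausen category; after verifying the closure conditions needed to form the Waldhausen quotient $\mathcal{Q} := V^{\geq 1}\End(A)/V^{\geq 2}\End(A)$ (here $V^{\geq 1}\End(A)=\End(A)$), Waldhausen's fibration theorem yields a cofiber sequence $K(V^{\geq 2}\End(A))\to K(\End(A))\to K(\mathcal{Q})$ and hence $\widetilde{K}(\End(A))/V^2\simeq\widetilde{K}(\mathcal{Q})$. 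On $\pi_0$ this reproduces the isomorphism $\widetilde{K}_0(\End(A))/V^2\cong W_1(A)=A$ already proved: in $\mathcal{Q}$ every $(P,f)$ becomes equivalent to $(A,\operatorname{tr} f)$, so $\mathcal{Q}$ is the category of endomorphisms remembering only their (derived) trace.

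The heart of the matter is then a comparison map $\widetilde{K}(\End(A))\to\THH(A)$, namely the topological Dennis trace of the endomorphism category followed by the evaluation $\THH(\End(A))\to\THH(A)$ that sends a loop on an object to that loop; equivalently, the map sending $(P,f)$ to the $1$-cycle $P\xrightarrow{f}P$ in the cyclic bar construction of $\operatorname{Proj}(A)$ and then applying Morita invariance $\THH(\operatorname{Proj}(A))\simeq\THH(A)$. This map is $S^1$-equivariant by construction and restricts to zero on $V^{\geq 2}\End(A)$ (the matrices $V^i(f)$ for $i\ge 2$ correspond to degenerate strings in the cyclic bar construction and have vanishing trace), so it factors as $\widetilde{K}(\End(A))\to\widetilde{K}(\End(A))/V^2\simeq\widetilde{K}(\mathcal{Q})\to\THH(A)$. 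To see the factored map is an equivalence one reduces by additivity and d\'evissage to free modules and then to $A$ itself, where the statement becomes the claim that the order-$\le 1$ part of the $K$-theory of square-zero endomorphisms is $\THH(A)$ --- in essence the relative form of the Hesselholt--Madsen computation of the $K$-theory of dual numbers, i.e.\ the identification of the first Goodwillie derivative of $\widetilde{K}(\End(-))$ along the $V$-filtration (in the spirit of Dundas--McCarthy and Lindenstrauss--McCarthy on the Taylor tower of relative $K$-theory). The second assertion of the conjecture, that the topological Dennis trace is the truncation map, is then just the unwinding of this identification.

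The main obstacle is the equivariance of all of this. Making Waldhausen's fibration theorem, the cyclic bar comparison, and the d\'evissage reduction compatible with the $S^1$-action, and --- crucially --- checking that the circle action produced on $\widetilde{K}(\End(A))/V^2$ by the Verschiebung/cyclotomic framework agrees with the standard circle action on $\THH(A)$, is where the real work lies; the underlying non-equivariant equivalence should be within reach of existing localization-plus-d\'evissage technology, but the equivariant (and, for the sharper cyclotomic statements, Frobenius-compatible) upgrade is exactly what requires the machinery of the forthcoming joint work, which is why it is stated here only as a conjecture.
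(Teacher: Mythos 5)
The statement you are addressing is an explicit \emph{conjecture} in the paper's final section, which opens with the warning ``Since nothing is proved in this section, we will content ourselves with sketches.'' The paper offers no argument at all: it defines $V^{\geq k}\widetilde{K}(\End(A))$ as $K$-theory of the Waldhausen category $V^{\geq k}\End(A)$, forms the tower of cofibers, cites the forthcoming construction of the cyclotomic structure on $\widetilde{K}(\End(A))$ (with Lind, Malkiewich, Ponto and Zakharevich), and simply asserts the conjecture, with a one-line remark that it is a ``derived'' version of the trace map defined on $K_0$. So there is no ``paper's own proof'' to compare against; your proposal is necessarily more than the paper provides, and you were right to present it as a plausibility sketch rather than a proof.

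As a sketch it is reasonable, and you correctly identify the real obstacles (equivariance/cyclotomic compatibility, quotient Waldhausen category, d\'evissage). Two points deserve flagging. First, your identification of $\widetilde{K}(\End(A))/V^2$ with $K$ of a Waldhausen quotient $\mathcal{Q}$ via the fibration theorem presupposes that the inclusion $V^{\geq 2}\End(A)\hookrightarrow\End(A)$ satisfies the hypotheses (cofinality, calculus of fractions, etc.); the paper only records that $V^{\geq 2}\End(A)$ is exact/Waldhausen, and the quotient category does not obviously exist as an exact category, so this step is genuinely unproved. Second, the heuristic that the Dennis trace ``vanishes on $V^{\geq 2}$ because the $V^i(f)$ have zero trace'' is a $\pi_0$ statement; at the spectrum level one needs the composite $K(V^{\geq 2}\End(A))\to K(\End(A))\to\THH(A)$ to be null, which is a substantial claim and not just a diagonal-entry computation. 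Finally, invoking Dundas--McCarthy/Lindenstrauss--McCarthy to identify the first layer with $\THH(A)$ conflates the Goodwillie tower of $K$-theory as a functor of rings with the $V$-filtration of $K(\End(A))$; these are expected to be related but that relation is itself part of what would have to be proved, not something one can cite. None of this means your outline is wrong-headed --- it is probably close to how the author intends to prove the conjecture --- but it should not be mistaken for a proof, and the paper itself makes no such attempt.
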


\begin{rmk}
This is a ``derived'' version of the trace map defined for $K_0$ above. 
\end{rmk}

Exactly what $\THH$ ``is'' has always been a been of a mystery to the author. Among other reasons, it could be that $\THH$ is simply the first truncation of a much more natural object --- $\widetilde{K}(\End(A))$. We also point out that this is expressing something like the fact that $\THH(A)$ is the ``tangent space'' of $\widetilde{K}(\End(A))$. Such intuition has been in the air for a very long time, and it would be pleasant to make it precise via this perspective.

Finally, for general Waldhausen categories $\mathcal{C}$ there is no good definition of $\thh$: the input for $\thh$ is either a spectrally enriched category, or a category with enough structure that it can be spectrally enriched (see, e.g. \cite{dundas_mccarthy, blumberg_mandell}). But there \textit{is} still a Verschiebung map on $\operatorname{End}(\mathcal{C})$ for any Waldhausen category. Thus, we propose the following.

\begin{conj}
  Let $\mathcal{C}$ be a Waldhausen category. The definition of $\thh(\mathcal{C})$ should be
  \[
  \THH(\mathcal{C}) := \widetilde{K}(\End(\mathcal{C}))/V^{\geq 2}
  \]
\end{conj}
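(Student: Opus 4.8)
The final statement is proposed as a \emph{definition}, so what one actually proves is a comparison theorem: whenever $\mathcal{C}$ is a Waldhausen category whose $\THH$ is already available by the classical recipe --- $\mathcal{C}=\mathrm{Perf}(A)$ for an $\mathbb{E}_1$-ring $A$, or more generally a spectral/simplicial category fed to the cyclic nerve --- the spectrum $\widetilde{K}(\End(\mathcal{C}))/V^{\geq 2}$ agrees with it $S^1$-equivariantly, and the truncation map is the topological Dennis trace. The plan is to first prove the affine case, which for discrete rings is precisely the preceding conjecture, and then propagate it to all Waldhausen categories by d\'evissage, using that both sides are additive (indeed localizing) invariants.

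For the affine case I would identify the cofiber of $V^{\geq 2}\widetilde{K}(\End(A))\to\widetilde{K}(\End(A))$ as follows. First pin down $V^{\geq 2}\End(A)$ concretely: by \cref{thm:workhorse} and the characteristic-polynomial dictionary it is the thick subcategory of endomorphisms whose characteristic polynomial is $\equiv 1\pmod{t^2}$ --- equivalently those with vanishing filtration-one trace data --- so that the quotient retains only the ``$V^1$-layer''. Then I would compute the resulting $K$-theory together with its residual $S^1$-action (available because $\widetilde{K}(\End(A))$ carries the cyclotomic structure asserted in the text, refining the Frobenius and Verschiebung) and exhibit it as the cyclic bar construction $N^{\mathrm{cy}}(A)$, i.e.\ $\THH(A)$. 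The $\pi_0$ check already succeeds: $V^{\geq 1}/V^{\geq 2}$ extracts the degree-one coefficient of $\det(1-tf)$, which is $\operatorname{tr}(f)$, matching $\pi_0\THH(A)$. The delicate point is upgrading an abstract equivalence of spectra to an $S^1$-equivariant (ideally cyclotomic) one; I expect trace-method input --- comparing $K(\End(A))$ with the relative $K$-theory of square-zero extensions and invoking the Dundas--McCarthy theorem --- to be the efficient route, and I expect this to be the main obstacle.

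To pass from $\mathrm{Perf}(A)$ to a general Waldhausen category $\mathcal{C}$, I would show that $\mathcal{C}\mapsto\widetilde{K}(\End(\mathcal{C}))/V^{\geq 2}$ is Morita invariant, takes localization (fibration) sequences of Waldhausen categories to fiber sequences, and commutes with filtered colimits --- the same formal properties $\THH$ enjoys. This reduces to checking that $\End(-)$ preserves localization sequences and Morita equivalences (it is a functor-category construction), that $K(-)$ is localizing, and that forming the $V$-filtration and the quotient $(-)/V^{\geq 2}$ is compatible with all of this. Given the natural transformation to classical $\THH$ furnished by the truncation map (matched to the Dennis trace as in the preceding conjecture), which is an equivalence on module categories over $\mathbb{E}_1$-rings by the affine case, a d\'evissage to the generating module categories finishes the argument. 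Besides the $S^1$-equivariant affine computation, the real work is that the $V$-filtration was defined through the characteristic polynomial over ordinary commutative rings; carrying it --- and the proof that each $V^{\geq k}\End(\mathcal{C})$ is Waldhausen, functorial in $\mathcal{C}$, and compatible with localization --- into the purely categorical setting of an arbitrary Waldhausen category is where most of the effort goes.
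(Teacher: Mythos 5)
This statement is a \emph{conjecture} in the paper's concluding section (\cref{sec:conjectures}), and the paper offers no proof of it: the section opens with the explicit disclaimer that ``nothing is proved in this section, we will content ourselves with sketches,'' and closes with ``we are addressing these conjectures in work in progress.'' The paper's entire argument for the statement is a heuristic one --- classical $\THH$ requires a spectral enrichment that an arbitrary Waldhausen category does not have, whereas the Verschiebung filtration on $\End(\mathcal{C})$ is available unconditionally, so the truncation $\widetilde{K}(\End(\mathcal{C}))/V^{\geq 2}$ is offered as a \emph{proposed definition} in the absence of a competing one. There is therefore no paper proof to compare your attempt against.

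That said, your reading of what would actually need to be proved is the right one: since the conjecture is a definition, what one can establish is a comparison theorem on the overlap where classical $\THH$ is defined, and your plan --- prove the affine case as an $S^1$-equivariant identification of $\widetilde{K}(\End(A))/V^{\geq 2}$ with the cyclic bar construction, leveraging the cyclotomic structure on $\widetilde{K}(\End(A))$ and Dundas--McCarthy-type trace input, then propagate by Morita invariance, localization, and filtered colimits --- is a sensible roadmap consistent with the machinery the paper points at (\cite{dundas_mccarthy, blumberg_mandell}) and with the immediately preceding conjecture (that $\widetilde{K}(\End(A))/V^{\geq 2}\simeq\THH(A)$ with the truncation matching the Dennis trace). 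You also correctly flag the two genuine obstacles: upgrading a nonequivariant equivalence to an $S^1$-equivariant (ideally cyclotomic) one, and transporting the $V$-filtration --- which the paper defines via the characteristic polynomial, a tool available only over commutative rings --- into the purely categorical setting. Neither of these is addressed in the paper, and both are substantial; but as a blind attempt at a sketch for an open problem, nothing in your proposal looks like a wrong turn.
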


We are addressing these conjectures in work in progress.

\bibliographystyle{plain}
\bibliography{witt_bib}

\begin{thebibliography}{10}

\bibitem{almkvist1}
Gert Almkvist.
\newblock Endomorphisms of finitely generated projective modules over a
  commutative ring.
\newblock {\em Ark. Mat.}, 11:263--301, 1973.

\bibitem{angeltveit}
Vigleik Angeltveit.
\newblock The norm map of {W}itt vectors.
\newblock {\em C. R. Math. Acad. Sci. Paris}, 353(5):381--386, 2015.

\bibitem{antieau_nikolaus}
Benjamin Antieau and Thomas Nikolaus.
\newblock Cartier modules and cyclotomic spectra, 2018.

\bibitem{blumberg_mandell}
Andrew~J. Blumberg and Michael~A. Mandell.
\newblock Localization for thh(ku) and the topological hochschild and cyclic
  homology of waldhausen categories, 2011.

\bibitem{cuntz_deninger}
Joachim Cuntz and Christopher Deninger.
\newblock Witt vector rings and the relative de {R}ham {W}itt complex.
\newblock {\em J. Algebra}, 440:545--593, 2015.
\newblock With an appendix by Umberto Zannier.

\bibitem{dold_iterated}
A.~Dold.
\newblock Fixed point indices of iterated maps.
\newblock {\em Invent. Math.}, 74(3):419--435, 1983.

\bibitem{DS1}
Andreas W.~M. Dress and Christian Siebeneicher.
\newblock The {B}urnside ring of profinite groups and the {W}itt vector
  construction.
\newblock {\em Adv. in Math.}, 70(1):87--132, 1988.

\bibitem{DS2}
Andreas W.~M. Dress and Christian Siebeneicher.
\newblock The {B}urnside ring of the infinite cyclic group and its relations to
  the necklace algebra, {$\lambda$}-rings, and the universal ring of {W}itt
  vectors.
\newblock {\em Adv. Math.}, 78(1):1--41, 1989.

\bibitem{dundas_mccarthy}
Bj\o rn~Ian Dundas and Randy McCarthy.
\newblock Topological {H}ochschild homology of ring functors and exact
  categories.
\newblock {\em J. Pure Appl. Algebra}, 109(3):231--294, 1996.

\bibitem{grayson_endo}
Daniel~R. Grayson.
\newblock The {$K$}-theory of endomorphisms.
\newblock {\em J. Algebra}, 48(2):439--446, 1977.

\bibitem{grayson_witt}
Daniel~R. Grayson.
\newblock Grothendieck rings and {W}itt vectors.
\newblock {\em Comm. Algebra}, 6(3):249--255, 1978.

\bibitem{hazewinkel}
Michiel Hazewinkel.
\newblock Witt vectors. {I}.
\newblock In {\em Handbook of algebra. {V}ol. 6}, volume~6 of {\em Handb.
  Algebr.}, pages 319--472. Elsevier/North-Holland, Amsterdam, 2009.

\bibitem{hesselholt_witt}
Lars Hesselholt.
\newblock Lecture notes on witt vectors.

\bibitem{hesselholt_noncommutative}
Lars Hesselholt.
\newblock Witt vectors of non-commutative rings and topological cyclic
  homology.
\newblock {\em Acta Math.}, 178(1):109--141, 1997.

\bibitem{hesselholt_big_de_rham}
Lars Hesselholt.
\newblock The big de {R}ham-{W}itt complex.
\newblock {\em Acta Math.}, 214(1):135--207, 2015.

\bibitem{hesselholt_madsen}
Lars Hesselholt and Ib~Madsen.
\newblock On the {$K$}-theory of finite algebras over {W}itt vectors of perfect
  fields.
\newblock {\em Topology}, 36(1):29--101, 1997.

\bibitem{quillen}
Daniel Quillen.
\newblock Higher algebraic {$K$}-theory. {I}.
\newblock In {\em Algebraic {$K$}-theory, {I}: {H}igher {$K$}-theories ({P}roc.
  {C}onf., {B}attelle {M}emorial {I}nst., {S}eattle, {W}ash., 1972)}, pages
  85--147. Lecture Notes in Math., Vol. 341, 1973.

\bibitem{waldhausen}
Friedhelm Waldhausen.
\newblock Algebraic {$K$}-theory of spaces.
\newblock In {\em Algebraic and geometric topology ({N}ew {B}runswick,
  {N}.{J}., 1983)}, volume 1126 of {\em Lecture Notes in Math.}, pages
  318--419. Springer, Berlin, 1985.

\end{thebibliography}

\end{document}